\newtheorem{theorem}{Theorem}[section]
\newtheorem{THM}{Theorem}
\newtheorem{lemma}[theorem]{Lemma}
\newtheorem{prop}[theorem]{Proposition}
\theoremstyle{definition}
\newtheorem{definition}[theorem]{Definition}
\theoremstyle{remark}
\newtheorem{remark}[theorem]{Remark}
\numberwithin{equation}{section}
\newcommand{\lcr}{\raisebox{-5pt}{\mbox{}\hspace{1pt}
                 \includegraphics{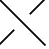}\hspace{1pt}\mbox{}}}
\newcommand{\ift}{\raisebox{-5pt}{\mbox{}\hspace{1pt}
                 \includegraphics{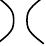}\hspace{1pt}\mbox{}}}
\newcommand{\zer}{\raisebox{-5pt}{\mbox{}\hspace{1pt}
                 \includegraphics{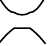}\hspace{1pt}\mbox{}}}
\newcommand{\no}[1]{}
\newcommand{\BC}{\mathbb C}
\newcommand{\BZ}{\mathbb Z}
\newcommand{\BN}{\mathbb N}
\newcommand{\bk}{\mathbf k}
\newcommand{\cB}{\mathcal B}
\newcommand{\hF}{\hat F}
\newcommand{\KeF}{K_\epsilon(F)}
\newcommand{\KF}{K_\zeta(F)}
\newcommand{\ZF}{Z_\zeta(F)}
\newcommand{\cD}{\mathcal{D}}
\newcommand{\br}{{\mathbf r}}
\newcommand{\pr}{\mathrm{pr}}
\newcommand{\Ir}{\mathrm{Irrep}}
\newcommand{\Hom}{\mathrm{Hom}}
\newcommand{\HA}{\mathrm{Hom}_{k-\mathrm{Alg}}}
\newcommand{\MS}{\mathrm{Max\ Spec}}
\newcommand{\fm}{\mathfrak m}
\newcommand{\bZR}{\overline{Z(R)}}
\newcommand{\ord}{\mathrm{ord}}
\newcommand{\cA}{\mathcal A}
\newcommand{\gr}{\mathrm{gr}}
\newcommand{\Gr}{\mathrm Gr}
 \newcommand{\I}{\mathrm{Ind}}
 \newcommand{\cP}{\mathcal P}
 \newcommand{\bn}{\mathbf n}
 \newcommand{\bt}{ {\mathbf t}}
 \newcommand{\cC}{\mathcal C}
\newcommand{\cV}{\mathcal V}
 \newcommand{\IT}{\mathrm{Ind}^\Delta}
 \newcommand{\Bt}{\cB^{\Delta}}
\newcommand{\di}{\boldsymbol{\delta}_i}
\newcommand{\al}{\alpha}
\def\Irrep{\mathrm{Irrep}}
\def\be{ \begin{equation}}
\def\ee { \end{equation}}
\def\onto{\twoheadrightarrow}
\def\CXe{\BC\la X^\Delta \ra_\epsilon}
\def\CXz{\BC\la X^\Delta \ra_\zeta}
\def\BR{{\mathbb R}}
\def\bx{\mathbf x}
 \def\MF{\mathcal{MF}}
 \def\PMF{\mathcal {PMF}}
 \def\Rzero{ \BR_+ }
  \def\by {\mathbf y}
  \def\pc{\mathrm{pc}}
\newcommand\FIGc[3]{\begin{figure}[htpb]
    \includegraphics[height=#3]{#1.pdf}
    \caption{#2}
    \label{fig:#1}
    \end{figure}}
\author{Charles Frohman}
\email{charles-frohman@uiowa.edu}
\author{Joanna Kania-Bartoszynska}
\email{jkaniaba@nsf.gov}
\author{ Thang L\^{e}}
\email{letu@math.gatech.edu}
\title{Unicity for Representations of the Kauffman bracket Skein Algebra}
\begin{document}
\begin{abstract} This paper resolves the unicity conjecture of Bonahon and Wong for the Kauffman bracket skein algebras of all oriented finite type surfaces at all roots of unity. The proof is a consequence of a general unicity theorem that says that the irreducible representations of a prime affine $k$-algebra over an algebraically closed field $k$, that is finitely generated as a module over its center, are generically classified by their central characters.  The center of the Kauffman bracket skein algebra of any orientable surface  at any root of unity is characterized,  and it is proved that the skein algebra is finitely generated as a module over its center. It is shown that for any orientable surface the center of the skein algebra at any root of unity is the coordinate ring of an  affine algebraic variety.
\end{abstract}
\maketitle

\section{Introduction}

Quantum hyperbolic geometry \cite{BB, K}  has the goal of building a quantum field theory that assigns numerical invariants to a three manifold $M$ equipped with a trace equivalence class of representations $\rho:\pi_1(M)\rightarrow PSL_2\mathbb{C}$, and perhaps with some homological data associated to cusps.
One approach to this is via quantum Teichm\"{u}ller theory \cite{K, BL, CF}. Deep work of Bonahon and Wong \cite{BW0} shows that the mechanism of quantum Teichm\"{u}ller theory is essentially describing the Kauffman bracket skein algebra of a finite type surface at roots of unity.  The next step is to understand the irreducible representations of those algebras  \cite{BW1,BW2,BW3}.  Bonahon and Wong associate to each irreducible representation of the skein algebra at roots of unity (with a mild restriction on the order) a {\em classical shadow}, which is determined by its 
central character.  They conjecture that there is a generic family of  classical shadows for which there is a unique irreducible representation of the skein algebra realizing that classical shadow \cite{BW3}. We call this the {\em unicity conjecture}.

In this paper we resolve the unicity conjecture for irreducible representations of the Kauffman bracket skein algebras of all finite type surfaces at all roots of unity. The proof proceeds by studying the irreducible representations of a localization of the skein algebra that is Azumaya. Azumaya algebras are central to the study of noncommutative algebraic geometry \cite{O}. The irreducible representations of an Azumaya algebra correspond exactly to the maximal ideals of its center.
We actually prove a more general result, Theorem \ref{THM.unicity} below. Recall that if $R$ is a $k$-algebra, where $k$ is an algebraically closed field, then every finite-dimensional representation $\rho$ of $R$ gives rise to a central character $\chi_\rho$, which is a $k$-algebra homomorphism from the center of $R$ to $k$.  Denote the center of $R$ by $Z(R)$. Let $\Irrep(R)$ be the set of all equivalence classes of finite-dimensional representations of $R$, and $\HA(Z(R), k)$ be the set of all $k$-algebra homomorphisms from $Z(R)$ to $k$, then we have the central character map
\be 
\label{eq.char}
\chi: \Irrep(R) \to \HA(Z(R), k), \quad \rho \to \chi_\rho.
\ee 

\def\MaxSpec{\mathrm{MaxSpec}}
\begin{THM}  [Unicity Theorem]
\label{THM.unicity}
Let $R$ be a prime  affine  
$k$-algebra, where $k$ is an algebraically closed field and $R$ is generated  as a module over its center $Z(R)$ by a finite set of $r$ elements.

(a) The central character map $\chi:\Irrep(R) \to \HA(Z(R),k)$ is surjective.

(b) There exists a  Zariski open and dense subset $V$ of $\ \HA(Z(R),k)$ such that if $\tau\in V$  then $\chi^{-1}(\tau)$ has one element, i.e. $\tau$ is the central character of a unique (up to equivalence) irreducible representation $\rho_\tau$. Moreover all  representations $\rho_\tau$ with $\tau\in V$ have the same dimension $N$, equal to the square root of the rank of $R$ over $Z(R)$. On the other hand if $\tau$ is not in $V$ then  it is the central character of at most $r$ non-equivalent irreducible representations, and each has dimension $\le N$.
\end{THM}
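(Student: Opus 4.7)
The plan is to deduce both parts from the classical structure theory of prime PI-algebras, since the hypothesis that $R$ is a finite module over $Z(R)$ forces $R$ to satisfy a polynomial identity. First, I would set up the numerical invariant $N$: because $R$ is prime, $Z(R)$ is a commutative domain, $R$ is torsion-free over $Z(R)$, and Posner's theorem identifies $Q(R):=R\otimes_{Z(R)}K$, where $K:=\mathrm{Frac}(Z(R))$, as a central simple $K$-algebra of dimension $N^2$, with $N$ the PI-degree; this $N$ is the one in the statement, and $R$ satisfies the standard identity $s_{2N}$ by Amitsur--Levitzki. For the surjectivity of $\chi$ in part (a), I would observe that for each $\tau\in\HA(Z(R),k)$ with kernel $\mathfrak{m}_\tau$, the quotient $R/\mathfrak{m}_\tau R$ is a nonzero $k$-algebra of $k$-dimension at most $r$ (nonzero by Nakayama applied to the finitely generated $Z(R)_{\mathfrak{m}_\tau}$-module $R_{\mathfrak{m}_\tau}$, which contains the nonzero local ring $Z(R)_{\mathfrak{m}_\tau}$), and that its simple modules are precisely the irreducible $R$-modules with central character $\tau$.

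Next, I would take $V$ to be the Azumaya locus of $R$ over $Z(R)$. Concretely, I would use the $Z(R)$-valued trace form coming from the regular representation $R\to\mathrm{End}_{Z(R)}(R)$, let $\mathfrak{d}\subset Z(R)$ be its discriminant ideal, and set $V=\HA(Z(R),k)\setminus V(\mathfrak{d})$. This $V$ is Zariski open by construction, nonempty because after passing to the generic point the pairing becomes the nondegenerate reduced-trace pairing of the central simple $K$-algebra $Q(R)$ (so $\mathfrak{d}\neq 0$), and dense because $Z(R)$ is a domain. For $\tau\in V$, $R_{\mathfrak{m}_\tau}$ is Azumaya over $Z(R)_{\mathfrak{m}_\tau}$, so $R/\mathfrak{m}_\tau R$ is a central simple $k$-algebra of dimension $N^2$; since $k$ is algebraically closed, $R/\mathfrak{m}_\tau R\cong M_N(k)$, which has a unique simple module of dimension $N$. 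This settles the main clause of part (b).

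Finally, for $\tau\notin V$, Wedderburn--Artin applied to the finite-dimensional $k$-algebra $R/\mathfrak{m}_\tau R$ yields $(R/\mathfrak{m}_\tau R)/\mathrm{Jac}\cong \prod_{i=1}^{s} M_{n_i}(k)$ with $\sum_{i=1}^{s} n_i^2\le \dim_k(R/\mathfrak{m}_\tau R)\le r$, so the number of non-equivalent irreducibles with central character $\tau$ is $s\le r$. Each surjection $R\twoheadrightarrow M_{n_i}(k)$ shows that $M_{n_i}(k)$ inherits $s_{2N}$, so Amitsur--Levitzki forces $n_i\le N$, giving the off-$V$ part of (b). The step I expect to require the most care is the bookkeeping around the trace form: the reduced trace on $Q(R)$ a priori lands only in the integral closure of $Z(R)$, so one must either use the regular-representation trace as above or invoke the Artin--Procesi characterization of the Azumaya locus via characteristic polynomial invariants, in order to guarantee that $\mathfrak{d}$ actually sits in $Z(R)$ and that its vanishing locus coincides with the non-Azumaya locus.
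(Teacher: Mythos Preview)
Your proposal is essentially correct and hits all the same structural beats as the paper's proof, but with several local substitutions that are worth flagging.

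For surjectivity in (a), the paper argues via integrality of $R$ over $Z(R)$ and the ``lying over'' theorem to produce a prime ideal $I\lhd R$ with $I\cap Z(R)=\mathfrak m_\tau$, then applies Wedderburn to $R/I$. Your Nakayama argument is shorter and equally valid: $R$ is torsion-free over the domain $Z(R)$ (primeness of $R$ gives this), so $R_{\mathfrak m_\tau}\neq 0$, and Nakayama forces $R/\mathfrak m_\tau R\neq 0$; any simple module of this finite-dimensional $k$-algebra furnishes the required irreducible. For the dimension bound $n_i\le N$, the paper quotes a general PI-degree inequality for prime quotients, while your Amitsur--Levitzki argument (transport $s_{2N}$ to $M_{n_i}(k)$) is a perfectly good concrete substitute.

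The one place where your route genuinely diverges, and where you correctly anticipate trouble, is the construction of the open set $V$. The paper does \emph{not} use a discriminant ideal. Instead it invokes the Artin--Procesi theorem directly: the Razmyslov central polynomial $g_N$ is multilinear, so by clearing denominators one finds a tuple in $R$ on which $g_N$ is a nonzero element $c\in Z(R)$; then $g_N(R_c)R_c=R_c$, hence $R_c$ is Azumaya of rank $N^2$, and one takes $V=V_c$. Your discriminant approach is problematic exactly where you say: the regular-representation trace $R\to\mathrm{End}_{Z(R)}(R)$ is only well-defined when $R$ is locally free over $Z(R)$, which is not given here, and the reduced trace on $Q(R)$ need not land in $Z(R)$. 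One can salvage it by first passing to a principal open where $R$ becomes free (generic freeness over the domain $Z(R)$) and then intersecting with the nonvanishing locus of the discriminant computed there, but at that point you are doing more work than the paper's one-line Artin--Procesi appeal. Since you already name Artin--Procesi as the fallback, the cleanest fix is simply to adopt it as the primary argument for producing $V$.
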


Some explanations are in order here, for details see Section \ref{unicitytheorem}. Since $R$ is prime, its center $Z(R)$ is a commutative integral domain. The rank of $R$ over $Z(R)$ is defined to be the dimension of the $Q$-vector space $R \otimes_{Z(R)} Q$, where $Q$ is the field of fractions of $Z(R)$. 
By  the Artin-Tate lemma (Lemma \ref{artintate}), $Z(R)$ is an affine commutative $k$-algebra. Hence by the weak Nullstellensatz \cite{AM}, the set $\HA(Z(R),k)$ can be identified with  $\MaxSpec(Z(R))$, the set of all maximal ideals of $Z(R)$, which is an affine algebraic set (over $k$) whose ring of regular functions is $Z(R)$. The Zariski topology of $\Hom_{k-Alg}(Z(R),k)$ in Theorem \ref{THM.unicity} is the Zariski topology of $\MaxSpec(Z(R))$. 
Since  $Z(R)$ is a domain, $\MaxSpec(Z(R))$ is an irreducible affine algebraic set. 

Theorem \ref{THM.unicity}, which is proved in Section \ref{unicitytheorem}, is an easy extension of  results from the study of polynomial identity rings. For instance,  it follows from the theorems in  \cite{GB}  used to analyze the representation theory of quantum groups at roots of unity. We have not found it stated explicitly in the literature, and it is useful for studying the representations of algebras underlying quantum invariants so we take a section to describe the concepts behind this theorem and sketch its proof.

\def\cX{\mathscr X}
\def\cY{\mathscr Y}
For an oriented finite type surface $F$ and a nonzero complex number  $\zeta$, let $\KF$ be the Kauffman bracket skein algebra of $F$ at $\zeta$,  and $\ZF$ be its center, see Section \ref{review}. When $\zeta$ is a root of 1 whose order is not a multiple of 4, Bonahon and Wong \cite{BW1} associate to each irreducible representation of $\KF$ an object called the {\bf classical shadow}, which is roughly the following.
Suppose first $F$ is a closed surface and the order of $\zeta$ is 2 mod 4. Bonahon and Wong   constructed an embedding of $\BC[\cX]$, the coordinate ring of the $SL_2(\BC)$-character variety $\cX$ of $F$, into the center $Z_\zeta(F)$ of the skein algebra $K_\zeta(F)$. For every irreducible representation of $\KF$, the restriction of its central character on $\BC[\cX]$ determines a point of $\cX$, i.e. an $SL_2(\BC)$-character, called the  classical shadow of the representation. When $F$ has punctures, the classical shadow can be defined similarly, and consists of  an $SL_2(\BC)$-character and other data coming from the punctures. We also extend the definition of the classical shadow to the case when $\zeta$ is an arbitrary root of 1. The set of possible classical shadows form an affine algebraic variety $\cY$, called the variety of classical shadows, 
which is closely related to the character variety $\cX$, see  Section \ref{finite}. For example, if $F$ is closed and $\zeta$ is a root of 1 of order equal to $2 \mod 4$, then $\cY=\cX$, which does not depend on $\zeta$. 

We will use Theorem \ref{THM.unicity} to prove the unicity conjecture of Bonahon and Wong:

\begin{THM}[Unicity Theorem for skein algebras]  \label{THM.2}
Let 
$F$ be an oriented finite type surface and $\zeta$ be 
a 
 root of unity.  There  
is a Zariski open dense subset $U$ of the variety of classical shadows $\cY$ 
 such that  each point of $U$ is the classical shadow of a unique (up to equivalence) irreducible representation of $\KF$.   All irreducible representations with classical shadows in $U$ have the same dimension $N$ which is  equal to the  square root of the rank of           
$\KF$ over $\ZF$. If a classical shadow is not in $U$, then it has at most $r$ non-equivalent irreducible representations, and each has dimension $\le N$. Here $r$ is a constant depending on the surface $F$ and the root $\zeta$.
\end{THM}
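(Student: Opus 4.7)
The plan is to apply the general Unicity Theorem (Theorem \ref{THM.unicity}) to $R = \KF$ over $k = \BC$. Three hypotheses must be verified for the skein algebra: $\KF$ is prime, it is affine as a $\BC$-algebra, and it is finitely generated as a module over its center $\ZF$. Primeness follows from the existence of a filtration on $\KF$ whose associated graded is a quantum torus, which is a domain. Affineness is classical and follows from the Przytycki--Sikora type basis of simple diagrams together with the Kauffman skein relations. The deeper structural input, announced in the abstract and established later in the paper, is the explicit characterization of $\ZF$ as the coordinate ring of an affine algebraic variety together with the proof that $\KF$ is finitely generated over $\ZF$; for this a large central subalgebra will be built from the image of the Chebyshev--Frobenius homomorphism of Bonahon--Wong \cite{BW1} together with central elements coming from peripheral curves around the punctures, and the finite rank will be computed on the associated graded quantum torus.

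Once these structural facts are in hand, Theorem \ref{THM.unicity} applies directly and produces a Zariski open dense subset $V$ of $\HA(\ZF, \BC) = \MaxSpec(\ZF)$ with the following properties: every $\tau \in V$ is the central character of a unique irreducible representation of $\KF$, all such representations share a common dimension $N$ equal to the square root of the rank of $\KF$ over $\ZF$, and every central character outside $V$ has at most $r$ non-equivalent irreducible preimages, each of dimension at most $N$, where $r$ is the number of generators of $\KF$ as a $\ZF$-module. What remains is to transport this conclusion from $\MaxSpec(\ZF)$ to the variety of classical shadows $\cY$. By the construction reviewed in Section \ref{finite}, the classical shadow of an irreducible representation $\rho$ is the datum extracted from the restriction of $\chi_\rho$ to the Bonahon--Wong central subring of $\ZF$, augmented by the puncture information. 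Once the paper's characterization of $\ZF$ confirms that this combined data generates the whole center, the classical shadow map coincides with the central character map under the natural identification $\cY \cong \MaxSpec(\ZF)$, and the open dense subset $U \subset \cY$ demanded by Theorem \ref{THM.2} is simply the image of $V$ under this identification.

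The main obstacle is not the deduction itself, which is essentially formal once Theorem \ref{THM.unicity} and the structural inputs are available, but rather the uniform production of those inputs across all oriented finite type surfaces $F$ and all roots of unity $\zeta$. Roots of unity whose order is divisible by $4$ lie outside the scope of the Bonahon--Wong Chebyshev--Frobenius homomorphism as originally constructed, so additional central elements must be found there; surfaces with punctures contribute further peripheral data that must be incorporated into the description of the center, and one must verify that the Bonahon--Wong subring together with this peripheral data genuinely exhausts $\ZF$. Computing the rank of $\KF$ over $\ZF$ and checking that it is a perfect square, so that $N$ is an integer, are the last essential ingredients; both are most transparent on the quantum-torus associated graded, where the index of the central sublattice can be read off directly.
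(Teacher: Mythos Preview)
Your approach is essentially that of the paper: verify that $\KF$ is prime, affine over $\BC$, and finitely generated as a $\ZF$-module, then invoke Theorem~\ref{THM.unicity} and identify $\cY$ with $\MaxSpec(\ZF)$ so that classical shadows coincide with central characters. Two minor corrections are worth noting: the quantum-torus filtration argument for primeness only covers triangulable (hence punctured) surfaces, and for closed surfaces the paper instead appeals to \cite{PS2} (Theorem~\ref{thm.zero}); and there is no separate ``essential ingredient'' of checking that the rank of $\KF$ over $\ZF$ is a perfect square, since this is automatic for any prime PI ring via Posner's theorem (the localization at $Z(R)\setminus\{0\}$ is central simple, hence has square dimension over its center).
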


Let us look again at the case when $F$ is closed and the order of $\zeta$ is 2 mod 4. If the ring $\BC[\cX]$ of $SL_2(\BC)$-characters is strictly less than the center $\ZF$, then it is easy to show that one cannot have the uniqueness in Theorem \ref{THM.2}. Hence, in order to prove the unicity theorem, we need to show that $\BC[\cX]=\ZF$ in this case.  
A large part of our paper is to do exactly this, i.e. to prove that the center $Z_\zeta(F)$ is the {\em expected one}. For example, if $F$ is of finite type and the order of $\zeta$ is 2 mod 4, then we show that the center $\ZF$ is generated by $\BC[\cX]$ and the  skeins surrounding the punctures. A similar statement, though more complicated, is true for other roots of unity, see Theorem \ref{thm.center0}. After that, Theorem \ref{THM.2} follows fairly easily from Theorem \ref{THM.unicity}, see Section \ref{irreps}.  We also  give a formula for the dimension of the generic irreducible representations of $K_{\zeta}(F)$ (i.e. the number $N$ of Theorem \ref{THM.2}) when $F$ has at least one puncture, and the order of $\zeta$ is $\neq 0 \mod{4}$. In a future publication we will give a formula for $N$ for all cases and study the generic representations of $\KF$.

Since the Kauffman bracket skein algebra $K_{\zeta}(F)$  is a vector space with basis the simple diagrams on $F$, our proofs depend on parametrizations of the simple diagrams. In the case where $F$ has punctures, this is done via geometric intersection numbers with the edges of an ideal triangulation of $F$. If $F$ is closed we use Dehn-Thurston coordinates. These depend on a choice of a pants decomposition $\cP$ of the surface, and a dual graph $\cD$ embedded into the surface. In order to characterize the center of the Kauffman bracket skein algebra of a closed surface we prove Theorem \ref{twistedpants}, which says that given a finite collection of simple diagrams $\mathcal{S}$ on $F$ it is always possible to find $(\cP,\cD)$ so that the Dehn coordinates of all the diagrams in $\mathcal{S}$ are {\em triangular} in the sense that if $P_i,P_j,P_k$ are simple closed curves in $\cP$ that bound a pair of pants, then the geometric intersection numbers of any $S\in\mathcal{S}$ with $P_i,P_j$ and $P_k$ satisfy all triangle inequalities, and there are no curves in $\mathcal{S}$  that are isotopic to a curve in $\cP$.
                                  
\begin{remark}
 N. Takenov \cite{Takenov} had proved the unicity theorem for small surfaces, namely  spheres with  at most four punctures and  tori with at most one puncture, by using explicit presentations of the skein algebras of these surfaces. 
\end{remark}

The paper is organized as follows. We start  in Section \ref{unicitytheorem} by summarizing  results about  Azumaya algebras that lead to the proof of the unicity theorem. Section \ref{review}  reviews known facts and proves additional results about the Kauffman bracket skein algebra of a finite type surface and methods for parametrizing simple diagrams on a surface. 
In Section \ref{center} we characterize the center of the Kauffman bracket skein algebra of any finite type surface  at any  root of unity.
In Section \ref{finite} we prove that for any finite type surface, at any root of unity, the Kauffman bracket skein algebra is finitely generated as a module over its center, and describe the variety of classical shadows.  
In Section \ref{irreps} we prove Theorem \ref{THM.2} and give a formula for the number $N$ of Theorem \ref{THM.2} when $F$ has at least one puncture, and the order of $\zeta$ is $\neq 0 \mod{4}$. 

The authors thank Vic Camillo, Mio Iovanov, Ryan Kinser, Paul Muhly, Nikolaus Vonessen and Zinovy Reichstein for their help understanding Azumaya algebras and  for reading earlier versions of the algebraic arguments in this paper, and Julien March\'e,  Adam Sikora and Daniel Douglas for discussions on skein algebras. The authors also thank the referees for the remarks that helped us improve the paper.

This material is based upon work supported by and while serving at the National Science Foundation. Any
opinion, findings, and conclusions or recommendations expressed in this material are those of the authors
and do not necessarily reflect the views of the National Science Foundation.

\section {The Unicity Theorem}\label{unicitytheorem}
The goal of this section is to  prove the unicity theorem (Theorem \ref{THM.unicity}).  This is done by showing that a prime algebra which is finitely generated as a module over its center is Azumaya after appropriate localization.  We start with a review of some ring theory found in \cite{A,GS,MR}, and by summarizing relevant  definitions and results.
\subsection{Basic definitions}
Throughout this paper rings are assumed to be associative and have a unit, and  
ring homomorphisms  preserve the unit, unless otherwise stated. For a subset $X$ of a ring $R$, define
$ XR$ to be the right ideal generated by $X$. For a commutative ring $C$ let $M_n(C)$ denote the ring of $n\times n$ matrices with entries in $C$. By an {\bf ideal} we  mean a 2-sided ideal unless otherwise explicitly stated.

The ring $R$ is {\bf prime} if all  $a,b\in R$ satisfy the following condition: if  $arb=0$ for all $r\in R$ then $a=0$ or $b=0$. 
We say $R$ has {\bf no zero divisors} if for every $a,b\in R$, $ab=0$ implies $a=0$ or $b=0$. Clearly, if $R$ has no zero divisors then it is prime.
If $R$ is commutative and has no zero divisors it is an {\bf integral domain}. If $D$ is an integral domain, then for any positive integer $n$, the ring $M_n(D)$ of $n\times n$-matrices with coefficients in $D$ is prime. Prime rings are the noncommutative analogs of integral domains.  An ideal $I$ of $R$ is {\bf prime} if $R/I$ is a prime ring.

Denote the {\bf center} of the ring $R$  by $Z(R)$,
\begin{equation*}Z(R)=\{z\in R \mid \forall r \in R, \ rz=zr\}.\end{equation*}
If $R$ is a prime ring, then its center $Z(R)$ is 
 an integral domain.  Recall that a ring $R$ is an {\bf affine} $C$-algebra, where $C$ is a commutative ring, if $C\leq Z(R)$ and $R$  is finitely generated as an algebra over $C$.  We use the following  form of the Artin-Tate lemma that can be found in \cite[13.9.10]{MR}.
\begin{lemma} \label{artintate} Suppose a subring $C$ of the center $Z(R)$ is Noetherian. Assume that $R$ is both a finitely generated module over $Z(R)$ and an affine $C$-algebra.  Then $Z(R)$ is  also an affine $C$-algebra.

\end{lemma}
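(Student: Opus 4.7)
The plan is to find an intermediate commutative ring $C_0$ with $C \subseteq C_0 \subseteq Z(R)$ such that $C_0$ is a finitely generated $C$-algebra (hence Noetherian, by Hilbert's basis theorem, since $C$ is Noetherian) and $R$ is a finitely generated $C_0$-module. Once that is in place, the Noetherianness of $C_0$ will transfer to $R$ as a $C_0$-module, and then $Z(R)$, being a $C_0$-submodule of $R$, will automatically be finitely generated over $C_0$, and therefore affine over $C$.

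To construct $C_0$, first fix a finite set of $C$-algebra generators $r_1,\dots,r_n$ of $R$, and a finite set of $Z(R)$-module generators $s_1,\dots,s_m$ of $R$ (with $s_1=1$, say). Then each product $s_is_j$ and each generator $r_i$ can be written as a $Z(R)$-linear combination of the $s_k$:
\begin{equation*}
s_is_j=\sum_k c_{ijk}s_k, \qquad r_i=\sum_k b_{ik}s_k,
\end{equation*}
with $c_{ijk},b_{ik}\in Z(R)$. Let $C_0$ be the $C$-subalgebra of $Z(R)$ generated by the finite collection $\{c_{ijk}\}\cup\{b_{ik}\}$. Since $C$ is Noetherian, $C_0$ is Noetherian.

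I claim $R=C_0 s_1+\cdots+C_0 s_m$. Every element of $R$ is a $C$-polynomial in the $r_i$; substituting $r_i=\sum_k b_{ik}s_k$ rewrites it as a $C_0$-linear combination of monomials in the $s_k$. Because $c_{ijk}\in C_0\subseteq Z(R)$ commutes with every $s_\ell$, iterated application of $s_is_j=\sum_k c_{ijk}s_k$ reduces any such monomial to a $C_0$-linear combination of $s_1,\dots,s_m$, proving the claim.

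Thus $R$ is a finitely generated module over the Noetherian ring $C_0$, so $R$ itself is a Noetherian $C_0$-module and every $C_0$-submodule of $R$ is finitely generated. In particular $Z(R)$ is a $C_0$-submodule of $R$, so it admits a finite set of $C_0$-module generators $z_1,\dots,z_p$. These $z_j$, together with the finitely many $C$-algebra generators of $C_0$, form a finite set of $C$-algebra generators for $Z(R)$, completing the proof. The only place where care is required is the reduction step: one must use that the structure constants $c_{ijk}$ lie in the center to move them freely past the $s_\ell$ when collapsing long monomials, which is what forces the inclusion $C_0\subseteq Z(R)$ rather than merely $C_0\subseteq R$.
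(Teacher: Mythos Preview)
Your argument is correct and is precisely the classical Artin--Tate proof: build the intermediate finitely generated $C$-algebra $C_0\subseteq Z(R)$ from the structure constants, use Hilbert's basis theorem to make $C_0$ Noetherian, conclude $R$ is a Noetherian $C_0$-module, and extract $Z(R)$ as a finitely generated submodule. The paper does not supply its own proof of this lemma but simply cites \cite[13.9.10]{MR}, so there is nothing to compare; your write-up is exactly the standard argument found there.
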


If $S$ is a multiplicatively closed subset  of $Z(R)$ that does not contain $0$ then we define the {\bf localization} of $R$ at $S$, denoted $S^{-1}R$, by
$$ S^{-1}R =  S^{-1} Z(R) \otimes _{Z(R)} R,$$
where $S^{-1} Z(R)$ is the standard localization of the commutative ring $Z(R)$ at $S$.   
If $S$ consists of powers of a single element $c\in Z(R)$ then the localization is denoted by $R_c$. The universal property of localization says that if $f: R \to R'$ is a ring homomorphism such that $f(c)$ is invertible, then $f$ has a unique ring homomorphism  extension from $R_c$ to $R'$.

\subsection{Irreducible representations and central characters}
Suppose $R$ is a $k$-algebra where $k$ is an algebraically closed field. A $k$-{\bf representation} of $R$ is a $k$-algebra homomorphism 
$$ \rho:R\rightarrow M_n(k) $$
 for some $n>0$. The number $n$ is called the dimension of $\rho$. 
 A representation is {\bf irreducible} if the induced action on $n$-dimensional column vectors has no nontrivial invariant subspaces, or equivalently if the homomorphism $\rho$ is onto. Two representations $\rho_1,\rho_2:R\rightarrow M_n(k)$ are {\bf equivalent} if there is an invertible matrix $x\in M_n(k)$ such that $\rho_1 = x \rho_2 x^{-1}$.
 The set of all equivalence classes of finite-dimensional irreducible $k$-representations of $R$ is denoted by $\Ir_k(R)$, or $\Ir(R)$ if there is no confusion.

By Wedderburn's theorem (see \cite[Theorem 2.61]{Bresar}), if a prime $k$-algebra $R$ is  finite-dimensional as a $k$-vector space, then $R \cong M_n(k)$ for some positive integer $n$, and this isomorphism is the only (up to equivalence) irreducible $k$-representation of $R$.

If $ \rho:R\rightarrow M_n(k) $ is an irreducible $k$-representation, then $I=\ker \rho$ is a prime ideal of $R$, since $R/I=M_n(k)$ is prime. Besides, $I$  is $k$-{\bf cofinite}, i.e.   the $k$-vector space $R/I$ has finite positive dimension. 
Conversely, if $I$ is a prime $k$-cofinite ideal of $R$, then Wedderburn's theorem shows that there is an irreducible $k$-representation $\rho$ with $\ker \rho =I$.

\def\Spec{\mathrm{Spec}}

The Skolem-Noether theorem implies that two irreducible $k$-representations $\rho_1, \rho_2$ are equivalent if and only if $\ker \rho_1= \ker \rho_2$. Thus we have the following bijection
\be 
\label{eq.bijection}
\Ir_k(R) \cong \{ I \in \Spec(R) \mid I \text{ is $k$-cofinite }\}, \quad \rho \to \ker \rho.
\ee
Here $\Spec(R)$ is the set of all prime ideals of $R$.

Suppose $\rho:R\rightarrow M_n(k)$ is an irreducible $k$-representation. By Schur's lemma, if $z\in Z(R)$ is a central element then $\rho(z)$ is a scalar multiple of the identity. This implies there exists a unique $k$-algebra homomorphism $\chi_\rho:Z(R)\rightarrow k$ such that 
\begin{equation} \forall z \in Z(R),\quad  \rho(z)=\chi_\rho(z)Id_n.\end{equation}
The algebra homomorphism $\chi_\rho$ is  called  the {\bf central character} of $\rho$.

Let $\HA(R,k)$ denote the set of all $k$-algebra homomorphisms from $R$ to $k$.
Since $\chi_\rho$ only depends on the equivalence class of $\rho$  we have the central character map
\begin{equation}
\label{chi}
\chi: \Ir_k(R) \to \HA(Z(R),k), \quad \rho \mapsto \chi_\rho.
\end{equation}
In general, $\chi$ is neither injective nor surjective. We will see that there is a nice class of algebras for which $\chi$ is a bijection. For $\tau\in \HA(Z(R),k)$, the preimage of $\tau$ under $\chi$ is denoted by $\Ir_k(R,\tau)$.

\begin{prop}\label{r.tau}
 Suppose $\tau \in \HA(Z(R),k)$ with $\fm =\ker \tau$.
Let $\pr: R \to R /\fm R$ be the natural projection. There is a bijection
\be  \label{eq.6h}
f_\tau: \Ir_k(R/\fm R) \overset {\cong}\longrightarrow \Ir_k(R,\tau), \quad \rho \to \rho \circ \pr.
\ee
\end{prop}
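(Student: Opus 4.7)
The plan is to show $f_\tau$ is a well-defined bijection by verifying, in turn, well-definedness, surjectivity, and injectivity, each a direct calculation exploiting that $\pr\colon R\to R/\fm R$ is a surjective $k$-algebra homomorphism and that $\fm R$ is a two-sided ideal of $R$ (because $\fm\subseteq Z(R)$). The key conceptual point is that for representations of $R$ with fixed central character $\tau$, the ideal $\fm$ is automatically annihilated, so the representation factors through $R/\fm R$.

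For well-definedness, I would take $\rho\in\Ir_k(R/\fm R)$ and check that $\rho\circ\pr$ is an irreducible $k$-representation of $R$ with central character $\tau$. Irreducibility follows from surjectivity of $\pr$: since $\rho$ is onto $M_n(k)$, so is $\rho\circ\pr$. For the central character, for any $z\in Z(R)$, the element $z-\tau(z)\cdot 1_R$ lies in $\ker\tau=\fm$, hence in $\fm R$. Thus $\pr(z)=\tau(z)\cdot 1_{R/\fm R}$, and applying $\rho$ yields $\rho(\pr(z))=\tau(z)\,Id_n$, so $\chi_{\rho\circ\pr}=\tau$. Equivalent $\rho$'s clearly give equivalent composites, so $f_\tau$ is well-defined on equivalence classes.

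For surjectivity, take $\sigma\in\Ir_k(R,\tau)$. For every $z\in\fm$ we have $\sigma(z)=\chi_\sigma(z)\,Id_n=\tau(z)\,Id_n=0$, so $\fm\subseteq\ker\sigma$; since $\ker\sigma$ is a two-sided ideal, $\fm R\subseteq\ker\sigma$. By the universal property of the quotient, $\sigma$ factors uniquely as $\sigma=\bar\sigma\circ\pr$ for some $k$-algebra homomorphism $\bar\sigma\colon R/\fm R\to M_n(k)$, which is surjective (hence irreducible) because $\sigma$ is; this gives $f_\tau(\bar\sigma)=\sigma$. For injectivity, if $\rho_1\circ\pr$ and $\rho_2\circ\pr$ are equivalent via $x\in GL_n(k)$, then since every element of $R/\fm R$ has the form $\pr(r)$, the relation $x\rho_1(\pr(r))x^{-1}=\rho_2(\pr(r))$ for all $r\in R$ says exactly that $x\rho_1 x^{-1}=\rho_2$ on $R/\fm R$.

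There is no serious obstacle in this proposition; the only step that does real work is the observation that $\chi_\sigma=\tau$ forces $\fm\subseteq\ker\sigma$, which is what allows descent to $R/\fm R$. Everything else is formal manipulation with the surjection $\pr$.
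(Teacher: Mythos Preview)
Your proof is correct and follows the same overall structure as the paper's: verify well-definedness, surjectivity, and injectivity. The surjectivity argument is essentially identical to the paper's, and your well-definedness is actually more complete, since you explicitly verify that $\chi_{\rho\circ\pr}=\tau$ (via $z-\tau(z)1_R\in\fm$), a point the paper leaves implicit.

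The one genuine difference is in injectivity. You argue directly: if $x(\rho_1\circ\pr)x^{-1}=\rho_2\circ\pr$ on $R$, then surjectivity of $\pr$ immediately gives $x\rho_1 x^{-1}=\rho_2$ on $R/\fm R$. The paper instead passes through the Skolem--Noether correspondence \eqref{eq.bijection} between equivalence classes of irreducibles and their (prime) kernels: it shows that $\ker(\rho_i\circ\pr)=\pr^{-1}(\ker\rho_i)$, so equality of the former forces equality of the latter (since $\pr(\pr^{-1}(P))=P$), and hence equivalence of $\rho_1,\rho_2$. Your route is more elementary and self-contained; the paper's route reuses machinery already set up in the section, which fits the exposition but is not needed for this particular step.
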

\def\tP{\tilde P}
\begin{proof} 
If $\rho: R/\fm R \to M_n(k)$ is an irreducible $k$-representation of $R/\fm R$, then $\rho \circ \pr$
 is an irreducible $k$-representation of $R$ since clearly $\rho\circ \pr$ is surjective. If $\rho_1$ and $\rho_2$ are two equivalent irreducible 
 $k$-representation of $R/\fm R$, then $\rho_1 \circ \pr$ and $\rho_2 \circ \pr$ are equivalent. 
 Hence the map $f_\tau$ given by \eqref{eq.6h} is well-defined. 
 
 Note that if $P$ is a prime ideal of $R/\fm R$ then $\tP:=\pr^{-1}(P)$ is a prime ideal of $R$, since $R/\tP \cong (R/\fm R)/P$. Besides $\pr(\tP)= P$. If $\rho$ is an irreducible $k$-representation of $R/\fm R$, with $P=\ker \rho$, then $\ker (\rho \circ \pr)= \tP$.

Suppose $\rho_1, \rho_2$  are two irreducible $k$-representations of $R/\fm R$ with $f_\tau(\rho_1)=f_\tau(\rho_2)$. Let $P_i= \ker \rho_i$, then $\tP_1= \ker (\rho_1 \circ \pr) = \ker (\rho_2 \circ \pr)= \tP_2$. It follows that $P_1=P_2$, as $P=\pr(\tP)$. This shows $\rho_1$ and $\rho_2$ are equivalent, or $f_\tau$ is injective.

Now suppose $\rho$ is an irreducible $k$-representation of $R$ with $\chi_\rho=\tau$. Then $\rho=0$ on $\ker \tau= \fm$. Hence $\rho=0$ on $ \fm R$. This means $\rho$ factors through $R/\fm R$, and hence $f_\tau$ is surjective. The proposition is proved.
\end{proof}

Suppose that $Z(R)$ is an affine $k$-algebra without nilpotents, i.e. if $x\in Z(R)$ and $x^n=0$ for some positive integer $n$ then $x=0$. Let $\MS(Z(R))$ be the set of all maximal ideals of $Z(R)$. 
The weak Nullstellensatz \cite{AM}  says that  $\MS(Z(R))$  is an affine algebraic set whose ring of regular function is $Z(R)$, and the map
\begin{equation}\label{tau1}
\eta: \HA(Z(R),k) \to \MS(Z(R)),\  \eta( \phi) = \ker \phi,
\end{equation}
is a bijection. 
We can pull back the Zariski topology of $\MS(Z(R))$ to $\HA(Z(R),k)$.

\subsection{PI rings, central simple algebras, and Razmyslov polynomials}
The ring of noncommuting polynomials in variables  $x_1,\ldots,x_n$  with integer coefficients is denoted $\mathbb{Z}\langle x_1,\ldots,x_n\rangle $.  
A polynomial $p$ is {\bf linear} in the variable $x_i$ if every monomial that appears in $p$ with nonzero coefficient has total exponent $1$ in $x_i$.  A polynomial is {\bf multilinear} if it is linear in all the variables. 
 A polynomial $p$ is {\bf monic} if at least one of its terms with highest total exponent sum has coefficient $\pm 1$. For $f\in \mathbb{Z} \langle x_1,\ldots,x_n \rangle$ and an associative ring $R$ let $f(R)\subset R$ be the set
 $$ f(R)= \{ f(r_1,\ldots,r_n) \mid r_1, \dots, r_n \in R\}.$$ 
  We say that $f\in \mathbb{Z} \langle x_1,\ldots,x_n \rangle$ is a {\bf polynomial identity} for the ring $R$ if $f(R)=\{0\}$.   If  $R$ has a monic polynomial identity then $R$ is a {\bf PI ring}. For example,  if $R$ is finitely generated as a module over its center, then $R$ is a PI ring, see \cite[Corollary 13.1.13]{MR}.
  A  homogeneous polynomial $f\in \mathbb{Z} \langle x_1,\ldots,x_n \rangle$ of positive degree is a {\bf central polynomial} for the ring $R$ if $f(R)\subset Z(R)$ and $f$ is not a polynomial identity, i.e. $f(R) \neq \{0\}$.

 An ring $R$ is called a {\bf central simple algebra} 
 if it has no nontrivial two-sided ideals, its center $Z(R)$ is a field, and it is finite-dimensional over $Z(R)$. Usually $R$ is considered as an algebra over $Z(R)$, and whence the terminology.  If $R$ is a central simple algebra and  $\bZR$ denotes the algebraic closure of $Z(R)$, then there exists a positive integer $n$, called the {\bf PI degree of $R$},  such that $R \otimes _{Z(R)} \bZR\cong M_n(\bZR)$. In particular, if $Z(R)=k$ is  an algebraically closed field,  then $R$ is isomorphic to $M_n(k)$ with $n$ being the PI degree,  and this isomorphism is the only (up to equivalence) irreducible $k$-representation of $R$. It also follows that for any central simple algebra, its dimension over the center is a perfect square, namely, the square of its PI degree.
 For details, see \cite[Chapter 13]{MR}.

If $R$ is a central simple algebra of PI degree $n$, by \cite[Cor 13.6.3]{MR} the {\bf $n$th Razmyslov polynomial} $g_n$, defined in \cite{A,MR}, is a   central polynomial for $R$ of degree $2n^2+2$ that is multilinear. 

We will use the following well-known results concerning prime PI ring.
 
 \begin{theorem} \label{posner} Suppose $R$ is a prime PI ring. Let $S=Z(R)-\{0\}$.  
 
 (a) The ring
$S^{-1}R$ is a central simple algebra with center $S^{-1}Z(R)$. The PI degree of $R$ is defined to be the PI degree of $S^{-1}R$.

(b) For every prime ideal $I$ of $R$, the ring $R/I$ is a prime PI ring whose PI degree is less than or equal to that of $R$.
\end{theorem}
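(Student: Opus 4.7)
The plan is to treat part (a) as Posner's theorem and deduce it from the existence of central polynomials on prime PI rings, and then derive part (b) by combining part (a) with the inheritance of polynomial identities by quotient rings.

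For (a), I would begin with the formal reductions. Since $R$ is prime, $Z(R)$ is an integral domain, so $S=Z(R)\setminus\{0\}$ is a proper multiplicative set. Each $s\in S$ is a central non-zero-divisor in the prime ring $R$ (if $sa=0$ then $sRa=0$, whence $a=0$), so the canonical map $R\to S^{-1}R$ is injective, and a direct verification gives $Z(S^{-1}R)=S^{-1}Z(R)$, the field of fractions of $Z(R)$. The main step is to prove $S^{-1}R$ is simple. For this I would invoke the standard fact (see \cite[Chapter~13]{MR}) that every prime PI ring admits a multilinear central polynomial $f$ with $f(R)\subseteq Z(R)$ and $f(R)\ne\{0\}$. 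Given any nonzero two-sided ideal $J$ of $S^{-1}R$, the contraction $J\cap R$ is a nonzero two-sided ideal of $R$, and multilinearity of $f$ together with primeness of $R$ produces a nonzero element of $f(J\cap R,R,\ldots,R)\subseteq (J\cap R)\cap Z(R)\subseteq J\cap S$. Such an element is invertible in $S^{-1}R$, so $J=S^{-1}R$. Finite-dimensionality over the center then follows from Kaplansky's theorem on primitive PI rings, and together these conclude that $S^{-1}R$ is central simple; its PI degree is thereby well-defined and is, by definition, the PI degree of $R$.

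For (b), the quotient $R/I$ inherits every polynomial identity satisfied by $R$, so $R/I$ is PI, and $R/I$ is prime by the definition of a prime ideal. Hence part (a) applies to $R/I$ and supplies a well-defined PI degree $m$. To bound $m\le n$ where $n$ is the PI degree of $R$, I would invoke the Amitsur--Levitzki characterization that the PI degree equals the smallest positive integer $k$ for which the ring satisfies the standard identity $s_{2k}$: this rests on the isomorphism $S^{-1}R\otimes_K \bar K\cong M_n(\bar K)$, where $K=\mathrm{Frac}(Z(R))$ and $\bar K$ is its algebraic closure, together with the classical fact that $M_n(C)$ satisfies $s_{2n}$ but not $s_{2n-2}$ for any commutative domain $C$. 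Since every identity of $R$ descends to $R/I$, the analogous minimum for $R/I$ can only decrease, giving $m\le n$.

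The main obstacle is the simplicity step in (a), which is the only place where the PI hypothesis is really used in a substantive way; everything else is either formal algebra or a direct citation from the PI literature in \cite{MR}.
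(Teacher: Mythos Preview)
The paper does not give its own proof of this theorem: immediately after the statement it simply records that part (a) is Posner's theorem (citing \cite{A} and \cite[Theorem 13.6.5]{MR}) and that part (b) is \cite[Lemma 17.7.2(i)]{MR}. So there is no argument in the paper to compare against beyond these citations.

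Your sketch is a correct outline of the standard textbook proof and is consistent with what one finds in those references. For (a), the key nontrivial input is indeed Rowen's theorem that every nonzero ideal of a prime PI ring meets the center nontrivially, which follows from the existence of multilinear central polynomials; combined with Kaplansky's theorem on primitive PI rings this yields central simplicity of $S^{-1}R$. For (b), the Amitsur--Levitzki characterization of PI degree via the standard identities $s_{2k}$ is exactly the usual route to the inequality. One small point worth tightening in your write-up: the step ``multilinearity of $f$ together with primeness of $R$ produces a nonzero element of $f(J\cap R,R,\ldots,R)$'' is precisely Rowen's lemma and deserves either a citation or a line of justification (e.g.\ if $f(I,R,\ldots,R)=0$ then a short argument using primeness and multilinearity forces $f(R,\ldots,R)=0$, contradicting that $f$ is not an identity). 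With that acknowledged, your proposal is sound.
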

Part (a) is Posner's theorem,   see \cite{A} or \cite[Theorem 13.6.5]{MR}. Part (b) is \cite[Lemma 17.7.2(i)]{MR}.

\subsection{Azumaya algebras}  The concept of an Azumaya algebra  generalizes the concept of a central simple algebra to the case when the center is not necessarily  a field. 

If $R$ is a ring let  $R^o$ be the opposite ring, i.e. $R^o$ has the same underlying additive group, but multiplication is defined to be the opposite of the multiplication on $R$.  There is a ring homomorphism \begin{equation} \Psi : R\otimes_{Z(R)} R^o\rightarrow End_{Z(R)}(R)\end{equation} given
by \begin{equation}\Psi(\alpha\otimes \beta) (\gamma)=\alpha\gamma\beta.\end{equation}
A ring $R$ is an {\bf Azumaya algebra}  if $R$ is a finitely generated  projective module over
$Z(R)$ and $\Psi$ is an isomorphism. Usually one considers $R$ is an algebra over $Z(R)$, but it may happen that $Z(R)$ contains a field $k$, and we can consider both $R$ and $Z(R)$ as algebras over $k$.

An important property of Azumaya algebras, 
see \cite[Proposition 13.7.9]{MR},  is that there is a bijection between 2-sided ideals $I$ of $R$ and ideals $H$ of $Z(R)$ given by 
\begin{equation}I \to I\cap Z(R), \ \ H \to HR.
\label{eq.bij}\end{equation}

We are interested in Azumaya algebras with the property that all  irreducible $k$-representations  have the same dimension. 
Suppose $R$ is an Azumaya algebra. By \cite[Proposition 13.7.9]{MR}, if  $\fm\subset Z(R)$ is a maximal ideal, then $R/\fm R$ is a central simple algebra with center $Z(R)/\fm$. Following~\cite{MR}, we say $R$ is an {\bf Azumaya algebra of rank $n^2$} if for each maximal ideal $\fm \subset Z(R)$, the central simple algebra $R/\fm R$ has PI degree $n$.

\def\Id{\mathrm{Id}}
\begin{prop} \label{iso}
Suppose that $R$ is an Azumaya algebra of rank $n^2$. Assume that the center $Z(R)$  is an affine  $k$-algebra, where $k$ is an algebraically closed field. It follows that:

(i) The central character map $\chi: \Ir_k(R) \to \HA(Z(R),k)$ is a bijection;

(ii)  Every irreducible $k$-representation of $R$ has dimension $n$.
\end{prop}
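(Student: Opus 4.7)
The plan is to reduce Proposition \ref{iso} to the identification of the irreducible representations of a matrix algebra over $k$, using Proposition \ref{r.tau} as the main bridge between representations of $R$ with a prescribed central character and representations of the quotient $R/\fm R$. Since $Z(R)$ is an affine $k$-algebra and $k$ is algebraically closed, the weak Nullstellensatz tells us that every $\tau \in \HA(Z(R),k)$ has kernel $\fm := \ker \tau$ which is a maximal ideal of $Z(R)$, and moreover $Z(R)/\fm \cong k$. This identification is the key point that lets the Azumaya hypothesis interact cleanly with the hypothesis that $k$ is algebraically closed.

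First, I would fix an arbitrary $\tau \in \HA(Z(R),k)$ and let $\fm = \ker \tau$. By the defining property of an Azumaya algebra of rank $n^2$ (together with the statement quoted from \cite[Proposition 13.7.9]{MR} in the preceding subsection), the quotient $R/\fm R$ is a central simple algebra of PI degree $n$ whose center is $Z(R)/\fm$. Combining with the identification $Z(R)/\fm \cong k$ from the previous paragraph, $R/\fm R$ is a central simple algebra of PI degree $n$ over the algebraically closed field $k$. By the discussion preceding the bijection \eqref{eq.bijection} (namely, the consequence of Wedderburn's theorem for prime finite-dimensional $k$-algebras, or equivalently the definition of PI degree over an algebraically closed base field), we obtain a $k$-algebra isomorphism $R/\fm R \cong M_n(k)$.

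Next, I would invoke the fact, stated just after \eqref{eq.bijection} in the excerpt, that $M_n(k)$ has exactly one irreducible $k$-representation up to equivalence, namely the defining identity map, and that this representation has dimension $n$. Therefore $|\Ir_k(R/\fm R)| = 1$, and the unique class has dimension $n$. Now I would apply Proposition \ref{r.tau}, which provides a bijection
\[
f_\tau : \Ir_k(R/\fm R) \xrightarrow{\ \cong\ } \Ir_k(R,\tau), \qquad \rho \mapsto \rho \circ \pr,
\]
to conclude that $|\Ir_k(R,\tau)| = 1$. Since $\tau$ was arbitrary, every fiber of the central character map $\chi$ is a singleton; in particular $\chi$ is both injective and surjective, proving (i). Moreover, composition with the surjection $\pr : R \to R/\fm R$ does not change the dimension of a representation, so the unique representation in each fiber has dimension $n$, giving (ii).

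I do not anticipate any significant obstacle here: everything is put in place by the previously established results. The only conceptual point to verify carefully is that the matrix-algebra isomorphism $R/\fm R \cong M_n(k)$ really does follow from ``PI degree $n$'' plus ``center is the algebraically closed field $k$''; this is immediate from the definition of PI degree recalled in the subsection on central simple algebras, since $\overline{Z(R)/\fm} = k$ forces the ``algebraic closure'' step in that definition to be trivial.
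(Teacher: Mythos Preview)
Your proof is correct and follows essentially the same route as the paper: fix $\tau$, let $\fm=\ker\tau$, use the Azumaya hypothesis to identify $R/\fm R\cong M_n(k)$ via the PI-degree definition, and then invoke Proposition~\ref{r.tau} to conclude that each fiber $\Ir_k(R,\tau)$ is a singleton of dimension $n$. The only difference is that you spell out the role of the weak Nullstellensatz in obtaining $Z(R)/\fm\cong k$, which the paper leaves implicit.
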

\begin{proof}

  Suppose $\tau\in \HA(Z(R),k)$.  Then $\fm=\ker \tau$ is a maximal ideal of $Z(R)$. By Proposition \ref{r.tau}, $\Ir_k(R,\tau)= \Ir_k(R/\fm R)$. Since $R/\fm R$ is a central simple algebra having PI degree $n$ and center $Z(R)/\fm=k $, an  algebraically closed field, we have  $R/\fm R \cong M_n(k)$, and this is the only irreducible representation of $R/\fm R$. This shows $\Ir_k(R,\tau)$ has exactly one element, proving (i). Besides, the dimension of the representation in $\Ir_k(R,\tau)$ is $n$, proving (ii).
  \end{proof}

By \cite[Theorem 13.7.14]{MR}, we have the following criterion for a prime ring to be Azumaya of rank $n^2$. Recall that $g_n$ is the  $n$th Razmyslov polynomial.
\begin{theorem}[Artin-Procesi theorem] \label{ap} If $R$ is a prime ring then it is an Azumaya algebra of rank $n^2$ if and only if $g_n(R) R=R$.
\end{theorem}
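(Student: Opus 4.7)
The plan is to prove both implications using the local-global philosophy that underpins Azumaya algebras, namely that they are fully determined by their behaviour at each closed point of $\Spec(Z(R))$. The two main tools are the ideal bijection \eqref{eq.bij} between two-sided ideals of $R$ and ideals of $Z(R)$, and the defining property of the Razmyslov polynomial $g_n$, which is a multilinear central polynomial for $M_n$ of degree $2n^2+2$ that is not a polynomial identity.

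For the forward direction, suppose $R$ is Azumaya of rank $n^2$. I would first check that $g_n(R) \subseteq Z(R)$. For every maximal ideal $\fm$ of $Z(R)$ the quotient $R/\fm R$ is a central simple algebra of PI degree $n$ with center $Z(R)/\fm$, so $g_n$ takes central values modulo each $\fm R$. Since $R$ is prime and Azumaya, the intersection of the ideals $\fm R$ as $\fm$ ranges over all maximal ideals of $Z(R)$ vanishes, so the commutator $[g_n(r_1,\ldots,r_m),r]$ itself vanishes for any $r$, giving $g_n(R)\subseteq Z(R)$. Now consider the two-sided ideal $J = g_n(R)\, R$; by \eqref{eq.bij} it is determined by $J\cap Z(R) = g_n(R)\, Z(R)$. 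If $J\cap Z(R)\subseteq \fm$ for some maximal $\fm$, then $g_n$ would vanish on $R/\fm R \cong M_n(Z(R)/\fm)$, contradicting the fact that $g_n$ is not a polynomial identity for $M_n$. Thus $g_n(R)\, Z(R) = Z(R)$ and $J = R$.

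For the reverse direction, assume $R$ is prime with $g_n(R)\,R = R$. I would first show that $R$ is PI of PI degree exactly $n$: nonvanishing of $g_n$ on $R$, combined with Razmyslov's construction, forces $R$ to satisfy the standard identity $s_{2n}$, so by Posner's theorem (Theorem \ref{posner}) the localisation $S^{-1}R$ with $S=Z(R)\setminus\{0\}$ is central simple of PI degree at most $n$; nonvanishing of $g_n$ on $S^{-1}R$ then pins the PI degree at exactly $n$. The second step is to establish the Azumaya property. Write $1=\sum_i g_n(r_{i,1},\ldots,r_{i,m})\, s_i$ with each $g_n(r_{i,1},\ldots,r_{i,m})\in Z(R)$; this partition of unity in $Z(R)$ cuts $\Spec(Z(R))$ into finitely many principal open sets $D(c_\alpha)$ on each of which the multilinearity of $g_n$ manufactures an explicit trace form and matrix units, trivialising the localised $R_{c_\alpha}$ as $M_n$ over $Z(R)_{c_\alpha}$. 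Local freeness of rank $n^2$ and triviality of $\Psi$ on each patch then glue to show that $R$ is a finitely generated projective $Z(R)$-module and that $\Psi: R\otimes_{Z(R)}R^o\to \mathrm{End}_{Z(R)}(R)$ is an isomorphism.

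The main obstacle is the second step of the reverse direction, namely extracting from the identity $g_n(R)\,R = R$ an explicit splitting of $R$ as a matrix algebra on an affine open cover of $\Spec(Z(R))$. This is the heart of the Artin--Procesi construction and requires the fine internal structure of $g_n$ (the fact that it is built from an alternating pairing that encodes a reduced trace on $M_n$), not merely the formal property that it is a nonzero central polynomial for $M_n$. The passage from pointwise central simple quotients $R/\fm R\cong M_n(Z(R)/\fm)$ to a global Azumaya structure is where the argument is truly delicate, and it is this splitting construction that I would need to carry out carefully rather than sketch.
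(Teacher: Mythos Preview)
The paper does not prove Theorem~\ref{ap}; it is quoted as a standard result with the attribution ``By \cite[Theorem 13.7.14]{MR}\ldots'' and no argument is given. So there is no ``paper's own proof'' to compare against.

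Your outline follows the classical route to Artin--Procesi and is broadly sound, and you correctly identify the genuine difficulty: in the reverse implication, turning the equation $g_n(R)R=R$ into an explicit local splitting $R_{c_\alpha}\cong M_n(Z(R)_{c_\alpha})$ requires the internal structure of the Razmyslov polynomial (it encodes a reduced trace), not just its abstract status as a central polynomial. One point that deserves more care is your claim that ``nonvanishing of $g_n$ on $R$, combined with Razmyslov's construction, forces $R$ to satisfy the standard identity $s_{2n}$.'' Mere nonvanishing of $g_n$ at a single tuple does not by itself imply that $R$ is PI; the argument that $R$ is PI of degree exactly $n$ really uses the stronger hypothesis $g_n(R)R=R$ together with the specific way $g_n$ is built (so that locally one obtains matrix units, whence the identities of $M_n$), and this is intertwined with the splitting step you flagged as the main obstacle rather than a separate preliminary. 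In short, your sketch is the right shape, but the two steps you separate in the reverse direction are in practice done together via the same Razmyslov trace machinery.
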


We say that an algebra $R$ is   {\bf almost Azumaya} if there is $c\in Z(R)\setminus \{0\}$ such that  $R_c$ is an Azumaya algebra of rank $n^2$ for some integer $n>0$. 

We are unsure who to attribute the next theorem to, it appears as Corollary V.9.3 on page 73 of \cite{A} with a different proof. It also appears in \cite{R} as Corollary 6.1.36. 

\begin{theorem}\label{aa}  If $R$ is a prime algebra that is finitely generated as a module over its center then $R$ is almost Azumaya.\end{theorem}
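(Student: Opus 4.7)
The plan is to combine Posner's theorem (Theorem \ref{posner}) with the Artin-Procesi criterion (Theorem \ref{ap}) and the existence of the multilinear Razmyslov central polynomial. Since $R$ is finitely generated as a module over its center, it is a PI ring, so Posner gives a well-defined PI degree $n$ -- the PI degree of the central simple algebra $S^{-1}R$, where $S = Z(R) \setminus \{0\}$. The Artin-Procesi theorem then reduces the whole problem to finding a nonzero $c \in Z(R)$ for which $g_n(R_c) R_c = R_c$, where $g_n$ is the $n$th Razmyslov polynomial.

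The first step is to show that $g_n(R)$ is a nonzero subset of $Z(R)$. Since $g_n$ is a central polynomial for $S^{-1}R$, one has $g_n(S^{-1}R) \subset Z(S^{-1}R)$ and $g_n(S^{-1}R) \neq \{0\}$. Multilinearity of $g_n$ (in its $m = 2n^2+2$ variables) permits clearing of denominators:
\[
g_n(r_1/s_1, \ldots, r_m/s_m) = (s_1\cdots s_m)^{-1}\, g_n(r_1,\ldots,r_m),
\]
so any nonzero value of $g_n$ on $S^{-1}R$ forces a nonzero value of $g_n$ on $R$ itself, giving $g_n(R) \neq \{0\}$.

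Next I would verify the containment $g_n(R) \subset Z(R)$. For $x \in g_n(R)$ and $r \in R$, the element $x$ already lies in $Z(S^{-1}R) = S^{-1}Z(R)$, so there exists $s \in S$ with $s(xr - rx) = 0$ in $R$. A nonzero central element of a prime ring is a non-zero-divisor: if $sy = 0$, then since $s$ is central, $sRy = Rsy = 0$, and primeness of $R$ forces $y = 0$. Hence $xr = rx$ for all $r \in R$, proving $x \in Z(R)$.

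To conclude, pick any nonzero $c \in g_n(R)$; by the previous step $c \in Z(R)$. In $R_c$ the element $c$ becomes a unit, while $c \in g_n(R) \subset g_n(R_c)$, so
\[
R_c = cR_c \subset g_n(R_c)\, R_c \subset R_c.
\]
Localisation of a prime ring at powers of a central non-zero-divisor is again prime, so Theorem \ref{ap} applies to $R_c$ and yields that $R_c$ is Azumaya of rank $n^2$, finishing the proof. The delicate step is the second one: showing that the Razmyslov polynomial, \emph{a priori} only known to take values in $Z(S^{-1}R)$, in fact takes values in $Z(R)$ itself. This is the place where the primeness hypothesis on $R$ is decisive.
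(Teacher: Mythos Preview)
Your proof is correct and follows essentially the same route as the paper's: Posner's theorem to obtain the PI degree $n$, multilinearity of $g_n$ to produce a nonzero evaluation $c\in g_n(R)$, and the Artin--Procesi criterion applied to the prime localization $R_c$. The only difference is one of detail: you explicitly verify that $g_n(R)\subset Z(R)$ (using that $R\hookrightarrow S^{-1}R$ is injective because nonzero central elements of a prime ring are regular), a point the paper leaves implicit when it localizes at $c$. Your denominator-clearing formulation of the multilinearity step is equivalent to the paper's ``central linear combination'' phrasing.
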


\proof  By Posner's theorem (Theorem \ref{posner}(a)), $S^{-1}R$ is a central simple algebra, where $S= Z(R) \setminus \{0\}$. Suppose $n$ is the PI degree of $S^{-1}R$. 
As the Razmyslov polynomial $g_n$ is central, there is a tuple ${\mathbf x}$ of elements in $S^{-1}R$ such that $g_n({\mathbf x}) \neq 0$. Since $g_n$ is multilinear \cite{A,MR} over the center, $g_n({\mathbf x})$ can be written as a central linear combination of evaluations of $g_n$ on elements of $R$. Since the sum is nonzero, one of the terms gives a nonzero evaluation of $g_n$ on elements of $R$.  Let $c=g_n({\mathbf y})$ be one of the nonzero evaluations, where ${\mathbf y}$ is a tuple of elements from $R$. The ring $R_c$ is still prime. Since $c\in g_n(R_c)$ is invertible, we have $g_n(R_c) R_c= R_c$. By Theorem \ref{ap}, $R_c$ is an Azumaya algebra of rank $n^2$. 
 \qed

\subsection{Proof of the Unicity Theorem}
Suppose   $R$ is a prime  affine $k$-algebra, where $k$ is an algebraically closed field and $R$ is finitely generated as a module over $Z(R)$. By the Artin-Tate lemma (Lemma \ref{artintate}, with $C=k$),  $Z(R)$ is an affine $k$-algebra. Since $R$ is prime, $Z(R)$ is a domain. It follows that $\MS(Z(R))$ is an irreducible affine algebraic set over $k$ whose ring of regular functions is $Z(R)$. The Zariski topology of $\MS(Z(R))$  is generated by open sets of the form
$$ U_c:=\{\mathfrak{m}\in \operatorname{Max}\:\operatorname{Spec} \left({Z(R)}\right) \mid c\not \in \mathfrak{m}\}, $$
where $c \in Z(R)$. By pulling back using $\eta$ of \eqref{tau1}, the Zariski topology of $\HA(Z(R),k)$ is generated by open sets of the form
$$V_c:=\{f \in \HA(Z(R),k) ) \mid f(c) \neq 0\}, \quad c \in Z(R).
$$

We now reformulate Theorem \ref{THM.unicity} in a more precise form.

\def\embed{\hookrightarrow}
\begin{theorem}
\label{unicity} Let $k$ be an algebraically closed field and 
 $R$ be a prime affine  
$k$-algebra. Suppose $R$ is generated  as a module over its center $Z(R)$ by a finite set  of $r$ elements.

(i) Every element of $\HA(Z(R),k)$ is the central character of at least one irreducible $k$-representation and at most $r$ non-equivalent irreducible $k$-representations.

(ii) Every irreducible $k$-representation of $R$ has dimension $\le N$, which is the PI degree of $R$, and also the square root of the rank of $R$ over $Z(R)$.

(iii) There exists a  Zariski open and dense subset of the form  $V_c$ of \  $\HA(Z(R),k)$, where $0\neq c \in Z(R)$, such that every $\tau\in V_c$ is the central character of a unique (up to equivalence) irreducible $k$-representation $\rho_\tau$. Moreover all  representations $\rho_\tau$ with $\tau\in V_c$ have  dimension $N$.
\end{theorem}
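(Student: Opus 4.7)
The plan is to deduce Theorem~\ref{unicity} from the almost-Azumaya structure produced by Theorem~\ref{aa}, combined with Proposition~\ref{iso} (which describes the generic fibre of $\chi$) and Proposition~\ref{r.tau} (which controls the non-generic fibres). Specifically, Theorem~\ref{aa} supplies a nonzero $c \in Z(R)$ such that $R_c$ is Azumaya of rank $N^2$, where $N$ is the PI degree of $S^{-1}R$ for $S = Z(R)\setminus\{0\}$. This $c$ is the element appearing in part~(iii), and $N$ is the common dimension in (ii) and~(iii). By Posner's theorem (Theorem~\ref{posner}(a)), $S^{-1}R$ is central simple over $Q := S^{-1}Z(R)$ of $Q$-dimension $N^2$, so by the definition of rank recalled after Theorem~\ref{THM.unicity} we have $N = \sqrt{\mathrm{rank}}$.

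For part~(iii), $V_c$ is Zariski open by definition, and dense because $Z(R)$ is a domain: $c \neq 0$ makes $U_c \subset \MS(Z(R))$ a nonempty open subset of the irreducible variety $\MS(Z(R))$. The center of $R_c$ is $Z(R)_c$, which is affine over $k$, and the restriction map $\HA(Z(R)_c,k) \to V_c$ is a bijection (its inverse is the unique extension of $\tau$ to the localisation, which exists because $\tau(c) \neq 0$). The universal property of localisation makes the pullback $\Ir_k(R_c) \to \Ir_k(R)$ injective, compatible with central characters, and with image exactly $\chi^{-1}(V_c)$, since a central character lies in $V_c$ if and only if $c$ acts as a nonzero scalar. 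Proposition~\ref{iso} applied to $R_c$ then yields the claimed bijection $\chi^{-1}(V_c) \to V_c$ and the uniform dimension $N$.

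Part~(ii) is immediate from Posner: if $\rho: R \to M_m(k)$ is irreducible then $\ker\rho$ is a prime ideal (as $R/\ker\rho \cong M_m(k)$ is prime), so $R/\ker\rho$ has PI degree $\le N$ by Theorem~\ref{posner}(b); since the PI degree of $M_m(k)$ is $m$, we get $m \le N$. The upper bound in~(i) uses Proposition~\ref{r.tau}: for $\tau \in \HA(Z(R),k)$ with $\fm = \ker\tau$, we have $\chi^{-1}(\tau) \cong \Ir_k(R/\fm R)$, and $R/\fm R$ is spanned as a $(Z(R)/\fm) = k$-vector space by the images of the $r$ generators of $R$ over $Z(R)$, hence $\dim_k R/\fm R \le r$. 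Decomposing the semisimple quotient as $\prod_i M_{n_i}(k)$ with $\sum_i n_i^2 \le r$ then bounds the number of non-equivalent simple modules by~$r$.

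The step I expect to require the most care is the existence half of~(i), namely that $R/\fm R \neq 0$, equivalently $\fm R \neq R$, for every maximal $\fm \subset Z(R)$. The plan is a Nakayama argument in the localisation: $R_\fm := Z(R)_\fm \otimes_{Z(R)} R$ is a finitely generated module over the local ring $(Z(R)_\fm, \fm Z(R)_\fm)$, so Nakayama's lemma reduces the claim to $R_\fm \neq 0$. For this, if $1/1 = 0$ in $R_\fm$ then some $s \in Z(R) \setminus \fm$ would satisfy $s = s\cdot 1 = 0$ in $R$, forcing $s = 0 \in \fm$, a contradiction. Hence $R_\fm \neq 0$, so $\fm R_\fm \neq R_\fm$, so $\fm R \neq R$, and the nonzero finite-dimensional $k$-algebra $R/\fm R$ has at least one simple module, producing the required irreducible representation with central character $\tau$.
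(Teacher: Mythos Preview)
Your proof is correct and follows essentially the same route as the paper for parts~(ii) and~(iii): both localize at the element $c$ supplied by Theorem~\ref{aa}, apply Proposition~\ref{iso} to the Azumaya algebra $R_c$, and transport the resulting bijection back to $\chi^{-1}(V_c)$ via the universal property of localization; both bound dimensions via Theorem~\ref{posner}(b). Your treatment of~(iii) is a bit terser than the paper's---in particular the paper spells out explicitly why the restriction to $R$ of an irreducible representation of $R_c$ remains irreducible (any $\bar\rho$-invariant subspace is $\rho(c^{-1})$-invariant since $\rho(c^{-1})$ is scalar)---but the content is the same.

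The one genuine difference is in the existence half of~(i). The paper argues that $R$, being module-finite over $Z(R)$, is integral over $Z(R)$, and then invokes the lying-over theorem for prime ideals in integral extensions to produce a prime $I\lhd R$ with $I\cap Z(R)=\fm$; then $R/I$ is a nonzero prime finite-dimensional $k$-algebra, hence a matrix algebra by Wedderburn. You instead show directly that $R/\fm R\neq 0$ via Nakayama's lemma applied to the finitely generated $Z(R)_\fm$-module $R_\fm$, and then take any simple module of the nonzero finite-dimensional algebra $R/\fm R$. Both arguments are valid. Yours is arguably more elementary, staying within basic commutative algebra and avoiding the noncommutative lying-over theorem; the paper's approach has the mild advantage of producing the prime ideal (and hence the matrix-algebra quotient) in one stroke, without passing through the possibly non-semisimple algebra $R/\fm R$.
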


\def\brho{\bar \rho}
\def\btau{\bar \tau}
\proof

Since $R$ is finitely generated  as a module over its center $Z(R)$, it is a PI ring. Moreover, $R$ is prime; hence its PI degree $N$ is defined, and is equal the square root of the rank of $R$ over $Z(R)$.

(i) Suppose $\tau\in \HA(Z(R),k)$ and 
$\fm=\ker \tau$. Note that $\fm \neq Z(R)$ since $\tau$ preserves the unit and cannot be the zero map.

Since a generating set of $R$ over $Z(R)$ projects down to a generating set of $R/\fm R$ over $k= Z(R)/\fm$,  the $k$-dimension of $R/\fm R$ is $\le r$. As a  $k$-algebra whose $k$-dimension is $\le r$, the algebra $R/\fm R$ has at most $r$ non-equivalent irreducible $k$-representations. By Proposition \ref{r.tau} one has $|\Ir_k(R,\tau)| =|\Ir_k(R/\fm R)| \le r$.

Now we prove that $|\Ir_k(R,\tau)| \ge 1$. 
Finite generation of $R$ as a module over $Z(R)$  implies that $R$ is integral over $Z(R)$, which means every element in $R$ is a root of a monic polynomial in one variable with coefficients in $Z(R)$, see \cite[Corollary 13.8.9]{MR}. By the ``lying over" property (see  \cite[Theorem 13.8.14]{MR}), there exists a prime ideal $I$ of $R$ such that 
$I \cap Z(R) =\fm$. This 
identity  shows that the embedding $Z(R) \embed R$ descends to an embedding of 
$Z(R)/\fm$ (which is $k$) into $R/I$. Thus $R/I$ is a non-zero $k$-algebra. Besides $R/I$ is prime, and finite dimenisonal over $k$.
By Wedderburn's theorem (see \cite[Theorem 2.61]{Bresar}), $R$ is isomorphic to $M_s(k)$ for some positive integer $s$. The isomorphism between $R/I$ and $M_s(k)$ gives an irreducible $k$-representation whose central character is $\tau$. 

(ii) Suppose $\rho: R \to M_s(k)$ is an irreducible $k$-representation. Let  $I=\ker \rho$, then $R/I\cong M_s(k)$. Hence $s$ is the PI degree of $R/I$.  By \cite[Lemma 13.7.2]{MR}, the PI degree of $R/I$, for any prime ideal $I$, is less than or equal to the PI degree of $R$. It follows that $s \le N$.

(iii) 
By Theorem \ref{aa} there exists a nonzero $c\in Z(R)$ such that $R_c$ is Azumaya of rank $N^2$. We have the  inclusion $ R \hookrightarrow R_c$, and $R_c$ is generated as an algebra by $R$ and $c^{-1}$.
Note that $Z(R_c)= Z(R)_c$, the localization of $Z(R)$ at $c$. 

For a $k$-algebra homomorphism $\tau: Z(R_c) \to k$ let $\btau$ be its restriction to $Z(R)$. If $\btau_1=\btau_2$, then $\tau_1(c^{-1})= \tau_2(c^{-1})$ since both are the inverse of $\btau_1(c)= \btau_2(c)$. Hence the map
$\iota: \HA(Z(R_c)) \to Z(R)$ defined by $\iota(\tau)= \btau$ is injective. The image if $\iota$ is exactly $V_c$. 

Suppose $\mu\in V_c$. Then  $\mu=\btau$ for some $\tau\in \HA(Z(R_c))$. Since $R_c$ is Azumaya of rank $N^2$,
 there is an irreducible $k$-representation $\rho: R_c \to M_N(k)$, see Proposition \ref{iso}. 
 Let $\brho$ be the restriction of $\rho$ to $R$. As $c^{-1}$ is a central element, $\rho(c^{-1})$ is a scalar multiple of the identity, hence any subspace of the representation space is invariant under $\rho(c^{-1})$. It follows that any invariant subspace of $\brho$
  is also an invariant subspace of $\rho$, which does not have non-trivial invariant subspaces. Thus, $\brho$ is also irreducible.
   Clearly $\chi_{\brho}=\btau=\mu$, 
   and the dimension of $\brho$ is $N$.
    Thus, any element $\mu\in V_c$ is the central character of at least one irreducible $k$-representation. Besides, the dimension of this $k$-representation is $N$.

Suppose $\nu: R \to M_s(k)$ is another irreducible $k$-representation of $R$ with $\chi_\nu=\mu$.  
Then $\nu(c)= \mu(c) \Id_s$ is a non-zero scalar multiple of the identity, and hence is invertible. The universal property of localization implies that $\nu$ can be extended to a $k$-algebra homomorphism $\tilde \nu: R_c \to M_s(k)$, which is surjective since $\nu$ is. Thus, $\tilde \nu$ is an irreducible $k$-representation of $R_c$ whose central character is $\tau$. By Proposition \ref{iso} 
there is a unique (up to equivalence) irreducible $k$-representation whose central character is $\tau$. Hence $\tilde \nu$ is equivalent to $\rho$. It follows that $\nu$ is equivalent to $\brho$. Thus, every $\mu\in V_c$ is the central character of exactly one (up to equivalence) irreducible $k$-representation of $R$, and the dimension of the representation is $N$.
\qed

\def\bF{\bar F}
\section{The structure of the Kauffman Bracket skein algebra}\label{review}

Throughout the paper $\BN$, $\BZ$ and $\BC$ are respectively the set of non-negative integers, the set of integers, and the set of complex numbers. Let  $\bf i$ denote the complex unit. A complex number $\zeta$ is called a root of 1 if $\zeta^n=1$ for some positivie integer $n$, and the smallest such $n$ is called the order of $\zeta$ and denoted by $\ord(\zeta)$.

In this section we recall some known facts and prove additional results about the  Kauffman bracket  skein algebra of a finite type surface. We also develop techniques for parametrizing simple diagrams. New results include Theorem \ref{thm.Keven} about canonical isomorphisms of the even skein algebra at a root of 1 of order 4, Proposition \ref{r.com} about the commutation of two simple diagrams, and Theorem \ref{twistedpants} about the triangular covering of Dehn-Thurston coordinates.  

\subsection{The Kauffman bracket skein algebra} Throughout we fix a {\bf finite type surface} $F$, i.e. a surface of the form $F= \bF \setminus \cV$, where $\bF$ is a connected, closed,  oriented surface and $\cV$ is a (possibly empty) finite set. The surface $\bF$ can be uniquely recovered from $F$.
A loop in $F$  bounding a  disk in $\bF$ containing exactly one point in $\cV$ is called a {\bf peripheral loop}.

A {\bf framed link}  in  $F\times [0,1]$ is an embedding of  a disjoint union of oriented annuli in  $F\times [0,1]$. Framed links are usually considered up to isotopy. By convention the empty set is considered as a framed link with 0 components and is isotopic only to itself.
The orientation of a component of a framed link corresponds to choosing a preferred side to the annulus. If $D$ is a non-oriented link diagram on $F$ then a regular neighborhood of $D$ is a framed link with the preferred side up. We identify $D$ with the isotopy class of the framed link it defines.  Any framed link in $F \times [0,1]$ is isotopic to a framed link determined by a link diagram. A {\bf simple diagram} on  $F$ is a link diagram with no crossings and no {\bf trivial loops}, i.e. a curve bounding a disk.  We consider the empty set as a simple diagram which is isotopic only to itself.

For a  non-zero complex number $\zeta$, the Kauffman bracket skein module of $F$ at $\zeta$, denoted by $\KF$, is the $\BC$-vector space freely spanned by all isotopy classes of framed links in $F \times [0,1]$ subject to the following {\bf skein relations}
\begin{align}
\lcr &= \zeta \zer+ \zeta ^{-1}\ift   \label{KBSR}\\
\bigcirc \sqcup L  &=  (- \zeta^2 -\zeta^{-2}) L\label{KBSR2}.
\end{align}
Here the framed links in
each expression are identical outside the balls pictured in the diagrams.
The first (resp. second) diagram on the right hand side of \eqref{KBSR} will be referred to as the $+1$ (resp. $-1$) {\bf smooth resolution} of the left hand side diagram at the crossing.

 The algebra structure of $K_\zeta(F)$ comes from stacking.  More precisely, the product of two links is defined by placing the first link above the second in the direction given by the interval $[0,1]$. The product descends distributively to a product on the skein module.  Note that there are non-homeomorphic surfaces $F$ and $F'$ with $F \times [0,1] \cong F'\times [0,1]$. The isomorphism gives rise to an isomorphism between
  $\KF$ and $K_\zeta(F')$  as vector spaces, but as algebras they are generally  not isomorphic.

\def\sS{\mathscr S}

 Suppose $D$ is a link diagram on $F$. Let $\cC$ be the set of all crossings. For every map $\sigma: \cC \to \{\pm1 \}$ let $D_\sigma$ be the non-crossing diagram obtained from $D$ by doing the  $\sigma(c)$ smooth resolution at every crossing $c$.
Let $|\sigma|= \sum_{c\in \cC}  \sigma(c)$ and $l(\sigma)$ be the number of trivial components of $D_\sigma$. Let $D_\sigma'$ be the simple diagram obtained from $D_\sigma$ by removing all the trivial components. Using the two  skein relations, we have 
\be 
\label{eq.prod}
D = \sum_{\sigma: \cC \to \{\pm 1\}} \zeta^{|\sigma|} (-\zeta^2 - \zeta^{-2})^{l(\sigma)} \, D_\sigma' \quad \text{in } \KF.
\ee
 
 This shows the set $\sS$ of all isotopy classes of simple diagrams spans $\KF$. A little more work will show  the following.

\begin{prop}[see \cite{PS}] 
The set $\sS$ of all isotopy classes of simple diagrams
 forms a basis for $K_{\zeta}(F)$ as a vector space over $\mathbb{C}$.
\end{prop}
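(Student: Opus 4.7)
The spanning assertion is already delivered by the resolution identity \eqref{eq.prod}, so the real task is linear independence. My plan is to build a left inverse to the natural linear map $V \to K_\zeta(F)$, where $V$ denotes the free $\BC$-vector space with basis $\sS$. Concretely, I will construct a $\BC$-linear map $\beta: K_\zeta(F) \to V$ that restricts to the identity on $\sS$; any linear relation $\sum c_i D_i = 0$ in $K_\zeta(F)$ among distinct isotopy classes $D_i \in \sS$ will then be carried by $\beta$ to $\sum c_i D_i = 0$ in $V$, forcing every $c_i$ to vanish.

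The map $\beta$ is first defined on the free vector space generated by link diagrams on $F$ by the state-sum formula suggested by \eqref{eq.prod}: for a diagram $D$ with crossing set $\cC$, set
\[
\beta(D) \;=\; \sum_{\sigma : \cC \to \{\pm 1\}} \zeta^{|\sigma|} (-\zeta^2 - \zeta^{-2})^{l(\sigma)} \, D_\sigma' \;\in\; V,
\]
and extend linearly. If $D$ is already a simple diagram, then $\cC = \emptyset$, the only state is empty, $l(\sigma) = 0$, and $\beta(D) = D$, as required. To descend $\beta$ to $K_\zeta(F)$, I must verify two points: (i) $\beta(D)$ depends only on the isotopy class of the blackboard-framed link carried by $D$, and (ii) the formula for $\beta$ is consistent with the Kauffman skein relations \eqref{KBSR}--\eqref{KBSR2}. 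Statement (ii) is immediate from the construction, since the weights in the state sum are precisely those prescribed by the two skein relations applied one crossing at a time. Statement (i) amounts to showing invariance of $\beta$ under the framed Reidemeister II and III moves; this is the classical Kauffman-bracket computation, in which the $\zeta$ and $\zeta^{-1}$ smoothings across an R-II bigon combine with the trivial-loop coefficient $-\zeta^2 - \zeta^{-2}$ so that the cross terms cancel, and R-III then follows formally from R-II.

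One further subtlety I would cite rather than prove: two simple diagrams on $F$ represent isotopic blackboard-framed links in $F \times [0,1]$ if and only if they are isotopic on $F$. This is a standard consequence of transversality together with the absence of crossings, and it guarantees that the values of $\beta$ on $\sS \subset V$ are compatible with how isotopy classes of simple diagrams sit inside $K_\zeta(F)$. With (i), (ii) and this fact in hand, $\beta$ descends to a well-defined $\BC$-linear map $K_\zeta(F) \to V$ that is the identity on $\sS$, yielding the desired linear independence.

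The main obstacle is the local combinatorial verification of R-II invariance; everything else---R-III, compatibility with the skein relations, and the conclusion of linear independence---is either formal or a direct consequence of the construction. This explains why the specific coefficients $\zeta^{\pm 1}$ and $-\zeta^2 - \zeta^{-2}$ appear in \eqref{KBSR} and \eqref{KBSR2}: they are exactly what is needed to make $\beta$ an isotopy invariant.
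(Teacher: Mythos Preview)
The paper itself does not prove this proposition; after deriving spanning from \eqref{eq.prod} it remarks that ``a little more work will show'' the basis statement and cites Przytycki--Sikora \cite{PS}. Your argument---define the Kauffman state sum $\beta(D)\in V$ on diagrams, verify invariance under the moves generating framed-link isotopy, observe that $\beta$ respects the skein relations by construction, and conclude that $\beta$ descends to a retraction $K_\zeta(F)\to V$ fixing $\sS$---is exactly the standard proof in the literature and is correct.

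One point to tighten: the moves relating diagrams of isotopic framed links in $F\times[0,1]$ are R-II, R-III, \emph{and} the framed first Reidemeister move (cancellation of a pair of opposite kinks). The last is not a consequence of R-II and R-III alone, since those preserve the rotation number of each component. Invariance of $\beta$ under the framed R-I move is, however, immediate: the two kinks contribute factors $-\zeta^{3}$ and $-\zeta^{-3}$ whose product is $1$. With that one-line addendum your argument is complete.
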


Note that the same set $\sS$ serves as a vector space basis for $\KF$ for all non-zero $\zeta$. It is the algebra structure which depends on $\zeta$.

Every  $0\neq \al \in \KF$ has a unique {\bf standard presentation}, which is a finite sum
\be
\label{eq.pres} 
\al = \sum_{j\in J} c_j \al_j,
\ee
where $c_j\neq 0$ and $\al_j\in \sS$ are distinct. 

\begin{remark}Kauffman bracket skein modules for oriented 3-manifolds were introduced independently by Turaev and Przytycki \cite{Prz,Turaev} in an attempt to generalize the Jones polynomial to general 3-manifolds. The algebra structure of $\KF$  was first introduced by Turaev.
\end{remark}

\def\ev{{\mathrm{ev}}}
\def\Kevz{K^\ev_\zeta(F)}
\subsection{Even skeins and the case $\zeta\in \{\pm 1, \pm \bf i\}$}\label{sec.even}

 When $\zeta=\pm 1$ the algebra $K_{\zeta}(F)$ is commutative. The algebra $K_{-1}(F)$  can be canonically identified with the coordinate ring $\BC[\cX(F)]$ of the $SL_2\mathbb{C}$-character variety $\cX(F)$ of the fundamental group of $F$, \cite{B,PS}. Barrett \cite{Ba} constructed an algebra isomorphism from $\KF$ to $K_{-\zeta}(F)$  for every spin structure of $F$. Hence
$K_1(F)$ is also isomorphic to $\BC[\cX(F)]$, although the isomorphism is not canonical. However,  $K_1(F)$ can be canonically identified with the coordinate ring of a twisted $SL_2(\BC)$-character variety of the fundamental group of $F$, \cite{BW0,Thurston}.

Recall that if $\alpha$ and $\beta$ are properly embedded $1$--manifolds in $F$, and at least one of $\alpha$ or $\beta$ is compact, their {\bf geometric intersection number} $i(\alpha,\beta)$  is the minimum number of points in $\alpha'\cap \beta'$ over all properly embedded $1$-manifolds $\alpha'$ and $\beta'$ that are isotopic to $\alpha$ and $\beta$ via a compactly supported isotopy, and are transverse to one another. 
Let $\BZ_2=\BZ/2\BZ$. Define
$i_2(\al,\beta)\in \BZ_2$ by $i_2(\al,\beta)= i(\al,\beta) \pmod 2$.

A simple diagram $\al$, or its isotopy class, is called {\bf even} if $i_2(\al,\beta)=0$  for all simple loops $\beta$.
Let $\sS^\ev\subset \sS$ be the subset consisting of even elements, and  $\Kevz$ be the $\BC$-subspace of $\KF$ spanned by $\sS^\ev$.

Although the algebras $K_\zeta(F)$ and  $K_{-\zeta}(F)$ are not canonically isomorphic,  we have the following.
\begin{prop}  \label{r.sign}
 (a) The susbspace $\Kevz$ is a subalgebra of $\KF$.

(b) There is a canonical algebra isomorphism $h: \Kevz \to K^\ev_{-\zeta}(F)$ given by $h(\al)=\al$ for all $\al\in \sS^\ev$.
\end{prop}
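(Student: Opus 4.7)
For part (a), the plan is to apply the product formula \eqref{eq.prod} to the stacking of $\al$ above $\beta$. Pick transverse representatives of $\al,\beta\in\sS^\ev$ and let $D$ be the resulting link diagram on $F$; its crossings $\cC$ are exactly the intersection points of the two curves. Then \eqref{eq.prod} expresses $\al\cdot\beta$ in $\KF$ as a $\BC$-linear combination of smoothed simple diagrams $D'_\sigma$, so it suffices to show each $D'_\sigma$ is even. Given any simple closed curve $\gamma$ on $F$, isotope $\gamma$ off the (finitely many) crossings. Locally at each crossing both possible smoothings replace the crossing by two disjoint arcs inside a small disk disjoint from $\gamma$, so mod-$2$ intersection with $\gamma$ is unchanged by any smoothing; deleting trivial loops (which bound disks in $\bar F$) likewise preserves $i_2(\cdot,\gamma)$. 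Hence
\[
i_2(D'_\sigma,\gamma) \;=\; i_2(\al\cup\beta,\gamma) \;=\; i_2(\al,\gamma)+i_2(\beta,\gamma) \;=\; 0,
\]
so $D'_\sigma\in\sS^\ev$ and $\Kevz$ is closed under multiplication.

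For part (b), the map $h$ is automatically a $\BC$-vector-space isomorphism, because it identifies the basis $\sS^\ev$ of $\Kevz$ with the same basis of $K^\ev_{-\zeta}(F)$. To check multiplicativity, apply \eqref{eq.prod} in each algebra to the same stacked diagram $D$. Since $(-\zeta)^{\pm 2}=\zeta^{\pm 2}$, the bubble scalar $-\zeta^2-\zeta^{-2}$ is unchanged, so
\[
\al\cdot_\zeta\beta = \sum_\sigma \zeta^{|\sigma|}(-\zeta^2-\zeta^{-2})^{l(\sigma)} D'_\sigma, \qquad \al\cdot_{-\zeta}\beta = \sum_\sigma (-1)^{|\sigma|}\zeta^{|\sigma|}(-\zeta^2-\zeta^{-2})^{l(\sigma)} D'_\sigma.
\]
Both expansions are indexed by the same smoothings of the same diagram $D$; they agree, and hence $h(\al\cdot_\zeta\beta)=h(\al)\cdot_{-\zeta}h(\beta)$, provided one can show $(-1)^{|\sigma|}=1$ for every $\sigma$.

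The remaining step, which is the main point of the proof, is this parity check. Choose $\al$ and $\beta$ in minimal position, so that $|\cC|$ equals the geometric intersection number $i(\al,\beta)$. Write $\beta=\bigsqcup_j \beta_j$ as its decomposition into simple closed curves; the evenness of $\al$ gives $i_2(\al,\beta_j)=0$ for each $j$, hence $i(\al,\beta)=\sum_j i(\al,\beta_j)$ is even. Since $|\sigma|=n_+-n_-$ with $n_++n_-=|\cC|$ even, $|\sigma|$ is itself even and $(-1)^{|\sigma|}=1$. The main obstacle is exactly this parity argument: once one observes that evenness passes additively across the components of $\beta$, the rest of the proof reduces to the elementary facts that smoothing of crossings and deletion of disk-bounding loops leave mod-$2$ intersection numbers invariant.
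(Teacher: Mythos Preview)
Your proof is correct and follows essentially the same approach as the paper's. The paper phrases part (a) in terms of the $\BZ_2$-homology class being preserved under smoothing (equivalent to your local ``$\gamma$ avoids the resolution disks'' argument), and for part (b) simply asserts that evenness forces the number of crossings to be even, whereas you spell this out by decomposing $\beta$ into components and invoking minimal position; note that minimal position is not actually needed, since for any transverse representatives $|\al\cap\beta_j|\equiv i_2(\al,\beta_j)\pmod 2$, so the crossing count is even regardless.
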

\begin{proof} Suppose $\al, \al'$ are even simple diagrams. By isotopy we assume $\al$ is transversal to $\al'$. The product $\al \al'$ is presented by the link diagram $D=\al \cup \al'$ with $\al$ above $\al'$, and can be calculated by \eqref{eq.prod}. 

(a) Since  a smooth resolution does not change the homology class in $H_1(F;\BZ_2)$, for every $\sigma$ of \eqref{eq.prod} we have $D_\sigma' =\al + \al '$ in $H_1(F;\BZ_2)$. Hence for every simple diagram~$\beta$,
$$ i_2(D_\sigma',\beta) =i_2(\al, \beta) + i_2(\al', \beta) =0.$$

This shows each $D_\sigma' \in \sS^\ev$, and $\al\beta \in \Kevz$.

b) Since $\al,\al'$ are even, there is an even number of crossings. It follows that $|\sigma|$ is always even. Hence the right hand side of \eqref{eq.prod} remains the same if we replace $\zeta$ by $-\zeta$. This shows $h$ is an algebra homomorphism. Since $h^2$ is the identity, it is invertible. This proves (b).
\end{proof}

\def\bA{\bar {\cA}}
\def\bF{\bar F}
\def\bH{\bar H}
\def\bv{\bar \varphi}

By \cite{M,Si}, when $\zeta =\pm\bf i$, the skein algebra $\KF$ is a twisted version of $K_{-1}$. We will modify this result in the following form.

Recall that if $\al$ and $\beta$ are oriented simple diagrams on $F$, then  the {\bf algebraic intersection index} $\omega(\al ,\beta) \in \BZ$ is the number of intersections of $\al$ and $\beta$, counted with sign (after an isotopy to make  $\al$ and $\beta$ transversal). Here the sign of an intersection point is positive if the tangents of $\al$ and $\beta$ at that point form the positive orientation of the surface, and it is negative otherwise. Unlike the geometric intersection index, both the algebraic intersection index and $i_2$ have homological flavor: 
 the algebraic intersection index is an anti-symmetric bilinear form 
 \be \omega: H_1(F;\BZ) \otimes_\BZ H_1(F;\BZ) \to \BZ \label{eq.form}\ee
  and $i_2$ is its reduction modulo 2. Besides, $\omega(\al,\beta)$ in $F$ is equal to $\omega(\al,\beta)$ in $\bF$. The kernel of the form $\omega$ is the subgroup $H_1^\partial(F;\BZ)$ generated by peripheral loops. The quotient $\bH_1(F;\BZ):= H_1(F;\BZ)/H_1^\partial(F;\BZ)$ is canonically isomorphic to $H_1(\bF;\BZ)$. The form $\omega$ descends to a bilinear form on $\bH_1(H,\BZ)$.
  
 Let $\bA$ be the $\BC$-algebra generated by symbols $[\gamma]$ for each $\gamma\in \bH_1(F;\BZ)=H_1(\bF;\BZ)$, subject to the following relations
\be 
\label{eq.defA}
[\gamma]^2=1, \quad [\gamma][\gamma'] = {\bf i} ^{\omega(\gamma', \gamma)} [\gamma+ \gamma'].
\ee
This algebra was introduced in \cite{BHMV}  and also studied in \cite{M}. 
 As a $\BC$-vector space, $\bA$ has dimension $2^{b_1(\bF)}$. In particular, it is not a zero vector space. For a simple diagram $\al$, define $[\al]=[\tilde \al]$, where $\tilde \al$ is $\al$ equipped with an arbitrary orientation. Since the components of $\al$ are disjoint, the defining relations \eqref{eq.defA} show that $[\al]$ does not depend on the choice of the orientation.

The following is a modification of an important result of \cite{M}.
\begin{theorem} \label{thm.Marche}
The $\BC$-linear map  $\bv: K_{-\bf i}(F)\to K_{-1}(F) \otimes_\BC \bA$ defined by
$\bv(\al) = (-1)^{l(\al)} \al \otimes [\al]$ for all $\al\in \sS$, where $l(\al)$ is the number of components of $\al$, is an injective algebra homomorphism.
\end{theorem}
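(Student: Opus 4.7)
Plan: The map $\bv$ is defined on the basis $\sS$ and extended $\BC$-linearly; the two things to verify are injectivity and multiplicativity. Injectivity is immediate: $\sS$ is also a basis of $K_{-1}(F)$, so we have a canonical decomposition $K_{-1}(F)\otimes_\BC\bA=\bigoplus_{\al\in\sS}\al\otimes\bA$, and distinct basis elements $\al\in\sS$ are sent by $\bv$ into distinct summands. Each image $(-1)^{l(\al)}\al\otimes[\al]$ is nonzero because $[\al]$ is a unit in $\bA$ (the relation $[\gamma]^2=1$ gives $[\al]^{-1}=[\al]$), so any standard presentation $\sum c_j\al_j$ with $c_j\neq 0$ has nonzero image.

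For multiplicativity, fix $\al,\beta\in\sS$ in transverse position with $n=i(\al,\beta)$ crossings, let $\cC$ denote the crossing set, and expand both $\al\cdot_{-\bf i}\beta$ and $\al\cdot_{-1}\beta$ via the state sum \eqref{eq.prod}. The identity to establish, $\bv(\al\cdot_{-\bf i}\beta)=(-1)^{l(\al)+l(\beta)}(\al\cdot_{-1}\beta)\otimes[\al][\beta]$, becomes an equality of sums indexed by $\sigma:\cC\to\{\pm 1\}$ in $K_{-1}(F)\otimes\bA$. Two observations let us pull the $\bA$-factor outside the sum: smoothing preserves the $\BZ_2$-homology class, so the $\bA$-class of every $D_\sigma'$ equals $[\al+\beta]$; and the defining relation $[\al][\beta]=\bf i^{\omega(\beta,\al)}[\al+\beta]$ is uniform in $\sigma$. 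A short induction on $n$ also yields the parity identity $l(D_\sigma)\equiv l(\al)+l(\beta)+n\pmod 2$, which cancels the component-count sign introduced by $\bv$ (recall $l(D_\sigma)=l(D_\sigma')+l(\sigma)$).

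The remaining step is a $\sigma$-wise scalar comparison of $(-\bf i)^{|\sigma|}\,2^{l(\sigma)}$ with $(-1)^{|\sigma|}(-2)^{l(\sigma)}$; these must differ by the constant $\bf i^{\omega(\beta,\al)}$. I plan to handle this sign chase by orienting $\al$ and $\beta$ and singling out the oriented smoothing $\sigma_0$, for which $|\sigma_0|=\pm\omega(\al,\beta)$ in the relevant convention. Any other $\sigma$ differs from $\sigma_0$ by flipping signs on a subset of $\cC$; each flip changes $|\sigma|$ by $\pm 2$ and $l(D_\sigma)$ by $\pm 1$ in tandem, so a crossing-by-crossing check shows that the combined factor remains constant and equals $\bf i^{\omega(\beta,\al)}$. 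This sign-tracking step is the main obstacle; once it is settled, the two state sums agree term by term and multiplicativity follows.
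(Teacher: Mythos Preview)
Your injectivity argument is correct and is essentially the paper's own argument. For multiplicativity, however, the paper takes a completely different route: it cites March\'e's theorem \cite{M}, which proves the analogous statement for the larger algebra $\cA$ built from $H_1(F;\BZ)$ rather than $\bA$, and then observes that the obvious surjection $\psi:\cA\to\bA$ is an algebra map, so that $\bv=(\mathrm{id}\otimes\psi)\circ\varphi$ is an algebra homomorphism by composition. No direct state-sum computation is carried out.

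Your direct attack on multiplicativity has a genuine gap. The claim ``smoothing preserves the $\BZ_2$-homology class, so the $\bA$-class of every $D_\sigma'$ equals $[\al+\beta]$'' is false: the element $[\gamma]\in\bA$ depends on $\gamma\in\bH_1(F;\BZ)$, not merely on its reduction modulo~$2$. From the relations \eqref{eq.defA} one computes $[2\delta]=[\delta]^2=1$ and hence $[\gamma+2\delta]={\bf i}^{\omega(\gamma,2\delta)}[\gamma][2\delta]=(-1)^{\omega(\gamma,\delta)}[\gamma]$, which need not equal $[\gamma]$. Concretely, on the torus with standard generators $a,b$ and $\omega(a,b)=1$, the two resolutions of the single crossing of $a\cup b$ are homologous to $a+b$ and $a-b$ respectively, and one finds $[a+b]={\bf i}\,[a][b]$ while $[a-b]=-{\bf i}\,[a][b]$. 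Thus $[D_\sigma']$ genuinely varies with $\sigma$ and cannot be pulled outside the state sum. Your subsequent scalar comparison then fails for exactly this reason: after the parity cancellation you are left needing $(-1)^n{\bf i}^{|\sigma|}$ to equal the fixed quantity ${\bf i}^{\omega(\beta,\al)}$ for every $\sigma$, which is impossible since $|\sigma|$ runs over $\{-n,-n+2,\dots,n\}$.

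The direct approach can be salvaged, but you must track how $[D_\sigma']$ changes when a single resolution is flipped (each flip contributes a specific power of ${\bf i}$ coming from the local intersection sign) and incorporate that factor into the crossing-by-crossing induction alongside the changes in $|\sigma|$ and $l(D_\sigma)$. That bookkeeping is precisely the substance of March\'e's original computation in~\cite{M}.
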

\begin{proof} Let $\cA$ and $\varphi$  be  defined exactly like $\bA$ and $\bv$,  only with $\bH(F;\BZ)$ replaced by $H(F;\BZ)$. The theorem, with $\bA$ and $\bv$ replaced by $\cA$ and $\varphi$, was 
\cite[Theorem 3.2]{M}. 

Since the algebraic intersection index on $\bH_1(F;\BZ)$ comes from that on $H_1(F;\BZ)$, the map $\psi: \cA \to \bA$ given on generators by $\psi(\al) =\bar \al$ for $\al\in H_1(F;\BZ)$, where $\bar \al$ is the image of $\al$ under $H_1(F;\BZ) \to \bH_1(F;\BZ)$, is a well-defined algebra homomorphism. It follows that $\bv= (\mathrm{id} \otimes \psi) \circ \varphi$, a composition of two algebra homomorphisms,  is an algebra homomorphism.

Suppose $\al\in K_{-\bf i}(F)$ such that $\bv(\al)=0$. Let $\al=\sum_{j\in J} c_j \al_j$ be the standard presentation \eqref{eq.pres}. Then 
\be 
\label{eq.0g}
\sum_{j\in J} (-1)^{l(\al_j)}\, c_j \al_j \otimes [\al_j]=0 \quad \text{in } \ K_{-1}(F)\otimes \bA.
\ee
As $\sS$ is a $\BC$-basis of $K_{-1}(F)$, we have a  direct sum decomposition $K_{-1}(F)\otimes \bA= \bigoplus_{\beta\in \sS} \beta \otimes \bA$. Hence \eqref{eq.0g} implies that for each $j$ one has $[\al_j]=0$, which is impossible since $[\al_j]^2=1$. Thus $\bv(\al) \neq 0$, and the map $\bv$ is injective.
\end{proof}

\begin{remark} Actually in \cite{M}, J. March\'e proved Theorem 3.2 only for closed surfaces, but the proof does not use the fact that the surface is closed and works for all finite type surfaces. We would like to thank  March\'e for confirming this.
\end{remark}

  \def\tal{\tilde \al}
  
  It turns out that $\Kevz$ for all $\zeta\in \{\pm1, \pm \bf i\}$ are the same.
 \begin{theorem} \label{thm.Keven}
 Suppose $\zeta\in \{\pm1, \pm \bf i\}$ and $F$ is a finite type surface. Then there is a canonical algebra isomorphism $f: K^\ev_\zeta(F)\to K^\ev_{-1}(F)$, defined for $\al\in \sS^\ev$ by
$$ f(\al) = \begin{cases}  \al \quad &\text{if } \ \zeta =\pm 1 \\
(-1)^{l(\al)} \,\al \quad &\text{if } \ \zeta =\pm {\bf i},
\end{cases}  
$$
where $l(\al)$ is the number of components of $\al$. 
 \end{theorem}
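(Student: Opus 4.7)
My plan is to treat the four values of $\zeta$ separately, reducing each to the case $\zeta=-{\bf i}$, which I would handle using March\'e's result (Theorem \ref{thm.Marche}). The case $\zeta=-1$ is trivial: the map $f$ is the identity. The case $\zeta=1$ is a direct application of Proposition \ref{r.sign}(b) with $\zeta=1$, giving a canonical algebra isomorphism $K^\ev_1(F)\to K^\ev_{-1}(F)$ that fixes each basis element, exactly matching the stated formula for $f$. The case $\zeta={\bf i}$ reduces to $\zeta=-{\bf i}$ by composing Proposition \ref{r.sign}(b) applied with $\zeta={\bf i}$ (which gives an identity-on-basis isomorphism $K^\ev_{{\bf i}}(F)\to K^\ev_{-{\bf i}}(F)$) with the map for $\zeta=-{\bf i}$; the signs $(-1)^{l(\al)}$ match up correctly.

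For $\zeta=-{\bf i}$, I would use the injective algebra homomorphism $\bv:K_{-{\bf i}}(F)\to K_{-1}(F)\otimes\bA$ of Theorem \ref{thm.Marche}. The strategy is to show that $\bv$ restricts to a map $K^\ev_{-{\bf i}}(F)\to K^\ev_{-1}(F)\otimes 1$, so that composing with the obvious isomorphism $K^\ev_{-1}(F)\otimes 1\cong K^\ev_{-1}(F)$ yields an injective algebra homomorphism sending $\al\in\sS^\ev$ to $(-1)^{l(\al)}\al$. This map is also surjective because $\al\mapsto(-1)^{l(\al)}\al$ is a signed permutation of the basis $\sS^\ev$, so it is the desired isomorphism.

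The crux of the argument, and the main obstacle, is to prove that $[\al]=1$ in $\bA$ for every even simple diagram $\al$. The relations \eqref{eq.defA} together with $\omega(\gamma,\gamma)=0$ give $[2\gamma]=[\gamma]^2=1$, so $[\gamma]$ only depends on the image of $\gamma$ in $H_1(\bar F;\BZ_2)=\bH_1(F;\BZ_2)$. It therefore suffices to show that the mod-$2$ homology class of an even simple diagram $\al$ vanishes in $H_1(\bar F;\BZ_2)$. By definition, evenness means $i_2(\al,\beta)=0$ for every simple loop $\beta$ in $F$; since simple loops generate $H_1(F;\BZ_2)$, the class $[\al]\in H_1(F;\BZ_2)$ lies in the radical of the mod-$2$ intersection form. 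That radical is exactly $H_1^\partial(F;\BZ_2)$, because the induced form on $\bH_1(F;\BZ_2)=H_1(\bar F;\BZ_2)$ is nondegenerate by Poincar\'e duality on the closed surface $\bar F$. Therefore $[\al]$ maps to zero in $\bH_1(F;\BZ_2)$, and $[\al]=1$ in $\bA$. With this identity in hand, the remaining verifications are routine book-keeping or direct appeals to results stated earlier in the excerpt.
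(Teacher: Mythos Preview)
Your approach is essentially the same as the paper's: dispose of $\zeta=\pm1$ via Proposition~\ref{r.sign}, reduce $\zeta={\bf i}$ to $\zeta=-{\bf i}$ by the same proposition, and for $\zeta=-{\bf i}$ prove $[\al]=1$ in $\bA$ whenever $\al\in\sS^\ev$ and then read off the result from the formula for $\bv$ in Theorem~\ref{thm.Marche}. One small correction: the relations~\eqref{eq.defA} do \emph{not} imply that $[\gamma]$ depends only on the mod-$2$ class of $\gamma$ (a short computation gives $[\gamma+2\delta]=(-1)^{\omega(\gamma,\delta)}[\gamma]$), but this does not affect your argument, since you only need the special case that $\gamma\equiv 0\pmod 2$ in $\bH_1(F;\BZ)$ forces $[\gamma]=1$, which follows directly from $[2\delta]=[\delta]^2=1$---and this is exactly how the paper phrases it.
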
 
 \begin{proof} Proposition \ref{r.sign} proves the theorem for the case $\zeta=1$ and  also shows that the case $\zeta=\bf i$ follows from the case $\zeta=-\bf i$, which we assume from now on.
 
\begin{lemma} If $\al\in \sS^\ev$, then $[\al]=1$ in $\bA$.
\end{lemma} 
\begin{proof} Let $\tal$ be $\al$ with an arbitrary orientation. Note that $\al$ being even means $\al=0$ in the homology group $\bH_1(F;\BZ_2)= H_1(\bF;\BZ_2)$. We have the following exact sequence
$$ 0\to 2 \bH_1(F;\BZ) \to \bH_1(F;\BZ) \overset j {\onto} \bH_1(F;\BZ_2)\to 0.$$
Since $j(\tal)=\al =0$ in $\bH_1(F;\BZ_2)$, from the exactness we have $\tal= 2 \al'$ in $\bH_1(F;\BZ)$ for some $\al'\in \bH_1(F;\BZ)$.  Then $[\al]=[\tal] =[\al']^2 =1$.
\end{proof}
The lemma and the definition of $\bv$ (of Theorem \ref{thm.Marche}) gives
$$ \bv(\al) = (-1)^{l(\al)} \al \otimes [\al] = (-1)^{l(\al)} \al \otimes 1 \quad \forall \al \in \sS^\ev.$$
Since $\sS^\ev$ is a $\BC$-basis of both $\Kevz$ and $K^\ev_{-1}(F)$, 
this shows the algebra homomorphism $\bv$ maps $\Kevz$ isomorphically onto $K^\ev_{-1}\otimes 1$, which is isomorphic (as algebras) to $K^\ev_{-1}$. Combining the two algebra isomorphisms, we see that $f$ is an algebra isomorphism.
 \end{proof}  
 
We give a geometric description of $\Kevz$ for $\zeta \in \{\pm 1, \pm \bf i\}$ in Section \ref{finite}.

\subsection{The threading map, finite generation, and zero divisors}

 The {\bf Chebyshev polynomials of the first kind} are defined recursively by letting $T_0(x)=2$, $T_1(x)=x$ and  $T_k(x)=xT_{k-1}(x)-T_{k-2}(x)$. 
  They  
  satisfy the  {\bf product to sum formula}:
\begin{equation}\label{prodsum} T_k(x)T_l(x)=T_{k+l}(x)+T_{|k-l|}(x).\end{equation}

Parts (a) and (b) of the following theorem are important results of   Bonahon and Wong \cite{BW1}. Part (c) is a slight extension proved in \cite{Le}.

\def\ep{\epsilon}
\begin{theorem}[\cite{BW1,Le}]\label{imageofCH}
Let $F$ be a finite type surface and $\zeta$ be root of unity of order $n$. Let  $m=\ord(\zeta^4)=\frac{n}{gcd(n,4)}$ and $\epsilon=\zeta^{m^2}$.

(a) There exists a unique algebra homomorphism
\begin{equation}
Ch:K_{\epsilon}(F)\rightarrow K_{\zeta}(F)
\end{equation}
such that if $\al$ is a simple loop on $F$ then $Ch(\al)= T_m(\al)$. Moreover $Ch$ is injective.

(b) If  $n\not  \equiv 0 \pmod{4}$,   then $Ch(K_{\epsilon}(F))$ is contained in the center of $K_{\zeta}(F)$.

(c) If $n  \equiv 0 \pmod{4}$,  then for any simple diagrams $\al$ and $\beta$, one has
\begin{equation}\label{Cha}
Ch(\alpha)\,\beta=(-1)^{i(\al,\beta)}\beta\, Ch(\alpha). \end{equation}
Consequently, $Ch(K^\ev_\ep(F))$ is contained in the center of $K_{\zeta}(F)$.
\end{theorem}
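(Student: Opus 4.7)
The plan is to construct $Ch$ on the simple-diagram basis $\sS$ of $\KeF$ and then verify it is an algebra homomorphism by direct skein calculation. For $\al\in\sS$ with components $\al_1,\dots,\al_k$ (parallel copies counted separately), set
\[
Ch(\al):=T_m(\al_1)\,T_m(\al_2)\cdots T_m(\al_k)\in\KF,
\]
where $T_m(\al_i)$ is the skein obtained by threading $\al_i$ with the Chebyshev polynomial $T_m$. Since the $\al_i$ are pairwise disjoint, the $T_m(\al_i)$ commute in $\KF$, so $Ch(\al)$ is unambiguous; extending by $\BC$-linearity gives a map $Ch:\KeF\to\KF$ whose value on every simple loop is $T_m(\al)$, and uniqueness is forced by the fact that simple loops lie in $\sS$ and $\sS$ is a basis.

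The main obstacle is multiplicativity: it suffices to check
\[
Ch(\al\cdot_\ep\beta)=Ch(\al)\cdot_\zeta Ch(\beta)
\]
when $\al,\beta\in\sS$ are two simple loops meeting transversally at $i(\al,\beta)$ points. Expanding the left side via \eqref{KBSR} gives $2^{i(\al,\beta)}$ terms with weights that are monomials in $\ep$ and $-\ep^2-\ep^{-2}$, while expanding $T_m(\al)\,T_m(\beta)$ in $\KF$ gives $2^{m^2 i(\al,\beta)}$ terms from the $m\!\times\!m$ crossing bundles created by threading. The central fact is the Bonahon--Wong \emph{miraculous cancellation}: with the precise choice $\ep=\zeta^{m^2}$, the Chebyshev recursion $T_k=xT_{k-1}-T_{k-2}$ forces every ``mixed'' resolution to telescope away, leaving only the $2^{i(\al,\beta)}$ coherent resolutions, whose weights $\zeta^{\pm m^2}=\ep^{\pm 1}$ exactly match those on the $\KeF$ side. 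I would follow the combinatorial route of L\^e, isolating the cancellation inside a single bigon of an ideal triangulation and iterating; the product-to-sum formula \eqref{prodsum} then lifts the identity from simple loops to arbitrary simple diagrams.

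Injectivity follows from a triangular leading-term argument. Fix any parametrization of $\sS$ (intersection coordinates with an ideal triangulation when $F$ has punctures; Dehn--Thurston coordinates otherwise). Since $T_m(\al_i)=\al_i^m+(\text{lower order})$, the element $Ch(\al)$ has leading $\sS$-coordinate vector equal to $m$ times that of $\al$, and distinct $\al\in\sS$ therefore produce distinct leading terms, which forces $Ch$ to be injective.

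Parts (b) and (c) follow from a further application of the same cancellation. For a simple loop $\gamma$ and a simple diagram $\beta$ meeting transversally at $i(\gamma,\beta)$ points, telescoping once more yields the exact identity
\[
T_m(\gamma)\,\beta=\zeta^{2m\,i(\gamma,\beta)}\,\beta\,T_m(\gamma)\quad\text{in }\KF.
\]
Since $2m=2n/\gcd(n,4)$, a case-by-case check gives $\zeta^{2m}=1$ when $n\not\equiv 0\pmod 4$ and $\zeta^{2m}=-1$ when $n\equiv 0\pmod 4$, yielding (b) and (c) respectively. Restricting to $\al\in\sS^\ev$ in case (c) makes $i(\al,\beta)$ even for every $\beta\in\sS$, so the sign is always $+1$ and $Ch(K^\ev_\ep(F))\subset Z(\KF)$.
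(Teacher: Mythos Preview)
The paper does not give its own proof of this theorem: it is quoted from \cite{BW1} (parts (a) and (b)) and \cite{Le} (part (c)) and used as a black box. So there is no in-paper argument to compare your proposal against, only the cited references.

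Your outline follows the architecture of those references, and the parts that are genuinely routine are fine: the definition of $Ch$ on simple diagrams, the uniqueness argument (a simple diagram is literally the product of its components, so an algebra homomorphism fixed on simple loops is forced on $\sS$), and the injectivity via leading terms. However, the two substantive steps are only asserted, not proved. The ``miraculous cancellation'' is the entire content of \cite{BW1}; Bonahon and Wong establish it by passing through the quantum trace to the Chekhov--Fock algebra, while L\^e's alternative proof rests on a carefully stated \emph{transparency} lemma for the threaded arc $T_m(\gamma)$ (over-crossing equals under-crossing, up to the sign $\zeta^{2m}$), proved by an induction on $m$ that combines the Chebyshev recursion with the skein relation and the identity $T_m(u+u^{-1})=u^m+u^{-m}$. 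Your phrase ``telescoping once more yields the exact identity $T_m(\gamma)\,\beta=\zeta^{2m\,i(\gamma,\beta)}\,\beta\,T_m(\gamma)$'' is not a proof: that identity \emph{is} the transparency statement, and the telescoping you allude to does not occur automatically --- one must show that all the mixed resolutions at an $m$-fold crossing bundle cancel in pairs, which is exactly the nontrivial combinatorics of \cite{BW1,Le}. As written, the proposal is a correct table of contents for the proof but does not supply the key lemma.
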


Hence we can think of $K_{\epsilon}(F)$ as a subalgebra of the center of 
$K_{\zeta}(F)$  when $n\neq 0 \pmod{4}$, and  $K_{\epsilon}^\ev(F)$ as  a subalgebra of the center of 
$K_{\zeta}(F)$ when $n=0\pmod{4}$.

It was proved in \cite{Bullock} that the skein algebra $\KF$  is {\bf affine} over $\BC$, i.e. it is  finitely generated as an algebra over $\BC$. This result is strengthened in 
\cite{AF2} as follows. 
\begin{theorem}[\cite{AF2}] \label{spanning}If $F$ is a surface of finite type,  then there exist simple closed curves $J_1,\ldots,J_s$ on $F$  so that the collection of skeins 
 $T_{k_1}(J_1) T_{k_2}(J_2) \dots T_{k_s}(J_s)$,   where the $k_i$ range over all natural  numbers, spans $\KF$. \end{theorem}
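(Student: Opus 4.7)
The plan is to build the list $J_1, \ldots, J_s$ from a Dehn-Thurston coordinate system --- a pants decomposition $\cP = \{P_1, \ldots, P_p\}$ of $\bF$ together with a dual curve system $\cD = \{D_1, \ldots, D_p\}$ --- augmented by the peripheral loops at the punctures of $F$, and then to reduce the claim to a spanning statement for ordinary monomials via the Chebyshev product-to-sum identity. By \eqref{prodsum}, each $J_i^k$ is an integer polynomial in $T_1(J_i), \ldots, T_k(J_i)$ and conversely, so the $\BC$-span of $\{T_k(J_i) : k \in \BN\}$ coincides with the $\BC$-span of $\{J_i^k : k \in \BN\}$ inside $\KF$, for each $i$. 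Consequently, it suffices to show that the ordered monomials $J_1^{k_1} J_2^{k_2} \cdots J_s^{k_s}$, with $(k_1, \ldots, k_s) \in \BN^s$, span $\KF$ as a $\BC$-vector space.

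I would then prove this spanning statement by induction on a complexity $c(\al)$ assigned to each simple diagram $\al \in \sS$, built from its Dehn-Thurston coordinates $(m_i(\al), t_i(\al))_{i=1}^{p}$; for instance, one can take $c(\al)$ to be the tuple $(m_1, \ldots, m_p, |t_1|, \ldots, |t_p|)$ with lexicographic order. The base case is $c(\al) = 0$: here $\al$ is a disjoint union of curves from $\cP$ together with peripheral loops, so $\al$ is already an ordered monomial in the $J_i$. For the inductive step, given $\al$ with coordinates $(m_i, t_i)_i$, one chooses exponents $k_1, \ldots, k_s$ so that the ordered product $M = J_1^{k_1} \cdots J_s^{k_s}$, expanded by \eqref{eq.prod}, has a unique smooth resolution of maximal complexity equal to $\al$ up to a nonzero scalar, while every other resolution $M'_\sigma$ has strictly smaller complexity and so lies in the span by induction. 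Thus $\al$ itself lies in the span; combining this with the first reduction and the standard presentation \eqref{eq.pres} of an arbitrary skein yields the theorem.

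The main obstacle will be the pants-by-pants local analysis needed to justify the leading-order claim in the inductive step. Concretely, one needs that inside each pair of pants $\Sigma$ with boundary $P_i, P_j, P_k$, the simple pattern in $\Sigma$ realizing a given triple $(m_i, m_j, m_k)$ of boundary intersection numbers is essentially unique when these numbers satisfy all three triangle inequalities, and that the prescribed ordered product of $P_\bullet$ and $D_\bullet$ factors really does produce this pattern as its top-complexity smooth resolution via \eqref{eq.prod}, with all other resolutions dropping at least one $m_i$ or shrinking $|t_i|$. When the triangle inequalities fail, $\al$ must contain arcs isotopic to a boundary component of $\Sigma$, which have to be peeled off as explicit factors $P_j^{\,k}$ before applying the argument; Theorem \ref{twistedpants} is tailored precisely to reduce any given finite collection of diagrams to the triangular situation and may be invoked here, or alternatively one can carry out the peeling directly using the skein relations \eqref{KBSR}--\eqref{KBSR2} inside a single pair of pants.
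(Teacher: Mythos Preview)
The paper does not prove this theorem; it is quoted from \cite{AF2} and used as a black box. So there is no in-paper argument to compare against. That said, your outline has real gaps that would need to be closed before it becomes a proof.

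First, your treatment of the non-triangular case is incorrect. When the intersection numbers $(m_i,m_j,m_k)$ of $\al$ with the boundary of a pair of pants $\Sigma$ violate a triangle inequality, the restriction $\al\cap\Sigma$ does \emph{not} contain closed curves parallel to a boundary component that could be ``peeled off as explicit factors $P_j^{\,k}$''. Rather, it contains arcs with both endpoints on a single boundary circle---the curves $u_i$ of Figure~\ref{sameends}---and these are essential arcs that wrap around one of the other boundary components. They are not produced as leading terms of products of pants curves and a single dual-curve system in any obvious way, and your proposed ``peeling'' does not apply to them. Relatedly, invoking Theorem~\ref{twistedpants} here is illegitimate: that theorem straightens a \emph{finite} collection of diagrams with respect to some pants decomposition, but your generators $J_1,\dots,J_s$ must be fixed once and for all, and the spanning claim concerns \emph{all} simple diagrams, infinitely many of which will be non-triangular for any fixed $(\cP,\cD)$.

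Second, the ``dual curve system $\cD=\{D_1,\dots,D_p\}$'' is not well-specified. A simple closed curve meeting exactly one $P_i$ in a single point need not exist (e.g.\ when $P_i$ is separating every curve meets it evenly). Even if you replace these by curves meeting each $P_i$ twice, as the curve $\Omega$ of Definition~\ref{omega} does, you still have to supply the local analysis you flag as ``the main obstacle'': that some ordered monomial in your $J_i$ has a unique top-complexity resolution equal to a prescribed $\al$, including the non-triangular $\al$. This is exactly the part where the work lies, and your sketch does not indicate how to carry it out. At minimum you need to enlarge the generating set so that the non-triangular model arcs $u_i$ can be realized as leading terms, and then do the case analysis.
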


Another important property of $\KF$ is the following.
\begin{theorem} [\cite{BW0,PS2}] \label{thm.zero}
For any finite type surface $F$ and any non-zero complex number $\zeta$, the skein algebra $\KF$ does not have non-trivial zero divisors, i.e. if $xy=0$ in $\KF$, then $x=0$ or $y=0$.
\end{theorem}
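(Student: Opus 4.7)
The plan is to use a coordinate system on simple diagrams to define an order on the basis $\sS$, and then show that the leading simple diagram of a product of two standard presentations is non-zero. I treat separately the case when $F$ has at least one puncture, where an ideal triangulation provides clean coordinates, and the closed case, where Dehn-Thurston coordinates based on a pants decomposition play the analogous role.

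First suppose $F$ has at least one puncture and fix an ideal triangulation $\Delta=\{e_1,\dots,e_m\}$. Each simple diagram $\al\in\sS$ carries an edge-coordinate vector $\br(\al)=(i(\al,e_1),\dots,i(\al,e_m))\in\BN^m$, and this vector determines $\al$ up to isotopy. Order $\sS$ by pulling back the lex order on $\BN^m$. The local step is a ``preferred smoothing'' lemma: for simple diagrams $\al,\beta$ put in minimal position, exactly one smoothing $\sigma_*$ of $\al\cup\beta$ appearing in \eqref{eq.prod} yields a simple diagram whose edge vector equals $\br(\al)+\br(\beta)$, while every other smoothing yields a strictly smaller edge vector. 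At each crossing, one resolution keeps all strands crossing each nearby edge $e_j$ as before, whereas the other creates a bigon with some $e_j$ that isotopes off and reduces $i(\cdot,e_j)$. The preferred smoothing contributes the coefficient $\zeta^{|\sigma_*|}(-\zeta^2-\zeta^{-2})^{l(\sigma_*)}$, a nonzero complex number provided $\zeta\neq\pm\mathbf i$; the cases $\zeta=\pm\mathbf i$ can be handled by a parity-of-crossings adjustment or deduced from Theorem \ref{thm.Marche}.

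Granting this lemma, suppose $x=\sum_i c_i\al_i$ and $y=\sum_j d_j\beta_j$ are nonzero standard presentations with lex-largest basis elements $\al_0$ and $\beta_0$. Every simple diagram appearing in the expansion of $\al_i\beta_j$ has edge vector at most $\br(\al_i)+\br(\beta_j)$, which is strictly smaller than $\br(\al_0)+\br(\beta_0)$ unless $(i,j)=(0,0)$. Hence the unique basis element of edge vector $\br(\al_0)+\br(\beta_0)$ in $xy$ comes from $c_0 d_0\,\al_0\beta_0$ with a nonzero scalar coefficient, and $xy\neq 0$. Equivalently, this argument realizes $\KF$ as a subalgebra of the quantum torus associated to $\Delta$, which is visibly a domain.

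For the closed case, I will replace $\Delta$ by a pants decomposition $\cP$ together with a dual graph $\cD$, and parametrize $\sS$ by the corresponding Dehn-Thurston coordinates. By Theorem \ref{twistedpants}, for any finite collection of simple diagrams I can choose $(\cP,\cD)$ so that all of them are triangular, which removes the degenerate case of a curve isotopic to some $P\in\cP$. A preferred smoothing analysis analogous to the punctured case, carried out pair-of-pants by pair-of-pants, then shows that the leading simple diagram in a product of two standard presentations is again the product of the leading terms with nonzero coefficient. The hard part here is precisely this leading-term lemma: Dehn-Thurston coordinates include signed twist parameters, and the effect of a smoothing on them is subtle because extra positive or negative Dehn twists can be introduced depending on the local configuration. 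Tracking this carefully requires the standard model of a simple diagram in each pair of pants together with careful bookkeeping of sign conventions; once the local computation is in place, the zero-divisor conclusion follows by the same leading-term argument as in the punctured case.
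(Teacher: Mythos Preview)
The paper does not actually prove this theorem; the passage following the statement only cites \cite{BW0} for the punctured case (via the quantum trace embedding into a quantum torus) and \cite{PS2} for the general case. Your sketch is essentially a reconstruction of the arguments in those references.

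For the punctured case your outline is correct, and in fact the paper itself records the needed lemma later as Theorem~\ref{lead} and Proposition~\ref{r.com}: the lead term of $\al\beta$ is $\zeta^{q_\al\cdot f_\beta}[f_\al+f_\beta]$. Note that the coefficient is a pure power of $\zeta$, with no factor $(-\zeta^2-\zeta^{-2})^{l(\sigma_*)}$; the maximal resolution never produces trivial loops, so your separate worry about $\zeta=\pm\mathbf i$ is unnecessary and the invocation of Theorem~\ref{thm.Marche} can be dropped. Your local description (``one resolution keeps all strands crossing each nearby edge, the other creates a bigon'') is also a bit loose: the correct picture, as in the proof of Proposition~\ref{r.com}, groups the crossings into bigons around each edge, and within a bigon all resolutions must carry the same sign to stay maximal.

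For the closed case your route differs from that of \cite{PS2}. That paper fixes a single pants decomposition and proves that the \emph{entire} associated graded algebra $\cB$ is a domain, which then forces $\KF$ to be a domain; this requires handling non-triangular diagrams as well. You instead invoke Theorem~\ref{twistedpants} to choose $(\cP,\cD)$ adapted to the particular $x$ and $y$ at hand, so that $\ld(x),\ld(y)$ land in the triangular subalgebra $\Bt\subset\cB$, and then you only need $\Bt$ to be a domain. That is a legitimate and somewhat cleaner shortcut. The product formula you need in $\Bt$ is exactly Proposition~\ref{r.PS2}, equation~\eqref{eq.com1u}, which exhibits $\Bt$ as a monomial subalgebra of a quantum torus. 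What you label ``the hard part'' and leave open is precisely the content of that proposition, so your sketch is on the right track but not self-contained.
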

For the case when $F$ has at least one punctures and is triangulable (see below), this fact follows from the existence of the quantum trace map \cite{BW0} which embeds $\KF$ into a quantum torus which does not have non-trivial zero-divisors. For all finite type surfaces the theorem was proved in \cite{PS2}.

\subsection{Ideal triangulations and parametrization of simple diagrams} 
An {\bf ideal triangle} is the result of removing three points from the boundary of an oriented disk.  The three open intervals that are the components of the boundary are the {\bf sides} of the ideal triangle. An {\bf ideal triangulation} of a finite type surface  is given by a collection $\{\Delta_i$\} of ideal triangles whose sides have been identified in pairs to obtain a quotient space $X$, along with a homeomorphism $h:X\rightarrow F$. For each triangle there is an inclusion map $\Delta_i\rightarrow X$.  If the inclusion map is not an embedding then there are two sides of $\Delta_i$ that are identified to each other and the triangle is {\bf folded}. The images of the sides of the ideal triangles in $F$ are called the {\bf edges of the triangulation}.
 A {\bf triangulable surface} is  a surface which has an ideal triangulation. It is known that a finite type surface is triangulable if and only if it has at least one puncture and it is not  the sphere with one or two punctures. A {\bf triangulated finite type surface} is a finite type surface equipped with an ideal triangulation.
 If  $F$ is triangulable, then  it also has  triangulations with no folded triangles, and we  will only work with such triangulations.

Suppose $F$ is a triangulated surface. 
 Denote the set of edges by $E$. 
Then
$|E|=-3e$,  where $e$ is the Euler characteristic of $F$. 
A map from $E$ to $\BZ$ is called an {\bf edge-coloring}, and the set of all such maps is denoted by $\BZ^E$. An
edge-coloring $f$ is {\bf admissible} if $f \in \BN^E$, i.e. $f(a)\ge 0$ for  $a\in E$, and
whenever $a,b,c$ are three distinct edges  of an ideal triangle, 
\begin{itemize}
\item the sum $f(a)+f(b)+f(c)$ is even,
\item the integers $f(a),f(b),f(c)$  satisfy the triangle inequality, i.e. each is less than or equal to the sum of the other two.
\end{itemize}

\def\da{\boldsymbol{\delta}_a}

A simple diagram $\al$ on $F$ defines an admissible edge-coloring $f_\al \in \BZ^E$ by
$ f_\al(a) = i(\al,a).$
The map $\al \to f_\al$ is a bijection between the set $\sS$ of all isotopy classes of simple diagrams and the set of all admissible edge-colorings. For details, see e.g.~\cite{Matveev}.

While $\BZ^E$ is a $\BZ$-module, the subset of all admissible edge-colorings is not a $\BZ$-submodule of $\BZ^E$, because the values of  an admissible edge-coloring are non-negative.

 \begin{prop} \label{r.ev1} Suppose $F$ is a triangulated finite type surface, with $E$ the set of all edges. Then
 the $\BZ$-span of admissible edge-colorings contains $2\, \BZ^E$.

 \end{prop}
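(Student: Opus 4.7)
The plan is to reduce to generators: since $\BZ^E$ is the free abelian group with basis $\{\da : a \in E\}$, to show that $2\,\BZ^E$ lies in the $\BZ$-span of admissible edge-colorings it is enough to exhibit, for each $a\in E$, two admissible edge-colorings whose difference equals $2\da$. I would therefore look for a single ``background'' admissible coloring with enough slack that I can increase its value at any prescribed edge by $2$ and stay admissible.

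The natural candidate is the constant coloring $g(e)=2$ for every $e\in E$, together with its modification $g_a := g + 2\da$, which takes the value $4$ at $a$ and the value $2$ everywhere else. For $g$, every triangle carries the triple $(2,2,2)$, whose sum is even and which satisfies the triangle inequality. For $g_a$, triangles not meeting $a$ are untouched; any triangle that does contain $a$ has edge-values $(4,2,2)$ in some order, whose sum $8$ is even and whose triangle inequality $4 \le 2+2$ holds with equality. Granting these two admissibility checks, $2\da = g_a - g$ is exhibited as a $\BZ$-linear combination of admissible edge-colorings, and letting $a$ vary over $E$ yields every element of $2\,\BZ^E$.

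The one point that needs attention rather than routine verification is the tight triangle inequality for $g_a$ at a triangle containing $a$. This works only because the other two edges of such a triangle really are distinct from $a$ (so that they keep value $2$ and combine to $2+2=4$); this is where I use the standing assumption, recorded earlier in this section, that only ideal triangulations with no folded triangles are considered. Since folding is precisely the phenomenon that would identify two sides of a single triangle to one edge, its exclusion guarantees that each triangle carrying $a$ has two other genuinely distinct edges, which is exactly what makes the saturated case $(4,2,2)$ legal and the whole argument go through.
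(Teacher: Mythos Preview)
Your argument is correct. The constant coloring $g\equiv 2$ is admissible because every triangle carries the triple $(2,2,2)$, and your perturbed coloring $g_a=g+2\da$ is admissible because, in the absence of folded triangles, every ideal triangle has three \emph{distinct} edges, so any triangle containing $a$ carries the triple $(4,2,2)$ with $4\le 2+2$. Hence $2\da=g_a-g$ lies in the $\BZ$-span of admissible colorings, and since the $\da$ generate $\BZ^E$ you are done. You correctly identified the one delicate point, namely that the equality case $4=2+2$ would fail if $a$ could occur twice in a single triangle; the standing ``no folded triangles'' hypothesis rules this out.

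The paper takes a different, more geometric route. Rather than writing down abstract admissible colorings, it produces explicit simple diagrams: for each edge $a$ it uses peripheral loops around the endpoints of $a$ together with the boundary $\al$ of a tubular neighborhood of $a$ in $\bF$, and checks the identity $2\da=f_{\al_1}+f_{\al_2}-f_\al$ (or $2\da=f_{\al'}-f_\al$ when the two ends of $a$ coincide). Your approach is more elementary and self-contained, working directly with the combinatorial definition of admissibility and avoiding any case split on whether the endpoints of $a$ are distinct. The paper's approach, on the other hand, names the actual curves realizing these colorings; for instance your background coloring $g\equiv 2$ is the edge-coloring of the union of all peripheral loops. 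Either argument is perfectly adequate here.
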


 \begin{proof} For an edge $a\in E$ let $\da\in \BZ^E$ be the edge-coloring defined by $\da(a)=1$ and $\da(b)=0$ for all $b \neq a$. We need to show that $2\da$ is a $\BZ$-linear combination of admissible edge-colorings.

 Recall that $F = \bF \setminus \cV$, and the ideal triangulation of $F$ has the set of vertices   $\cV$.
 Suppose first $a$ has two distinct endpoints $v_1, v_2\in \cV$. Let $\al$ be the boundary of a small tubular neighborhood of $a$ in $\hF$, and  let $\al_i$ be the boundary of a small disk in $\hF$ containing $v_i$, for $i=1,2$. Then it is easy to check that $2\da= f_{\al_1} + f_{\al_2} -f_{\al}$.

 Now suppose the two ends of $a$ meet at one point $v$. Let $\al$ be the boundary of a small tubular neighborhood of $a$ in $\hF$, and $\al'$ be the boundary of a small disk in $\hF$ containing $v$. Then $2\da=f_{\al'}-f_{\al}$.    \end{proof}
 
Instead of the edge coordinates $f_\al$ sometimes it is convenient to use the corner coordinates $g_\al$ described as follows. A {\bf corner} of a triangulation of $F$ consists of a triangle $T$ of the triangulation and an unordered pair $\{a,b\}$ of distinct edges of $T$. For a simple diagram $\al$ the corner number $g_\al(T, \{a,b\})$ is defined by
\begin{equation}
g_\al(T,\{a,b\})         =\frac{f_\al(a)+f_\al(b)-f_\al(c)}{2},
\label{corner}
\end{equation}
where $c$ is the remaining edge of $T$. The  geometric meaning of the corner number is as follows. After an isotopy we can assume 
  $\al$ is in  {\bf normal position}, that is,  $|\al \cap a|=i(a,\al)$ for all $a\in E$. 
 Then $\al$ intersects $T$ in a collection of arcs, each has its endpoints lying in two distinct sides of the triangle. The number of such arcs with endpoints in $a$ and $b$ is the corner number $g_\al(T,\{a,b\})$, see Figure \ref{normtri}.

\begin{figure}[H]\begin{center}\scalebox{0.7}{\begin{picture}(100,60)\includegraphics{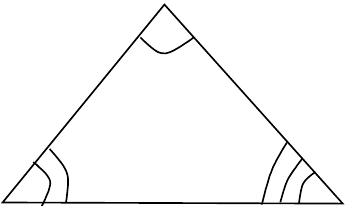}\put(-90,30){$a$}\put(-52,-10){$b$}\put(-20,30){$c$}\end{picture}}\end{center}\caption{Diagram in Normal Position, with $g_\al(T,\{a,b\})=2$.}\label{normtri}\end{figure}

The admissibility of $f_\al$ translates to the condition: for every edge $e$,
\be \label{eq.corner1}
g_\al(c_3) - g_\al(c_1) = g_\al(c_2)- g_\al(c_4),
\ee 
 where $c_1, c_2, c_3, c_4$  are the corners adjacent to $e$ depicted in Figure \ref{fig:Q}. Denote the value of both sides of  \eqref{eq.corner1} by $q_\al(e)$,
 \be \label{eq.q}
 q_\al(e)= g_\al(c_3) - g_\al(c_1)= (f_\al(a)- f_\al(b) + f_\al(c) - f_\al(d))/2,
 \ee
 where $a,b,c,d$ are the edges of the two triangles having $e$ as an edge, as depicted in Figure \ref{fig:Q}. Note that there might be some coincidences among $a,b,c,d$.
\FIGc{Q}{Edge $e$ with adjacent corners (left) and surrounding edges (right).}{2cm}
 
 Let $C$ be the set of corners of the ideal triangulation of $F$. An {\bf admissible corner-coloring}  is a function
$ g:C\rightarrow \mathbb{N}$ satisfying the condition \eqref{eq.corner1} (with $g$ in place of $g_\al$). 

 The map $f_\al \to g_\al$ is   
an  $\BN$-linear bijection between admissible corner-colorings  and admissible edge-colorings of the edges. For details, see e.g. \cite{Matveev}.

\def\ld{\mathrm{lt}}

The set $\sS$ of isotopy classes of simple diagrams can be ordered as follows. 
\begin{definition}\label{wellorder}
Choose a total order on the set of edges $E$, then order $\BZ^E$ lexicographically. This induces a total order on the set $\sS$ via the bijection $\al\to f_\al$. If $\al=\sum_{j\in J}c_j \al_j$ is the standard presentation \eqref{eq.pres} of a non-zero $\al\in \KF$, then the {\bf lead term} $\ld(\alpha)$ of $\alpha$ is $ c_i \al_i$, where $\al_i$ is the largest among all $\al_j$.
\end{definition}

It was proved in \cite{AF2} that the lead terms behave well with respect to multiplication.

 \begin{theorem}[\cite{AF2}]\label{lead}  Suppose $F$ is a triangulated finite type surface and $\zeta$ is a non-zero complex number.
 
 (a) If $x,x'$ are non-zero elements in $\KF$, then
\be \label{eq.lead}
\ld(x\,  x')= \ld \big( \ld (x) \, \ld (x')\big),
\ee

(b) If $\al,\beta\in \sS$ then there is $k\in \BZ$
 such that 
 $
 \ld(\al\beta)= \zeta^{k} [f_\al + f_\beta]$, where  and $[f]$ is the simple diagram whose edge-coloring is $f$.

  \end{theorem}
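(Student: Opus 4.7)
My plan is to prove part (b) first by direct geometric computation of $\ld(\al \beta)$, and then deduce part (a) from (b) via a bilinearity argument that uses compatibility of lex order on $\BZ^E$ with coordinate-wise addition.

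To prove (b), I would isotope $\al$ and $\beta$ into normal position with respect to the triangulation, stack $\al$ above $\beta$, and arrange them generically so that every crossing of $D = \al \cup \beta$ lies in the interior of a triangle (in particular, off every edge). Expansion via \eqref{eq.prod} gives
\[
\al \beta = \sum_{\sigma: \cC \to \{\pm 1\}} \zeta^{|\sigma|} (-\zeta^2 - \zeta^{-2})^{l(\sigma)}\, D_\sigma'.
\]
Since every crossing is away from every edge $e$, each smoothing preserves $|D_\sigma \cap e| = f_\al(e) + f_\beta(e)$. Removing trivial components (which bound disks in $F$ and hence meet $e$ in an even non-negative number of points) and then passing to the geometric intersection number can only decrease this count, giving $f_{D_\sigma'}(e) \le f_\al(e) + f_\beta(e)$ for every edge $e$. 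Thus $f_{D_\sigma'} \le f_\al + f_\beta$ coordinate-wise, and a fortiori in lex order. I would then single out a distinguished ``parallel'' resolution $\sigma_0$, obtained by resolving each crossing in the way compatible with the stacking direction, so that in each triangle the resolved arcs form the (up-to-isotopy unique) parallel family realizing the corner counts $g_\al + g_\beta$. For $\sigma_0$ the resolved diagram has no trivial components and equals the simple diagram $[f_\al + f_\beta]$, so its contribution is exactly $\zeta^{|\sigma_0|}[f_\al + f_\beta]$. Every other $\sigma$ produces a strictly smaller $f_{D_\sigma'}$, yielding $\ld(\al \beta) = \zeta^{k}[f_\al + f_\beta]$ with $k = |\sigma_0|$.

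The main obstacle is the triangle-by-triangle local argument identifying $\sigma_0$ as the unique equality-achieving resolution. In each triangle, the arcs of $\al$ and $\beta$ must be placed in a standard pattern so that every $\al$-$\beta$ crossing admits exactly one smoothing extending the parallel family and one creating a local U-turn; the U-turn either bounds a trivial disk inside the triangle (removed in passing from $D_\sigma$ to $D_\sigma'$ and strictly lowering $|D_\sigma' \cap e|$ on some adjacent edge by at least $2$) or creates a bigon with an edge (strictly lowering $i(D_\sigma',e)$). Carrying out this local analysis cleanly, and verifying that a non-parallel choice at even a single crossing produces a strict drop at some edge, is the technical heart of (b).

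For part (a), write standard presentations $x = \sum_j c_j \al_j$ and $x' = \sum_i d_i \al_i'$ with $\ld(x) = c_{j_0}\al_{j_0}$ and $\ld(x') = d_{i_0}\al_{i_0}'$. By bilinearity $x x' = \sum_{j,i} c_j d_i\, \al_j \al_i'$, and by (b) each $\al_j \al_i'$ has leading simple diagram $[f_{\al_j} + f_{\al_i'}]$ with nonzero scalar coefficient $c_j d_i \zeta^{k_{j,i}}$. Since the lex order on $\BZ^E$ is translation-invariant ($f > f'$ implies $f + g > f' + g$), the value $f_{\al_{j_0}} + f_{\al_{i_0}'}$ is the \emph{unique} lex-maximum of the family $\{f_{\al_j} + f_{\al_i'}\}_{j,i}$, so its contribution cannot be cancelled by any other summand. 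Hence $\ld(xx') = c_{j_0}d_{i_0}\ld(\al_{j_0}\al_{i_0}') = \ld(\ld(x)\,\ld(x'))$, which proves (a).
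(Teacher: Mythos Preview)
Your deduction of (a) from (b) via translation-invariance of the lexicographic order on $\BZ^E$ is correct and standard. For (b), the paper does not give its own proof of Theorem~\ref{lead} (it cites \cite{AF2}), but immediately afterwards it proves the sharper Proposition~\ref{r.com}, whose proof contains a complete argument for (b) with a different geometric setup than yours. You place all crossings in the interiors of the ideal triangles and argue triangle-by-triangle that a unique ``parallel'' resolution $\sigma_0$ realizes $[f_\al+f_\beta]$; this works, but, as you acknowledge, the local analysis is awkward because the arc-endpoints on each edge are shared by the two adjacent triangles, so the arrangements in neighbouring triangles cannot be chosen independently, and with several $\al$- and $\beta$-arcs of different corner types present the identification of $\sigma_0$ and the check that every other $\sigma$ forces a bigon or a trivial component require genuine bookkeeping. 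The paper instead fattens each edge $e$ to a bigon $B_e$ and isotopes so that in every triangle the $\al$-arcs lie strictly closer to the ideal vertices than the $\beta$-arcs; then the triangles contain \emph{no} crossings at all and every crossing sits in some $B_e$. In $B_e$ the $\beta$-arcs are horizontal and the $\al$-arcs split into horizontals plus a block of $|q_\al(e)|$ parallel slanted arcs, all of the same slope, so the unique maximal resolution is transparently the one assigning the same sign to every crossing in $B_e$, and one reads off $k=q_\al\cdot f_\beta$ directly. Both routes succeed; the bigon decoupling trades your triangle case-analysis for an essentially trivial local picture and yields the explicit value of $k$ for free.
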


The number $k$ in part (b) can be calculated explicitly, as in the following statement.

\begin{prop} \label{r.com} 
Suppose $\al,\beta\in \sS$. Let $q_\al\cdot f_\beta$ be the dot product. One has
\begin{align} \label{eq.6t}
\ld(\al \beta) &=  \zeta^{q_\al \cdot f_\beta}\, [f_\al + f_\beta]
\\
\label{eq.6s}
\ld(\al\beta) & = \zeta^{2(q_\al\cdot f_\beta)}\, \ld(\beta\al),\\
i_2(\al,\beta)&= q_\al\cdot f_\beta \pmod 2.
\label{eq.itwo}
\end{align}

\end{prop}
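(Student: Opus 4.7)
The plan is to establish \eqref{eq.6t} directly from the Kauffman skein expansion, and then to deduce \eqref{eq.6s} and \eqref{eq.itwo} as consequences. By Theorem~\ref{lead}(b) we already know $\ld(\al\beta) = \zeta^k [f_\al + f_\beta]$ for some $k \in \BZ$; only the identification $k = q_\al \cdot f_\beta$ remains.

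First I would place $\al$ and $\beta$ in simultaneous normal position with respect to the chosen ideal triangulation, with $\al$ above $\beta$, so that every crossing of $\al \cup \beta$ sits inside some triangle. Applying the skein relation \eqref{KBSR} at each crossing yields $2^n$ resolutions (where $n$ is the number of crossings), each producing a crossingless diagram with the same edge-coloring $f_\al + f_\beta$, since smoothings do not change transverse intersection numbers with the edges. Resolutions that create trivial loops strictly decrease the resulting simple diagram in the lexicographic order, so the lead term $[f_\al + f_\beta]$ is contributed only by the loop-free resolutions. A local analysis in each triangle shows there is a unique coherent choice of smoothings — determined by the way the $\al$-arcs and $\beta$-arcs are interleaved along the sides of the triangle — producing this loop-free simple diagram, and each crossing in this coherent choice contributes either $\zeta$ or $\zeta^{-1}$ according to a sign dictated by the arc types at that crossing.

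The exponent $k$ is therefore the signed sum of $\pm 1$'s over all crossings under the coherent choice. Reorganizing this sum by edges rather than by triangles, the contribution from crossings near each edge $e$ factors precisely as $q_\al(e)\, f_\beta(e)$, where the alternating signs appearing in the formula \eqref{eq.q} for $q_\al(e)$ match the orientations of the coherent resolutions on the two triangles adjacent to $e$. Summing over edges yields $k = q_\al \cdot f_\beta$, establishing \eqref{eq.6t}.

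Identity \eqref{eq.6s} then follows by applying \eqref{eq.6t} to both $\al\beta$ and $\beta\al$, giving $\ld(\al\beta) = \zeta^{q_\al\cdot f_\beta - q_\beta \cdot f_\al} \ld(\beta\al)$; it remains to prove the antisymmetry $q_\beta \cdot f_\al = -\, q_\al \cdot f_\beta$, which follows from the explicit formula \eqref{eq.q} by expanding both sides as double sums over (edge, adjacent-triangle-edge) pairs and observing that the corresponding terms cancel pairwise thanks to the alternating signs. Identity \eqref{eq.itwo} is then a parity statement: since $k$ is a signed sum of $\pm 1$'s with one summand per crossing, $k \equiv |\al \cap \beta| \pmod 2$, and the parity of transverse intersection is the homological invariant $i_2(\al,\beta)$. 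The main technical obstacle is the local triangle analysis — verifying that the loop-free coherent choice of smoothings is unique and that its crossing-by-crossing sign contributions reorganize cleanly into the edge-indexed product $q_\al(e)\, f_\beta(e)$ once the whole surface is assembled.
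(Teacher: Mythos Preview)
Your outline diverges from the paper's proof in one key organizational choice: you keep all crossings inside the triangles, whereas the paper first thickens each edge $e$ into a bigon $B_e$ and isotopes $\al,\beta$ so that in every triangle the $\al$-arcs lie closer to the vertices than the $\beta$-arcs. This pushes \emph{all} crossings into the bigons, with none left in the triangles. In each $B_e$ the $\beta$-arcs become $f_\beta(e)$ horizontal strands, while the $\al$-arcs split into two horizontal groups and a middle group of exactly $|q_\al(e)|$ slanted strands whose slope sign is that of $q_\al(e)$. Every slanted $\al$-arc meets every $\beta$-arc once, so $B_e$ contributes $|q_\al(e)|\, f_\beta(e)$ crossings, all resolved with the same sign to obtain the maximal diagram; the signed count is $q_\al(e) f_\beta(e)$ on the nose, and summing over edges gives \eqref{eq.6t} directly. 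Identity \eqref{eq.itwo} is then just the mod~2 count of crossings in the bigons.

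Your triangle-based route is not wrong in spirit, but the step you yourself flag as ``the main technical obstacle'' --- checking that the coherent resolution is unique and that the triangle-by-triangle signed count reorganizes into $\sum_e q_\al(e) f_\beta(e)$ --- is precisely what the bigon maneuver accomplishes for free. One inaccuracy to note: it is not the creation of a trivial loop that lowers a resolved diagram in the order; what matters is whether the smoothed diagram forms a bigon with some edge, so that its edge-coloring drops after isotopy to normal position. In the paper's bigon picture the slope of the slanted arcs forces the good resolution in each $B_e$ and visibly produces no such edge-bigons, so uniqueness of the maximal resolution is immediate rather than a separate combinatorial lemma. Your derivations of \eqref{eq.6s} via the antisymmetry $q_\beta\cdot f_\al = - q_\al\cdot f_\beta$ and of \eqref{eq.itwo} via parity match the paper's.
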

\begin{proof} Identity \eqref{eq.6t} can be proved easily  by taking the lead part of the Bonahon-Wong quantum trace map \cite{BW0,Le:qt}. Here we present an alternative, elementary proof. 

\FIGc{bigon1}{Turn edge $e$ into bigon $B_e$. Do this to all edges.}{2cm}

Enlarge each edge $e$ to make a bigon $B_e$, see Figure \ref{fig:bigon1}. Now $F$ is the union of ideal triangles and bigons. Isotope $\al$ and $\beta$ so that in each triangle the arcs of $\al$ are closer to the vertices than the arcs of $\beta$ and hence they do not intersect. This means $\al$ and $\beta$ intersect only in the bigons. Let us look at a bigon $B_e$, see Figure \ref{fig:bigon2}, where we assume that $e$ is drawn vertical.

\FIGc{bigon2}{$\al$ is  red, $\beta$ is blue.  Each line represents a set of parallel arcs.}{3cm}

 In Figure \ref{fig:bigon2}, arcs of $\al$ are  red and arcs of $\beta$ are  blue. The arcs of $\beta$ in $B_e$ are all horizontal, and there are $f_\beta(e)$ of them. The arcs of $\al$ in $B_e$ have 3 groups: the top and the bottom ones are horizontal, and the middle ones are slanted. It is easy to show that there are $|q_\al(e)|$ number of slanted arcs. Only slanted arcs intersects the arcs of $\beta$. The slanted arcs have positive or negative slope according as $q_\al(e) <0$ or $q_\al(e) >0$. If $q_\al(e)=0$ then there are no slanted arcs. In $B_e$, arcs of $\al$ intersects arcs of $\beta$ in $f_\beta(e) |q_\al(e)|$ number of points, as every slanted arc intersects every arc of $\beta$. It follows that modulo 2, the number of intersection points is $q_\al \cdot f_\beta$, proving \eqref{eq.itwo}.
 
 Now resolve the crossing of the product $\al \beta$, presented as $\al\cup \beta$ with $\al$ above $\beta$. To get the maximal diagram all the resolutions must be positive or all must be negative according as $q_\al(e) >0$ or $q_\al(e) < 0$. It follows that
\be 
\ld(\al \beta) = \zeta^{q_\al\cdot f_\beta} [f_\al + f_\beta].
\ee
Now calculate the product $\beta \al$ using $\beta \cup \al$ with $\beta$ above $\al$, to get
\be 
\ld(\beta\al) = \zeta^{-q_\al\cdot f_\beta} [f_\al + f_\beta].
\ee
From the above two identities we get \eqref{eq.6s}.
\end{proof}
\def\bd{\boldsymbol{\delta}} 

\begin{remark} If $(\sigma_{ab})_{a,b\in E}$ is the anti-symmetric matrix defined  in \cite{BW0} (which is also
 the face matrix $Q$ in \cite{Le:qt}), then   $q_\al (a)= \frac 12 \sum_{b\in E} \sigma_{ab} f_\al(b)$. Hence $q_\al\! \cdot\! f_\beta=-\frac 12 \sum_{a, b} \sigma_{ab} f_\al(a) f_\beta(b)$ is an anti-symmetric bilinear form on $\BZ^E$. Identity \eqref{eq.6t} shows that the algebra of lead terms maps into the quantum torus defined by the anti-symmetric form $\sigma$, which is a version of the Chekhov-Fock algebra. The quantum trace map is an algebra map extending this one to all lower terms, see details in \cite{BW0,Le:qt}.
\end{remark}

\subsection{Dehn-Thurston coordinates}\label{DehnTh} Suppose $F$ is a closed oriented surface of genus $g>1$. Since $F$ does not have an ideal triangulation, edge-coordinates cannot be defined. Instead the Dehn-Thurston coordinates are usually used. There are several version of the Dehn-Thurston coordinates. Here we  follow the presentation of \cite{LS}.

Recall that a {\bf pants decomposition} of a surface $F$ is a collection  $\cP$ of disjoint simple closed curves on $F$ such that every component of the complement of their union is a thrice punctured sphere (i.e. a pair of pants). Any pants decomposition of $F$ consists of  $3g-3$ curves.  

We encode the data for Dehn-Thurston coordinates with a  pair $(\cP,\cD)$,
where  $\cP=\{P_i\}_{i=1}^{3g-3}$ is a pants decomposition of $F$, and $\cD$ is an embedded  {\bf dual graph}. The graph $\cD$ has a trivalent vertex in every pair of pants, and  for each simple closed curve $P_i\in \cP$  has an edge intersecting  it transversely in a single point that is disjoint from the remaining curves in $\cP$. Both ends of that edge might be at the same vertex if the closure of the pair of pants is a surface of genus one.
This description is equivalent to the hexagonal decomposition described in  \cite{LS}.

For each curve $P_i\in \cP$ choose an  annulus $A_i\subset F$ having $P_i$ as its core, in such a way that the annuli $A_i$ are disjoint from one another, and that $\cD\cap A_i$ is a single arc.  The closure of each connected component  $c$ of $F -  \cup A_i$   is a pair of pants $S_c$. 
We think of the $S_c$ as {\bf shrunken pairs of pants}, hence the notation. Notice that the boundary components of $S_c$ coincide with the boundary components of some of the annuli. We number the boundary components  $c_1,c_2,c_3$ so that the edges of the dual graph  that intersect $c_i$ are ordered counterclockwise around the vertex. In diagrams we always number the boundary components of a shrunken pair of pants so that $c_1$ is the outer boundary component.

\begin{definition}\label{omega}
Given a pants decomposition  and an embedding of its dual graph $(\cP ,\cD)$ let $\Omega$ be the simple diagram that is isotopic to the boundary of a regular neighborhood of $\cD$.
Figure \ref{omegar} shows   the curves in the pants decomposition, the dual graph, and the diagram $\Omega$ for a surface of genus $2$. \end{definition} 

\begin{figure}
\begin{center}
\scalebox{0.67}{\includegraphics{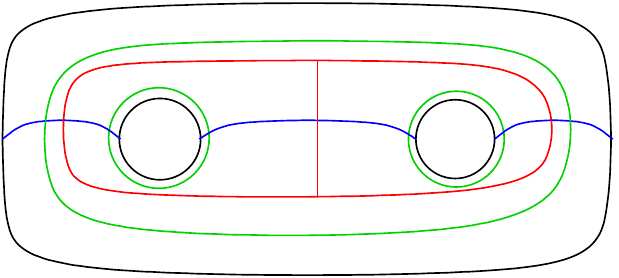}}
\end{center}
\caption{ $\cP$ in blue, $\cD$ in red, $\Omega$ in green}\label{omegar}
\end{figure}

We note that the simple diagram $\Omega$ has geometric intersection number $2$ with each curve in $\cP$.
It intersects each annulus $A_i$ in two arcs that are parallel to the arc of intersection of $\cD$ with $A_i$. The intersection of $\Omega$  with any  shrunken pair of pants $S_c$ consists of three properly embedded connected  $1$-manifolds,  each of which has its endpoints in distinct boundary components of $S_c$.  If $\partial S_c =c_1\cup c_2\cup c_3$ we name these arcs $d_1,d_2,d_3$, where the index of $d_i$ indicates which of the curves $c_i$ is disjoint from $d_i$.
We call the curves $d_i$ the {\bf triangular model curves}, and we picture them in Figure \ref{modeldual}.

\begin{figure}\begin{center}\scalebox{0.6}{\begin{picture}(115,115) \includegraphics{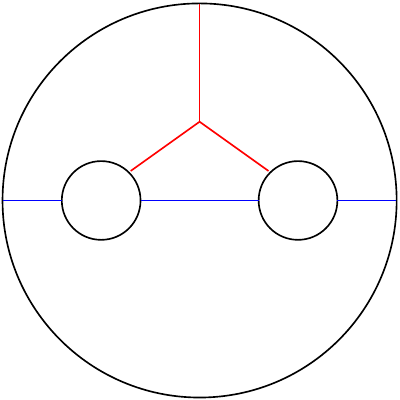}\put(-10,10){$c_1$} \put(-20,40){$c_3$} \put(-80,40){$c_2$}\end{picture}} \end{center}\caption{ $\cD\cap S_c$ in red, model curves $d_i$ in blue}\label{modeldual}\end{figure}
We also need {\bf nontriangular model curves} that have both endpoints in the same boundary component of $S_c$,  missing $\cD\cap\partial S_c$.
For each $c_i$ there are two isotopy classes  of connected properly embedded essential $1$-- manifolds that have both ends in $c_i-\cD$ and intersect $\cD\cap S_c$ transversely in a single point in one of the edges.  We choose  $u_i$ to be the curve that intersects $\cD$ in the edge ending in $c_{(i+1)\mod 3}$, see Figure \ref{sameends}.

\begin{figure}\begin{center}\scalebox{0.60}{\begin{picture}(115,115) \includegraphics{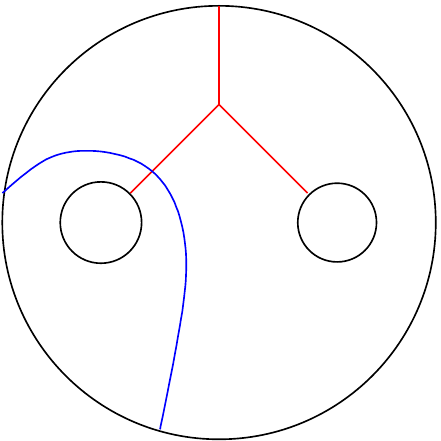}\put(-10,10){$c_1$} \put(-20,40){$c_3$} \put(-80,40){$c_2$}\end{picture}} \end{center}\caption{ Model curve $u_1$ joining the first boundary component to itself}\label{sameends}\end{figure}

\begin{figure}
\begin{center}
\scalebox{0.67}{\includegraphics{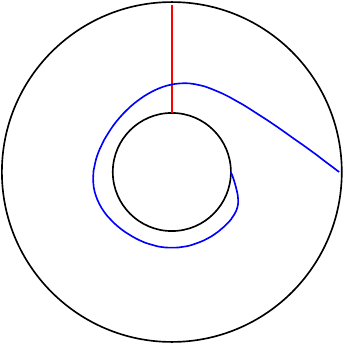}}
\end{center}
\caption{Positive twist}\label{postwist}
\end{figure}

Any simple diagram $S$ on $F$ can be isotoped so that it does not form a bigon with the boundary of any $A_i$, and inside each shrunken pair of pants  $S_c$ the components of $S\cap S_c$ are parallel to the model curves $d_i$ and/or $u_i$, by an isotopy that does not pass through the intersection of the dual graph $\cD$ with the boundaries of the $S_c$'s. We call this {\bf standard position}.

\begin{definition}
The {\bf Dehn-Thurston coordinates} of a simple diagram $S$ on a surface $F$ with respect to a pants decomposition and an embedding of its dual graph $(\cP,\cD)$ are given by 
 \begin{equation}(n_1(S),\dots ,n_{3g-3}(S),t_1(S),\dots ,t_{3g-3}(S)),\end{equation}
 as follows.  Assume that $S$ is in standard position with respect to  $(\cP,\cD)$. The first $3g-3$ coordinates are given by the geometric intersection numbers of the diagram with the curves in 
    $\cP$, i.e.
  \begin{equation}
  n_j(S)=i(S,P_j).
  \end{equation}
  We call these the {\bf pants coordinates} of $S$.
  The following $3g-3$ coordinates are the twisting  numbers of $S$ about the annuli $A_j$, that is 
  \begin{equation}
  |t_j(S)|=i(S\cap A_j, \cD\cap A_j).
\end{equation}
The sign of $t_j(S)$ is given by the following convention: If $S$ is in standard position, the twist number $t_j(S)$ is positive if as you travel along a component of $S\cap A_j$ that intersects $\cD\cap A_j$, passing from one boundary component of  $A_j$ towards the other, you are turning to the right.
In Figure \ref{postwist} the blue arc twists once in the positive direction. The $(t_1(S),\dots ,t_{3g-3}(S))$ are called the {\bf twist coordinates} of $S$.
\end{definition}

We can detect the presence of nontriangular model curves from the pants coordinates.  We say $P_i,P_j,P_k$ {\bf bound a shrunken pair of pants} if they are isotopic to the boundary components of some $S_c$. Notice that two of the curves could coincide if the shrunken pair of pants corresponds to a subsurface of genus $1$.  
\begin{definition}
The tuple  $\bn=(n_1,\dots,n_{3g-3})\in \BN^{3g-3}$  is {\bf  triangular} if for every  $(P_i,P_j,P_k)$ that bounds a shrunken pair of pants,  the sum $n_i+ n_j+ n_k$ is even and each of $n_i, n_j, n_k$ is less than or equal to the sum of the other two.  A simple diagram $S$ is
  {\bf triangular} if its pants coordinates are triangular, and no curves in $S$ are parallel to a curve in $\cP$. 
  
  \end{definition}
 It is worth noting that when a triangular simple diagram is in standard position then it contains no nontriangular model curves.

 \begin{theorem}\label{twistedpants}Given a finite collection of simple diagrams $\mathcal{S}$ on a closed surface $F$ of negative Euler characteristic there exists  a pants decomposition $\cP$ of $F$, and an embedding of the dual graph $\cD$ such that all the diagrams in ${\mathcal{S}}$ are triangular with respect to $(\cP,\cD)$.\end{theorem}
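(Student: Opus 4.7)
The plan is to construct $(\cP,\cD)$ by iteratively modifying an arbitrary starting pants decomposition, using elementary moves to drive the Dehn--Thurston coordinates of every $S\in\mathcal{S}$ into the triangular region. First I would pick any pants decomposition $\cP$ with dual graph $\cD$, and put each $S\in\mathcal{S}$ in standard position. Inside every shrunken pair of pants $S_c$ the components of $S\cap S_c$ are parallel either to one of the triangular model curves $d_1,d_2,d_3$ or to one of the nontriangular model curves $u_1,u_2,u_3$, and $S$ has triangular pants coordinates precisely when no $u_i$-arc of any $S\in\mathcal{S}$ arises in any $S_c$ (equivalently, in every pair of pants the three pants coordinates satisfy the triangle inequalities with even sum).

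Define the complexity $\Lambda(\cP)=\sum_{S\in\mathcal{S}}\sum_{i=1}^{3g-3} n_i(S)$, the total intersection number of $\mathcal{S}$ with $\cP$. The central step is to show that if some $S\in\mathcal{S}$ fails to be triangular, then a single elementary Hatcher--Thurston move on one curve $P_i\in\cP$ strictly reduces $\Lambda$. To produce the move, choose $S\in\mathcal{S}$, a pair of pants $S_c$ and an index $i$ with $n_i(S)>n_j(S)+n_k(S)$; pick a $u_i$-arc $\alpha\subset S\cap S_c$; and let $\Sigma_i$ be the four-holed sphere (or one-holed torus) obtained by gluing $S_c$, the annulus $A_i$, and the pair of pants on the other side of $P_i$. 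Inside $\Sigma_i$, take $P_i'$ to be the simple closed curve obtained by closing $\alpha$ through the opposite side of $P_i$; it meets $P_i$ in the minimum number of points (one or two), is disjoint from $\cP\setminus\{P_i\}$, and by construction satisfies $i(S,P_i')<i(S,P_i)=n_i(S)$, because a slight pushoff of $P_i'$ off $\alpha$ avoids all the transverse crossings that $P_i$ had with the $u_i$-arcs of $S$ in $S_c$. A lexicographic refinement of the complexity, ordered first by the maximum single excess $\max_{S,c}\{n_i(S)-n_j(S)-n_k(S)\}$ and then by $\Lambda$, is then used to guarantee that the move also decreases the contribution from the other $S'\in\mathcal{S}$.

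Since the refined complexity is well-ordered and strictly decreasing, the iteration terminates at some $\cP$ whose pants coordinates for every $S\in\mathcal{S}$ are triangular. To finish, the clause ``no component of $S$ is parallel to a curve in $\cP$'' is handled by a further perturbation: if some $P_i\in\cP$ is isotopic to a component of some $S\in\mathcal{S}$, replace $P_i$ by a non-isotopic nearby curve via one more elementary move inside $\Sigma_i$, chosen close enough in the Farey graph of $\Sigma_i$ to preserve all triangle inequalities. Since the list of forbidden isotopy classes is finite, this can be arranged by finitely many additional moves, and the dual graph $\cD$ is updated compatibly at each step.

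The main obstacle is controlling the global effect of each elementary move: replacing $P_i$ by $P_i'$ changes $i(S',P_i')$ for every $S'\in\mathcal{S}$, not only for the $S$ motivating the move, and a bad choice could create new $u$-arcs for other diagrams elsewhere in the surface. Verifying that the lexicographic complexity really strictly decreases, or equivalently choosing $P_i'$ in the Farey graph of $\Sigma_i$ so that no other $S'\in\mathcal{S}$ acquires new excess in any adjacent pair of pants, is the technical heart of the proof and requires a careful intersection analysis inside $\Sigma_i$ using the classification of simple closed curves in a four-holed sphere or one-holed torus.
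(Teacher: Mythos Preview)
Your proposal sketches a genuinely different strategy from the paper's, but it has a real gap at precisely the point you yourself flag as the ``technical heart.'' You concede that verifying the lexicographic complexity strictly decreases under your chosen elementary move ``requires a careful intersection analysis'' that you have not carried out; as written this is a plan, not a proof. More seriously, there is concrete reason to doubt the step as stated: replacing $P_i$ by $P_i'$ changes $n_i(S')=i(S',P_i')$ for \emph{every} $S'\in\mathcal{S}$, and nothing in your construction prevents $i(S',P_i')$ from being much larger than $i(S',P_i)$ for some other diagram $S'$, creating new and possibly larger excess in the two pairs of pants adjacent to $P_i'$. Since your first lexicographic coordinate is a maximum over all $S$ and all pairs of pants, a single such increase defeats the descent. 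The final perturbation step has the same defect: being close in the Farey graph of $\Sigma_i$ controls intersection with $P_i$, not intersection with the arc systems $S'\cap\Sigma_i$ for the other diagrams, so there is no reason the perturbed pants curve preserves the triangle inequalities already achieved.

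The paper avoids all of this bookkeeping by a dynamical argument. Fix any $(\cP',\cD')$ and observe that the set of strictly positively triangular elements of $P(\Rzero^{3g-3})$ is open and nonempty. By minimality of the mapping class group action on $\PMF$, stable foliations of pseudo-Anosov maps are dense there, so one can choose a pseudo-Anosov $\phi$ whose stable foliation $\sigma_\phi$ has strictly positively triangular projective pants coordinates. Since $\phi^n(J)\to\sigma_\phi$ in $\PMF$ for every simple closed curve $J$, for large $n$ every $\phi^n(S)$ with $S\in\mathcal{S}$ is strictly positively triangular with respect to $(\cP',\cD')$; equivalently every $S\in\mathcal{S}$ is triangular with respect to $(\phi^{-n}(\cP'),\phi^{-n}(\cD'))$. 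This handles all diagrams in $\mathcal{S}$ simultaneously and uniformly, and the strict positivity also disposes of the ``no component parallel to a pants curve'' clause without a separate argument.
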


 \begin{proof}

 Let $\Rzero$ be the set of all non-negative  real numbers. For each positive integer $k$ the  projective space $P(\Rzero^k)$ is the quotient space $(\Rzero^k \setminus \{0\})/\sim$, where $\bx \sim \bx'$ if there is a $\lambda >0$ such that $\bx ' = \lambda \bx$.
 
 A tuple $\bx=(x_1,\dots,x_{3g-3})\in \Rzero^{3g-3}$  is {\bf strictly  positively  triangular} if each $x_i$ is positive and  for every  $(P_i,P_j,P_k)$ that bounds a shrunken pair of pants,  each of $x_i, x_j, x_k$ is less  the sum of the other two. 
 An element $\by \in P(\Rzero^{3g-3})$ is {\bf strictly positively  triangular} if one, and hence all, of its representatives in $\Rzero^{3g-3}$ are strictly positively triangular.
 The set of strictly positively triangular elements of $P(\Rzero^{3g-3})$ is open and non-empty.

Let $\mathcal{J}$ be the set of all isotopy classes of simple closed curves on $F$ and let $\mathcal{J}_{\mathcal{S}}$ be the subset of $\mathcal{J}$ coming from components of diagrams in $\mathcal{S}$. Let $(\cP',\cD')$ be a pants decompostion and an embedding of its dual graph   in $F$, giving rise to Dehn-Thurston coordinates. The {\bf projective pants coordinate map} on $\mathcal{J}$ is the composition of the maps
\be 
\pc:\mathcal{J} \to \BN^{3g-3} \embed \Rzero^{3g-3} \to P(\Rzero^{3g-3}),
\ee
where the first is the  pants coordinate map given by $J \to (n_j(J))_{j=1}^{3g-3}$ for any $J\in \mathcal{J}$. If $\pc(J)$ is strictly positively triangular, then clearly $J$ is triangular.

 The set $\mathcal{J}$  embeds as a dense subset of the set $\PMF$ of all projective measured foliations. We refer the reader to \cite{FLP} for  the definition and standard facts about measured foliations.
 The projectve pants coordinate map  can be continuously extended to 
  \be 
 \pc : \PMF \to P(\Rzero^{3g-3}).
 \ee
 Hence the inverse image of the set of strictly positively triangular elements of  $P(\Rzero^{3g-3})$ is open in $\PMF$.
 
The set $\PMF$ is the quotient of the set $\MF$ of all Whitehead-equivalence classes of measured foliations  by the action of multiplicative group  of all positive real numbers. The natural action of the mapping class group on $\PMF$ is minimal in the sense that every non-empty subset of $\mathcal{PMF}$ that is invariant under the mapping class
group is dense, see \cite{MP} or \cite[Theorem 6.19]{FLP}. It follows that  the set of (projective classes) of stable measured foliations of all pseudo-Anosov
homeomorphisms is dense in $\mathcal{PMF}$.  Therefore there exists a pseudo-Anosov $\phi: F \rightarrow F$
whose stable foliation $\sigma_{\phi}$ is in the inverse image of strictly  positively triangular elements with respect to the Dehn-Thurston coordinates from 
$(\cP',\cD')$.
A fundamental property of pseudo-Anosov homeomorphisms (see for instance \cite{FLP})
is that, for every simple closed curve $J$, $\phi^n(J)$ converges to the stable foliation $\sigma_{\phi}$ in 
 $\mathcal{PMF}$ as $n\rightarrow \infty$. In particular,  for $n$ large enough, $\phi^n(J)$ is positively triangular with respect to
 $(\cP',\cD')$.  Applying this to each $J\in \mathcal{J}_{\mathcal{S}}$ we see that, for $n$ large enough and for any diagram $S\in \mathcal{S}$,
$\phi^n(S)$  is positively triangular and hence triangular.
Hence, if we choose $n$ large enough, every diagram $S \in \mathcal{S}$  is triangular with respect to Dehn-Thurston coordinates from 
 $(\cP,\cD)=(\phi^{-n}(\cP'), \phi^{-n}(\cD'))$. 
\end{proof} 
The authors thank the referee for suggesting this shorter proof.

\def\ep{\epsilon}
\section{Characterization of the center}\label{center}Fix a finite type surface  $F=\bF \setminus \cV$ and a root of unity $\zeta$ of order $n$.  In this section we  characterize the center $\ZF$ of the skein algebra $\KF$ .
  
   Let $m$ be the order of $\zeta^4$,  $m'$ be the order of $\zeta^2$, and $\epsilon= \zeta^{m^2}$, i.e.
  \begin{equation} \label{mepsilon}
 m=\frac{n}{gcd(n,4)}, \quad m'=\frac{n}{gcd(n,2)},\quad  {\mbox{and}}\ \epsilon= \zeta^{m^2}. \end{equation}
 Note that  $\epsilon \in \{\pm 1,\pm {\bf i}\}$. More precisely
 \be 
 \label{eq.4cases}
  \epsilon = \begin{cases} \pm 1 \quad &\text{if} \ n \neq 4 \pmod 8\\
 \pm {\mathbf i} \quad &\text{if} \ n = 4 \pmod 8.
 \end{cases}
 \ee
Recall that a peripheral loop in $F$ is the boundary of any closed disk in $\bF$ containing exactly one point of $\cV$. 
If $F$ is closed then it has no peripheral loops. Clearly any skein corresponding to a peripheral loop is central in $\KF$, for any non-zero complex number $\zeta$.
 
When $\zeta$ is a root of $1$ of order $n$, there is another type of central element,   described in Theorem \ref{imageofCH}. 
  We will prove that $\ZF$ is the sub-algebra of $\KF$ generated by these two types of central elements. The proof is different for a surface with at least one puncture than for a closed surface. In the first case we use a triangulation  of the surface to parametrize simple diagrams, and in the second case we use Dehn-Thurston coordinates. In both cases we make use of a graded algebra associated to a filtration of the skein algebra.
  
\subsection{Formulation of results} For a subalgebra $A\subset \KF$ let $A[\partial]$ be the subalgebra of $\KF$ generated by $A$ and the peripheral loops.
 
  \def\la{\langle}
 \def\ra{\rangle}
 \def\CX{\BC\la X \ra_\epsilon}
 \def\Ch{Ch}
 \def\CKd{Ch(K_{\epsilon}(F))[\partial]}
 \def\XX{\mathrm{XX}}
 \def\CXd{\Ch(\CX)[\partial]}

\begin{theorem}
\label{thm.center0} Suppose $F$ is a finite type surface and $\zeta$ is a root of 1 of order $n$. The center $\ZF$ of the skein algebra $\KF$ is 
\be \label{eq.center}
\ZF= \begin{cases} Ch(\KeF)[\partial]\quad &\text{if } n \neq 0 \mod 4\\
 Ch(K^\ev_\ep(F))[\partial]  &\text{if } n = 0 \mod 4.
\end{cases}
\ee

 \end{theorem}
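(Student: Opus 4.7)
The inclusion $\supseteq$ in \eqref{eq.center} is immediate: peripheral loops are central in $\KF$ for any $\zeta$, and Theorem \ref{imageofCH} shows $Ch(\KeF)\subseteq\ZF$ when $n\not\equiv 0\pmod 4$, while $Ch(K^\ev_\ep(F))\subseteq\ZF$ when $n\equiv 0\pmod 4$. Only the reverse inclusion needs proof.

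The plan is to combine the lead-term filtration of Definition \ref{wellorder} with the commutation identity \eqref{eq.6s} of Proposition \ref{r.com}: for central $x\in\KF$ the leading simple diagram must have vanishing commutation exponent with every simple diagram, and this forces strong divisibility on its coordinates; using Theorem \ref{lead}(b) together with the product-to-sum identity \eqref{prodsum}, one can then find an element of $Ch(\KeF)[\partial]$ (respectively $Ch(K^\ev_\ep(F))[\partial]$) whose lead term matches that of $x$, subtract it, and induct on the lead. The triangulable and closed cases are handled separately because the available coordinates differ.

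Suppose first $F$ has at least one puncture, fix an ideal triangulation with edge set $E$, and totally order $E$ to make the lead term of Definition \ref{wellorder} unambiguous. For $x\in\ZF$ with standard presentation $x=\sum_j c_j\alpha_j$ and $\alpha_1$ the lead, centrality of $x$ combined with Theorem \ref{lead}(a) and \eqref{eq.6s} gives
\[ \zeta^{2(q_{\alpha_1}\cdot f_\beta)}=1\qquad\text{for every simple diagram }\beta. \]
Because admissible edge-colorings $\BZ$-span a sublattice of $\BZ^E$ containing $2\BZ^E$ (Proposition \ref{r.ev1}), this forces $q_{\alpha_1}(e)\equiv 0\pmod m$ for every edge $e$, where $m=\ord(\zeta^4)$. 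A direct corner-coordinate computation then shows that $\alpha_1$, up to a disjoint union of peripheral loops $\pi$, consists of $m$ parallel copies of some simple diagram $\gamma$, so by Theorem \ref{lead}(b) and \eqref{prodsum} the lead term of a suitable product $Ch(\gamma)\cdot\pi\in Ch(\KeF)[\partial]$ equals $\alpha_1$ up to a unit scalar. Subtracting the appropriate scalar multiple from $x$ yields a central element with strictly smaller lead, and induction concludes the proof. When $n\equiv 0\pmod 4$, $m'=\ord(\zeta^2)$ is even and the same computation gives $q_{\alpha_1}\cdot f_\beta\equiv 0\pmod{2}$; by \eqref{eq.itwo} this means $\alpha_1$ is even, hence $\gamma$ is even and the subtracted element lies in $Ch(K^\ev_\ep(F))[\partial]$.

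For $F$ closed, ideal triangulations are unavailable, so we replace edge-coordinates with Dehn-Thurston coordinates (Section \ref{DehnTh}). Given a central $x=\sum_j c_j\alpha_j$, Theorem \ref{twistedpants} produces a pants decomposition $(\cP,\cD)$ with respect to which every $\alpha_j$ is triangular. Triangularity ensures that inside each shrunken pair of pants only the model curves $d_1,d_2,d_3$ of Figure \ref{modeldual} appear, reducing the commutation exponent of two triangular diagrams to the same local bigon picture of Figure \ref{fig:bigon2} used in the proof of Proposition \ref{r.com}. Commuting $x$ with each pants curve $P_i$ and with auxiliary triangular probe diagrams adapted to the pair-of-pants/annulus decomposition then forces every Dehn-Thurston coordinate of $\alpha_1$ to be divisible by $m$ (with the same automatic parity improvement when $n\equiv 0\pmod 4$). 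The Chebyshev-peeling argument from the triangulable case identifies $\alpha_1$ with the lead term of an element of $Ch(\KeF)$ (resp.\ $Ch(K^\ev_\ep(F))$) up to a unit scalar, and induction on the lead finishes the proof; since $F$ has no peripheral loops, the $[\partial]$-decoration is trivial here.

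The principal obstacle is the closed-surface step: one must verify that the triangularity hypothesis supplied by Theorem \ref{twistedpants} is strong enough to reduce every local commutation calculation to the transparent triangle picture, and that enough triangular probe diagrams exist to detect every Dehn-Thurston coordinate of $\alpha_1$. A subsidiary delicate point, present in both cases, is tracking the scalar on the lead term of $Ch(\gamma)$ so that the subtracted element really cancels $c_1\alpha_1$; this is controlled by multiplicativity of the lead term (Theorem \ref{lead}(a)) and iterated application of \eqref{prodsum}.
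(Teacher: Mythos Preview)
Your open-surface argument is exactly the paper's: the lead-term reduction via \eqref{eq.6s} and Proposition \ref{r.ev1}, the corner-coordinate divisibility forcing $\alpha_1=\gamma^m\cdot\partial^{\mathbf r}$, and the induction are Lemma \ref{r.r2} and its surrounding proof verbatim.

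For the closed case your strategy is the paper's, but two details diverge and the first is a genuine gap. The paper does \emph{not} commute with the pants curves $P_i$: these are not triangular, the bigon picture of Proposition \ref{r.com} does not apply to them, and $P_i\cdot S$ in general has several terms of top pants-degree, so no lead-term identity of the form \eqref{eq.6s} is available for that probe. Instead the paper works entirely inside the triangular subalgebra $\Bt$ of the associated graded $\cB$, proves the explicit product formula of Proposition \ref{r.PS2}, and then (Proposition \ref{r.c1}) extracts $n_i\in m'\BZ$ by holding a triangular $\bn'$ fixed and shifting the \emph{twist} coordinate of the probe, $\bt'\mapsto\bt'+\boldsymbol\delta_i$; divisibility of $t_i$ then follows from Lemma \ref{r.even}. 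Second, the paper does not isolate a single lead diagram $\alpha_1$: it passes to $\cB$ via the $|\bn|$-filtration and then uses the finer $(\bn,\bt)$-grading on $\Bt$ together with Lemma \ref{r.decomp} to conclude that \emph{each} top-grade summand $S(\bn_j,\bt_j)$ is individually central in $\Bt$. Finally, for your induction to close without re-choosing $(\cP,\cD)$ at every step, you must check that the subtracted element $Ch(\beta)$ expands into triangular diagrams; the paper notes this holds because parallels of a triangular diagram are triangular.
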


 A particular case is when $\zeta^4=1$, or $\zeta \in \{\pm 1, \bf \pm i\}$. In this case $m=1$, 
 $\ep=\zeta$,  and $Ch: \KF\to \KF$ is the identity map. Hence, Theorem \ref{thm.center0}  becomes
 \begin{prop}  One has
 $$
\ZF=\begin{cases} \KF \quad &\text{if } \ \zeta=\pm 1 \\
K^\ev_{\zeta}(F) &\text{if }\ \zeta = \pm \bf i.
\end{cases}
$$
 \end{prop}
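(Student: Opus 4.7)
The plan is to deduce this proposition directly from Theorem \ref{thm.center0} by computing the relevant parameters in the special case $\zeta \in \{\pm 1, \pm \mathbf{i}\}$ and noting one additional observation about peripheral loops.

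First I would verify the parameter computations. Since $\zeta^4 = 1$ for every $\zeta$ in the given set, the order $m = \ord(\zeta^4) = 1$, so $\epsilon = \zeta^{m^2} = \zeta$ and $T_m = T_1 = x$. Consequently the threading map $Ch\colon K_\epsilon(F) \to K_\zeta(F)$ sends every simple loop $\alpha$ to $T_1(\alpha) = \alpha$; because such loops generate the skein algebra, $Ch$ is the identity map on $K_\zeta(F)$ (so in particular $Ch(K_\epsilon(F)) = K_\zeta(F)$ and $Ch(K^\ev_\epsilon(F)) = K^\ev_\zeta(F)$).

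Next I would split into the two cases listed in the piecewise formula of Theorem \ref{thm.center0}. When $\zeta = \pm 1$, the order $n \in \{1,2\}$ is not divisible by $4$, so Theorem \ref{thm.center0} gives $Z_\zeta(F) = Ch(K_\epsilon(F))[\partial] = K_\zeta(F)[\partial]$. Since peripheral loops are themselves elements of $K_\zeta(F)$, adjoining them produces no new elements and $K_\zeta(F)[\partial] = K_\zeta(F)$, as claimed. (Alternatively, this case is immediate from the already noted commutativity of $K_{\pm 1}(F)$.) When $\zeta = \pm \mathbf{i}$, the order $n = 4$ is divisible by $4$, so Theorem \ref{thm.center0} gives $Z_\zeta(F) = Ch(K^\ev_\epsilon(F))[\partial] = K^\ev_\zeta(F)[\partial]$.

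The one small remaining point, which I expect to be the only thing that really needs verification, is that $K^\ev_\zeta(F)[\partial] = K^\ev_\zeta(F)$, i.e.\ that every peripheral loop lies in $K^\ev_\zeta(F)$. This is immediate from the definition of evenness: a peripheral loop $\partial$ bounds a once-punctured disk in $\bar F$, so after an isotopy making it an arbitrarily small boundary of this disk, any simple loop $\beta \subset F$ can be made disjoint from $\partial$, giving $i(\partial,\beta) = 0$ and hence $i_2(\partial,\beta) = 0$ for every simple loop $\beta$. Therefore peripheral loops are even, $K^\ev_\zeta(F)$ is already closed under multiplication by them, and the equality follows. This completes the reduction to Theorem \ref{thm.center0}.
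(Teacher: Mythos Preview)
Your proposal is correct and follows the same approach as the paper: the proposition is presented there as the special case $\zeta^4=1$ of Theorem \ref{thm.center0}, via the observations $m=1$, $\epsilon=\zeta$, and $Ch=\mathrm{id}$. You are slightly more thorough than the paper in explicitly checking that adjoining peripheral loops adds nothing (in particular that peripheral loops are even), a point the paper leaves implicit here but uses later in the proof of Theorem \ref{zfaffine}.
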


 \begin{remark} If $\zeta$ is not a root of 1 then $\ZF$ is the subalgebra generated by the peripheral elements, see \cite{PS2}. 
 \end{remark}
 
 To combine the two cases of theorem \ref{thm.center0} into one, let us define 
\begin{equation}X= \begin{cases}\sS  \quad &   \text{ if } n\neq 0 \pmod 4\\
\sS^\ev  &   \text{ if } n= 0 \pmod 4.
\end{cases}
\end{equation}   Let $\CX$ be the $\BC$-span of $X$ in $\KeF$,
then the right hand side of \eqref{eq.center} is $\CXd$. From Theorem \ref{imageofCH} and the fact that peripheral loops are central, we already have
\be  \label{eq.in5}
 \CXd  \subset \ZF.
 \ee
 To prove Theorem \ref{thm.center0} we need to prove the reverse inclusion of \eqref{eq.in5}.

 \def\lt{\mathrm{lt}}
 \def\dr{\partial^{\bf r}}
 \subsection{Surfaces with at least one puncture}\label{sec.opensurface}
  In this section we prove Theorem \ref{thm.center0} for the case when $F$ has at least one puncture. If $F$ is a sphere with less than 3 punctures, then $\KF$ is commutative, and Theorem \ref{thm.center0} is true  since peripheral elements generate $\KF$ in this case. Hence, we will assume that $F$ is not a sphere with less than 3 punctures. In this case $F$ is triangulable, and we fix an ideal triangulation of $F$. Let $\partial_1,\dots, \partial_p$ be the peripheral skeins. For $\br=(r_1,\dots,r_p)\in \BN^p$ let $\dr= (\partial_1)^{r_1} \dots (\partial_p)^{r_p}$.

 Fix a total order of the set  of edges. Define the lead term of a non-zero element of $\KF$, and the functions  $f_\al, g_\al$, and $q_\al$ for $\al\in \sS$ as in Section \ref{review}.
\begin{lemma} \label{r.r2}
Suppose $0\neq z\in \ZF$. After rescaling $z$ by a nonzero complex number, there exists $\al\in X$ and $\br\in \BN^p$ such that $\ld(z) = \ld(Ch(\al) \dr)$.
 \end{lemma}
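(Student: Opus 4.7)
The plan is to analyze the lead edge-coloring of $z$ and manufacture $\al\in X$ and $\br\in\BN^p$ from it. Write $\ld(z)=c[g]$ with $c\in\BC^\times$ and $g=f_\beta$ the edge-coloring of some $\beta\in\sS$. For each $\al\in\sS$, Theorem \ref{lead}(a) combined with \eqref{eq.6t} gives $\ld(\al z)=c\zeta^{q_\al\cdot g}[f_\al+g]$ and $\ld(z\al)=c\zeta^{q_\beta\cdot f_\al}[f_\al+g]$. Centrality forces the two scalars to agree, and the antisymmetry $q_\beta\cdot f_\al=-q_\al\cdot f_\beta$ (from the Remark following Proposition \ref{r.com}) yields $\zeta^{2(q_\al\cdot g)}=1$, so $m'\mid q_\al\cdot g$ for every $\al\in\sS$. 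Proposition \ref{r.ev1} (the admissible edge-colorings $\BZ$-span $2\BZ^E$) then gives $m'\mid 2q_\beta(e)$ for every edge $e$; a short case analysis on $n\bmod 4$---using $m'=m$ when $n\not\equiv 0\pmod 4$ and $m'=2m$ otherwise---collapses this uniformly to the condition $m\mid q_\beta(e)$ for every edge $e$.

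I would next construct $\br$ and $\al$ with $g=mf_\al+\sum_j r_j f_{\partial_j}$. Since $f_{\partial_j}(e)$ counts the endpoints of $e$ at $v_j$, this requires $g(e)\equiv r_i+r_k\pmod m$ and $g(e)\ge r_i+r_k$ at every edge $e$ with endpoints $v_i,v_k$. The identity $q_\beta(e)=g_\beta(c_3)-g_\beta(c_1)$ for a diagonal corner pair translates $m\mid q_\beta(e)$ into the statement that diagonally opposite corner values at each edge agree modulo $m$. Propagating these congruences around the $2$-regular ``corner graph'' whose edges are the diagonal mod-$m$ links, combined with the local identity $g_\beta(T,\{e,a\})+g_\beta(T,\{e,b\})=f_\beta(e)$ inside each triangle, I would deduce that all corner values at each puncture $v_j$ share a common residue modulo $m$. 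Take $r_j$ to be the minimum corner value at $v_j$, and set $g_\al(\gamma):=(g_\beta(\gamma)-r_{v(\gamma)})/m\in\BN$. The triangle inequality $g(a)+2r_u\le g(b)+g(c)$ for $f_\al$ reduces to $r_u\le g_\beta(T,\{b,c\})$, which holds by minimality of $r_u$; the triangle parity for $f_\al$ follows from the mod-$m$ congruence at corners; hence $\al\in\sS$.

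For $n\equiv 0\pmod 4$ I additionally need $\al\in\sS^\ev$. Here $m'=2m$, giving the stronger condition $2m\mid q_\beta\cdot f_\gamma$ for every $\gamma\in\sS$; the same analysis applied to the central peripheral $\partial_j$ gives $2m\mid q_{\partial_j}\cdot f_\gamma$. Substituting the decomposition $q_\beta=m q_\al+\sum_j r_j q_{\partial_j}$ and dividing by $m$ forces $q_\al\cdot f_\gamma\equiv 0\pmod 2$. By \eqref{eq.itwo} this is $i_2(\al,\gamma)=0$ for every simple loop $\gamma$, so $\al\in\sS^\ev=X$. Finally, Theorem \ref{lead}(a), Proposition \ref{r.com}, and the product-to-sum formula \eqref{prodsum} combine to give $\ld(Ch(\al)\dr)=c'[mf_\al+\sum_j r_j f_{\partial_j}]=c'[g]$ for some $c'\in\BC^\times$; rescaling $z$ by $c/c'$ produces the required equality $\ld(z)=\ld(Ch(\al)\dr)$.

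The hardest step will be the combinatorial propagation in the second paragraph: the edge-level condition $m\mid q_\beta(e)$ directly links corners at \emph{different} vertices of the triangulation (the two diagonally opposite ones at each edge), so deducing that all corners at a single puncture share a residue modulo $m$ requires tracing around the cycles of the corner graph---likely identifiable with zig-zag/Petrie loops of the triangulation---and verifying their compatibility with the vertex-at-puncture partition.
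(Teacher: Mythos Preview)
Your overall strategy---extract the lead simple diagram $\beta$, use centrality to force $m\mid q_\beta(e)$ for every edge, then peel off an $m$-th power and peripheral factors---matches the paper's. The gap is in the combinatorial step you flag as hardest, and it stems from a misreading of \eqref{eq.q}. The pair $(c_1,c_3)$ in $q_\beta(e)=g_\beta(c_3)-g_\beta(c_1)$ does \emph{not} consist of corners at different vertices: $c_1$ and $c_3$ sit at the \emph{same} endpoint of $e$, one in each of the two triangles sharing $e$ (and likewise $c_2,c_4$ at the other endpoint). You can check this directly from \eqref{corner} and the edge formula $q_\beta(e)=(f_\beta(a)-f_\beta(b)+f_\beta(c)-f_\beta(d))/2$: the difference of the two corner numbers at a fixed endpoint of $e$ is exactly $\pm q_\beta(e)$, while the difference of genuinely diagonal corners (opposite endpoints, opposite triangles) is $(f_\beta(a)-f_\beta(b)-f_\beta(c)+f_\beta(d))/2$, which is not $q_\beta(e)$.

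With the correct reading, the ``propagation'' you anticipate collapses: the corners at a single puncture $v_j$ form a cycle in which consecutive corners share an edge $e$ and hence differ by $q_\beta(e)\in m\BZ$, so all corner values at $v_j$ are congruent modulo $m$ immediately---no zig-zag or Petrie-loop argument is needed. The paper streamlines this further by first stripping the peripheral components of $\beta$ to obtain $\alpha''$ (so your $\br$ is simply the tuple of multiplicities of peripheral loops in $\beta$); then $q_{\alpha''}=q_\beta$ and, since a diagram without peripheral loops has a zero corner at every puncture, one gets $g_{\alpha''}(c)\in m\BZ$ for every corner, whence $\alpha''=\alpha^m$ with $g_\alpha=g_{\alpha''}/m$. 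Your minimum-corner construction of $r_j$ would recover the same $\alpha$ once the congruence at each vertex is established, but the paper's ordering (peel peripherals first, then divide by $m$) avoids having to define $r_j$ indirectly. Your treatment of the $n\equiv 0\pmod 4$ case is fine and in fact simplifies: $q_{\partial_j}=0$, so $q_\beta=mq_\alpha$ and $2m\mid q_\beta\cdot f_\gamma$ gives $q_\alpha\cdot f_\gamma\in 2\BZ$ directly.
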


\begin{proof} After rescaling we can assume that $\al':=\ld(z)$ is a simple diagram.
Suppose $\beta$ is another simple diagram. From $z\beta = \beta z$ and \eqref{eq.lead} we get
\be 
\label{eq.r6} \ld(\al'\beta)= \ld(\beta\al').
\ee
By \eqref{eq.6s} one has $\ld(\al'\beta)= \zeta^{2(q_{\al'}\cdot f_\beta)}\,\ld(\beta\al')$. It follows that $ \zeta^{2(q_{\al'}\cdot f_\beta)}=1$; or $q_{\al'}\cdot f_\beta\in m'\BZ$ since $m'=\ord(\zeta^2)$. Thus,
\be 
\label{eq.r6g} q_{\al'}\cdot f  \in m'\BZ
\ee
holds for $f=f_\beta$ for all simple diagrams $\beta$. Hence 
\eqref{eq.r6g} is true for all $\BZ$-linear combination of $f_\beta$. 
By Proposition \ref{r.ev1}, \eqref{eq.r6g} holds for $2\bd_e$ for any edge $e$. This means $2 q_{\al'}(e)\in m'\BZ$, or equivalently $q_{\al'}(e)\in m\BZ$ for all edges $e$.

By taking out all the peripheral loops in $\al'$ we get a simple diagram $\al''$. Hence 
$\al'= \al'' \dr$ for some $\br=(r_1,\dots,r_p)\in \BN^p$. Note that $q_{\al''}=q_{\al'}$. 
Since $\al''$ does not have peripheral loops, at every vertex $v\in \cV$ there is a corner $c$ such that the corner number $g_{\al''}(c)=0$. If $c,c'$ are two adjacent corners at $v$ sharing  a common edge $e$, then  \eqref{eq.q} shows that $|g_{\al''}(c)- g_{\al''}(c')|= |q_{\al''}(e)|\in m\BZ$. By starting at a corner with 0 value of $g_{\al''}$, we  see that $g_{\al''}(c)\in m\BZ$ for all corners $c$. It follows that $\al''= \al^m$, where $\al$ is the simple diagram with $g_\al = g_{\al''}/m$. We have $\ld(z)= \al^m \dr$.

Since $Ch(\al)$ is the product of $T_m(\al_j)$ for all components $\al_j$ of $\al$, and $\dr$ can be isotoped away from any diagram,  we have $\al^m \dr= \ld(Ch(\al) \dr)$. Thus $\ld(z)= \ld(Ch(\al) \dr)$.

It remains to show that $\al\in X$. If $n\neq 0 \pmod 4$ then $X=\sS$ and there is nothing to prove. Suppose $n=0 \pmod 4$, then $m'=2m$, and $X= \sS^\ev$. Since $q_{\al'}= q_{\al''}= m q_\al$, from \eqref{eq.r6g} 
we get $q_\al \cdot f_\beta\in 2\BZ$, for all simple diagrams $\beta$. By \eqref{eq.itwo}, we have $i_2(\al, \beta)= q_\al \cdot f_\beta \pmod 2 =0$. This shows $\al \in \sS^\ev$.
\end{proof}

 \begin{proof}[Proof of Theorem \ref{thm.center0}] Let $0\neq z \in \ZF$. By Lemma \ref{r.r2}, after rescaling  $z$ by a non-zero complex number there are $\al\in X$ and $\br\in \BN^p$ such that $\ld(z) = \ld(Ch(\al) \dr)$.  By Theorem \ref{imageofCH}, $Ch(\al)$ is central. Thus $Ch(\al) \dr$ is also central. Hence $z'= z - Ch(\al) \dr$ is a central element with smaller lead term. By induction  $z\in Ch(\CX)[\partial]$, proving the theorem.
 \end{proof}

\subsection{Closed surfaces}\label{centerclosed}

 Now we consider the case where $F$ is a closed surface, which is much more complicated than the open surface case. The proof of Theorem \ref{thm.center0} for this case will occupy the rest of Section \ref{center}.
 
Since $F$ is closed, there are no peripheral elements, and  Theorem \ref{thm.center0}  says that 
\be 
\ZF= Ch(\CX).
\ee

If $F$ is a sphere then the skein algebra is commutative and Theorem \ref{thm.center0} holds. When $F$ has genus 1, Theorem \ref{thm.center0} holds, it is an easy consequence of the product to sum formula \cite{FG}. In \cite {AF1}  the case $n= 2 \pmod 4$ is handled explicitly for genus one surfaces. Hence , we  assume that $F$ has genus greater than $1$.

The outline of the proof is as follows. We choose a  pants decomposition of the closed surface $F$ and use Dehn-Thurston coordinates to parametrize
 simple diagrams on $F$ as described in section \ref{DehnTh}. We  filter the algebra $K_\zeta(F)$ using the pants coordinates. This gives an associated graded algebra. We consider a subspace of the graded object generated by  triangular diagrams. We show that if a central skein  is a linear combination of triangular diagrams  then  each of these triangular diagrams is an $m$-th power of another triangular diagram in $X$.  Finally, given a skein in the center of the algebra,  by Theorem  \ref{twistedpants} we can choose the coordinates so that  the skein is a linear combination of triangular diagrams, and  proceed by induction where we subtract  elements of  $Ch(\CX) $ to get a simpler skein.

\subsection{Filtrations and gradings}We start by recalling some general facts about filtrations and associated gradings of algebras.
Suppose $\cB$ is a  $\BC$-vector space
 graded by an abelian monoid $I$. This means that $\cB= \bigoplus_{i\in I} \cB_i$, where each $\cB_i$ is a subspace of $\cB$.
A subspace $M\leq \cB$ is said to {\bf respect the grading} if
\begin{equation}M =\bigoplus_{i\in I} (M\cap \cB_i).\end{equation}

\begin{lemma} \label{r.decomp}
Suppose $\cB$ is a $\BC$-algebra, and the grading is
 {\em compatible} with the algebra structure, i.e. $\cB_i \cB_j \subset \cB_{i+j}$, then the  center $Z(\cB)$ of $\cB$ respects the grading.
 \end{lemma}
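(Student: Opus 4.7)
The plan is the standard degree-separation argument for homogeneous components of central elements. Given any $z\in Z(\cB)$, the direct sum decomposition $\cB=\bigoplus_{i\in I}\cB_i$ yields a unique expression $z=\sum_{i\in I}z_i$ with $z_i\in\cB_i$ and only finitely many $z_i$ nonzero. What I must show is that each $z_i$ already lies in $Z(\cB)$; once this is established, $Z(\cB)=\bigoplus_{i\in I}\bigl(Z(\cB)\cap\cB_i\bigr)$ is immediate.

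First I would reduce the centrality test to homogeneous elements: $z_i$ commutes with every $b\in\cB$ if and only if it commutes with every homogeneous $b\in\cB_j$ for every $j\in I$, because any element of $\cB$ is a finite sum of such $b$'s. So fix $j\in I$ and a homogeneous $b\in\cB_j$. From $zb=bz$ I get
\begin{equation}
0 \;=\; zb-bz \;=\; \sum_{i\in I}\bigl(z_i b - b z_i\bigr).
\end{equation}
The compatibility $\cB_i\cB_j\subset\cB_{i+j}$ forces $z_i b - b z_i\in\cB_{i+j}$, so as $i$ ranges over $I$ the terms $z_i b - b z_i$ live in pairwise distinct components $\cB_{i+j}$ of the direct sum (here I use that $i\mapsto i+j$ is injective on $I$, which holds in any abelian monoid via the usual cancellation when we view the summation index-by-index). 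Since the sum vanishes and the sum decomposition is a direct sum, each homogeneous component vanishes: $z_i b = b z_i$ for every $i$.

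Repeating this for every $j$ and every homogeneous $b\in\cB_j$ gives $z_i\in Z(\cB)$ for all $i$, which is exactly what we wanted. Therefore $Z(\cB)=\bigoplus_{i\in I}\bigl(Z(\cB)\cap\cB_i\bigr)$, i.e. $Z(\cB)$ respects the grading.

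I do not expect any real obstacle here; the one subtlety worth being careful about is the injectivity of $i\mapsto i+j$ on the index monoid $I$, which is needed to separate the terms $z_i b - b z_i$ into distinct graded pieces. In the monoid of interest for the paper (products of pants coordinates valued in $\BN^{3g-3}$, or more generally submonoids of $\BZ^{3g-3}$), this is automatic. If one wanted a statement for a completely general abelian monoid $I$, one would instead group the $z_i$ by the fibers of translation by $j$; the conclusion is identical.
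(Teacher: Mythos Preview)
Your proof is correct and is essentially the paper's own argument: decompose a central $z$ into homogeneous pieces, commute with a homogeneous $x\in\cB_k$, and read off $z_i x = x z_i$ from the direct sum. You are right to flag the cancellation hypothesis on $I$ that makes $i\mapsto i+k$ injective---the paper leaves this implicit with ``comparing elements having the same gradings,'' and it does hold in every application here; your closing remark that grouping by translation fibers still yields the identical conclusion for a non-cancellative monoid is not quite right as stated (the fibers depend on $k$, so the argument does not immediately isolate each $z_i$), but this does not affect the main proof.
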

 \begin{proof}
 Let $z$ be a central element, with  $z=\sum z_i$, where $z_i\in \cB_i$. 
If $x\in \cB_k$ then
\begin{equation} xz= \sum _i  x z_i, \quad zx = \sum _i z_ix.\end{equation}
 Note that both $z_ix$ and $xz_i$ are in $\cB_{i+k}$. Comparing elements having the same gradings, we get $z_ix = xz_i$, which shows each $z_i$ is central.
 \end{proof}

Let $\cA$ be a $\BC$-algebra.  Suppose $\{ F_n\}$ is a {\bf filtration} of $\cA$, where the index set is $\BN$.  This means that each $F_n$ is a $\BC$-submodule of $\cA$,
\begin{equation}
F_{n} \subset F_{m} \ {\mbox{if}}\  n<m ,\  \  F_n F_k \subset F_{n+k},{\mbox{  and}}\ \cup_{n\in \BN}F_n=\cA.
\end{equation}
One defines the associated graded algebra  $\Gr\cA$ as follows.
If  $x\in \cA$  then  $x\in F_n$ for some $n$, and the smallest such $n$ is called the 
 {\bf degree} of $x$, denoted by
   $\gr(x)$. One has $\gr(xy) \le \gr(x) + \gr(y)$.
Let
$\Gr_n \cA = F_n /F_{n-1}$ and 
\begin{equation} \Gr\cA =\bigoplus \Gr_n\cA.\end{equation} 
Let $\tau_n: F_n \to \Gr_n\cA$ be the natural projection.
 Define the product on $\Gr\cA$ as follows:
\begin{equation}
 {\mbox{If  }}\ \ 
 \gr(x)=k,\  \gr (y)=l  \ \ {\mbox{  then  }}\ \ 
\tau_k(x) \tau_l(y) = \tau_{k+l}(xy).
\end{equation}
Then $\Gr\cA$ is a graded algebra. 
Define $\ld : \cA \to \Gr\cA$ by 
\begin{equation}\label{tau}
\ld (x)=\tau_n(x) \ \ 
{\mbox{if}} \ \  \gr(x)=n.
\end{equation}
Note that $\ld $ does not respect the multiplication. However, we
have the following.

\begin{equation}\label{eq.com1}
\ld (x) \ld  (y)  =
\begin{cases}   \ld (xy)
 \quad &\text{if } \gr(xy)= \gr(x) + \gr(y) \\
    0      &\text{if }  \gr(xy)< \gr(x) + \gr(y).
\end{cases}
\end{equation}
Moreover,

\begin{equation}\label{eq.gr}
\gr(x) = \gr(\ld (x)).
\end{equation}
In particular, if $x\neq 0$ then $\ld (x) \neq 0$.
From \eqref{eq.com1} it is easy to show
\begin{lemma}   \label{r.sub}
If $\cA$ is a $\BC$-algebra with filtration indexed by $\BN$, $\Gr\cA$ is the associated graded algebra and the map 
$\ld $ is defined by (\ref{tau}) then 
\begin{equation}\ld (Z(\cA)) \subset  Z(\Gr \cA).\end{equation} 
\end{lemma}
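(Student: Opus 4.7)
The plan is to show that for any $z \in Z(\cA)$, the image $\ld(z) \in \Gr\cA$ commutes with every homogeneous element of the graded algebra, since $\Gr\cA = \bigoplus_k \Gr_k\cA$ and commutation can be checked piece by piece on homogeneous components.

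First I would fix $z \in Z(\cA)$, set $n=\gr(z)$, so $\ld(z)=\tau_n(z)\in \Gr_n\cA$. A general element of $\Gr_k\cA$ has the form $\tau_k(y)$ for some $y\in F_k$; if $y\in F_{k-1}$ then $\tau_k(y)=0$ and there is nothing to check, so I may assume $\gr(y)=k$, which gives $\tau_k(y)=\ld(y)$. The goal is then to establish the identity
\[
\ld(z)\,\ld(y)=\ld(y)\,\ld(z) \qquad \text{in } \Gr_{n+k}\cA.
\]

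The key input is the centrality $zy=yz$ in $\cA$, which immediately yields $\gr(zy)=\gr(yz)$. I would then split into two cases according to \eqref{eq.com1}. If $\gr(zy)=n+k$, both sides collapse to $\ld(zy)=\ld(yz)$ via \eqref{eq.com1}, and these are equal since $zy=yz$. If instead $\gr(zy)<n+k$, then by \eqref{eq.com1} we have $\ld(z)\ld(y)=0$, and symmetrically $\gr(yz)<n+k$ forces $\ld(y)\ld(z)=0$. In either case the two products agree.

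This handles all homogeneous elements, hence by linearity $\ld(z)$ is central in $\Gr\cA$, completing the proof. There is no real obstacle here: the lemma is essentially a bookkeeping consequence of \eqref{eq.com1}, and the only subtlety is remembering to treat the degree-drop case, where both $\ld(z)\ld(y)$ and $\ld(y)\ld(z)$ vanish for the same reason (namely $\gr(zy)=\gr(yz)$ drops below $n+k$).
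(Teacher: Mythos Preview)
Your argument is correct and is exactly the straightforward verification the paper alludes to when it says the lemma follows easily from \eqref{eq.com1}; the paper gives no further details beyond that remark. Your case split on whether $\gr(zy)=n+k$ or $\gr(zy)<n+k$, together with $zy=yz$, is precisely what is needed.
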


\subsection{Filtration of the Kauffman bracket skein algebra by pants coordinates}

 Fix a pair of pants decomposition $\cP=\{P_i\}_{i=1}^{3g-3}$ and a dual graph $\cD$ in $F$ so that one can define the Dehn-Thurston coordinates $\bn(\al),\bt(\al)$, for  $\al\in \sS$, as in section~\ref{DehnTh},  where $\bn(\al)\in \BN^{3g-3}$ and $\bt(\al)\in \BZ^{3g-3}$.

 \begin{definition}\label{Ind}
 Let $\I\subset \BN^{3g-3} \times \BZ^{3g-3}$  be the subset  consisting of 
 $(\bn,\bt)$  satisfying: 
  \begin{itemize}
 \item If $(i,j,k)$ bound a pair of pants, then $n_i+ n_j + n_k$ is even;
 \item   If $n_i=0$, then $t_i \ge 0$.
 \end{itemize}
 \end{definition}
 There is a bijection between elements of $\I$ and $\sS$ via Dehn-Thurston coordinates. Denote a simple diagram corresponding to $(\bn,\bt)$ by $S(\bn,\bt)$.

\def\Fd{F^\diamond}

 Recall that an element $\bn\in \BN^{3g-3}$  is   {\bf triangular} if for every  $(i,j,k)$ that bounds a pair of pants, $n_i+ n_j + n_k$ is even and each of $n_i, n_j, n_k$ is less than or equal to the sum of the other two. 
 Let $\di\in \BZ^{3g-3}$ be the vector whose coordinates are 0, except for the $i$-th one, which is 1.  Although $\di$ is not triangular, we will show that $2\di$ is a $\BZ$-linear combination of triangular tuples. 
  \begin{lemma}\label{r.even} 
    The $\BZ$-linear span of all triangular elements in  $\BN^{3g-3}$ contains $2\BZ^{3g-3}$.
    \end{lemma}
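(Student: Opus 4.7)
The plan is to exhibit, for each coordinate $i \in \{1,\dots,3g-3\}$, an explicit $\BZ$-linear combination of triangular tuples equal to $2\boldsymbol{\delta}_i$. The construction splits into two cases according to whether $P_i$ bounds a single pair of pants (appearing twice on its boundary, the \emph{handle} case) or two distinct pairs of pants.

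In the handle case, the unique pair of pants $S_c$ incident to $P_i$ has triple $(P_i, P_i, P_e)$ for some third curve $P_e$. Then the singleton tuple $\boldsymbol{\delta}_i$ is triangular: at $S_c$ its labels read $(1,1,0)$, satisfying both the even-sum condition and all three triangle inequalities, and at every other pair of pants the labels vanish. Hence $2\boldsymbol{\delta}_i$ lies in the $\BZ$-span trivially in this case.

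In the non-handle case, I would take $T := 2\sum_{j=1}^{3g-3}\boldsymbol{\delta}_j$, the tuple with value $2$ at every coordinate, and $T' := T - 2\boldsymbol{\delta}_i$, the tuple with value $2$ everywhere except $0$ at the $i$-th coordinate. Both should be triangular: for $T$, every pants triple carries the labels $(2,2,2)$, which satisfies parity and the triangle inequality $2 \le 2+2$; for $T'$, the pants not containing $P_i$ still carry $(2,2,2)$, while at each of the two pants whose boundary contains $P_i$ the labels read $(0,2,2)$. Since in this case $P_i$ appears only once in each triple, the $0$ is flanked by two $2$'s, so parity (sum $4$) and the triangle inequalities $0 \le 2+2$ and $2 \le 0+2$ all hold. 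Thus $2\boldsymbol{\delta}_i = T - T'$ exhibits $2\boldsymbol{\delta}_i$ as a $\BZ$-linear combination of triangular tuples.

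The delicate point is precisely the triangularity check for $T'$: were $P_i$ a loop in the pants graph, the relevant triple would be $(P_i, P_i, P_e)$ and the $T'$-labels there would collapse to $(0,0,2)$, violating $2 \le 0 + 0$. This failure is exactly why the two cases must be separated, and the handle case is fortunately so constrained that $\boldsymbol{\delta}_i$ is itself triangular and no subtraction is needed. Together the two constructions give $2\boldsymbol{\delta}_i$ in the $\BZ$-span of triangular tuples for every $i$; since these vectors generate $2\BZ^{3g-3}$, the lemma follows.
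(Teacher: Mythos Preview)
Your argument is correct. The paper takes a different route: it observes that removing the dual graph $\cD$ from $F$ produces a punctured surface $F^\diamond = F \setminus \cD$ carrying an ideal triangulation whose edges are $a_j = P_j \setminus \cD$, and that a tuple $\bn \in \BN^{3g-3}$ is triangular precisely when the corresponding map $a_j \mapsto n_j$ is an admissible edge-coloring of this triangulation. The lemma is then an immediate consequence of Proposition~\ref{r.ev1}, whose proof expresses $2\boldsymbol{\delta}_a$ as a combination of the edge-colorings of small peripheral loops around the endpoints of $a$ and around $a$ itself. Your construction is more elementary and self-contained, working directly with the combinatorics of the pants graph and avoiding the passage to $F^\diamond$; the paper's approach, on the other hand, unifies the punctured and closed settings by reducing the closed case to the result already proved for triangulated surfaces.
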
 
    
\begin{proof} The surface $\Fd:=F \setminus \cD$ is a finite type surface.  Let $a_j= P_j\cap F'$.   The elements of the collection $\{a_j\}$   are the edges of an ideal  triangulation  of $\Fd$. 
 Simple diagrams $\al$ on $\Fd$ are parameterized by admissible edge-colorings $f_\al:\{ a_i\} \to \BN$, where $f_\al(a_j)= i(\al,a_j)$. Note that $f:\{a_j\}\to \BN$ is admissible if and only if $(n_j=f(a_j))_{i=1}^{3g-3}$ is triangular. Hence the lemma follows from Proposition \ref{r.ev1}.
  \end{proof}

 For any simple diagram $S(\bn,\bt)$ define the $\BN$-grading of  $S(\bn,\bt)$ to be
\begin{equation}\label{graddiag}
\gr(S(\bn,\bt))= |\bn|=\sum _i n_i.
\end{equation}

\def\cF{\mathcal F} 
For $n\in \BN$,  let $\cF_n \subset \KF$  be the $\BC$-subspace spanned by all $\al\in \sS$  with $\gr(\al) \le n$.  
The filtration $\{ \cF_n \}_{n\in \BN}$ is  compatible with the algebra structure of $\KF$.
Denote the  associated graded algebra by $\cB$, 
$ \cB = \bigoplus F_n/F_{n-1},$
with a bijective map $\ld: \KF \to \cB$. As $\sS$ is a $\BC$-basis of $\KF$, the set $\lt(\sS)$ is a $\BC$-basis of $\cB$, and we often identify $\sS$ with $\lt(\sS)$ when we say that the set $\sS$ of simple diagrams is a $\BC$-basis of $\cB$.

\def\bq{\mathbf q}

\subsection{Triangular subspaces}
 Recall that a simple diagram $\al=S(\bn,\bt)$ is  triangular if $\bn$ is triangular and  $t_i=0$ whenever $n_i=0$.
Let $\IT\subset \I$ be the subset of all $(\bn,\bt)\in \I$ such that $S(\bn,\bt)$ is triangular.
Let  $\Bt$ to be the $\BC$-subspace of $\cB$ spanned by all triangular simple diagrams.
 This subspace was first considered in \cite{PS2} where it is proved that 
  $\Bt$ is a subalgebra of $\cB$, and that $\cB$ is a domain. 
  
 The finite type surface $\Fd= F \setminus \cD$ has an ideal triangulation with edges $\{ a_j\}$, where $a_j = P_j \setminus \cD$. Suppose $\bn\in \BN^{3g-3}$ is triangular. The function $\{ a_j\} \to \BN$, given by $a_j \to n_j$, is an admissible coloring, and  defines a simple diagram $\gamma\subset \Fd$. Define $\bq(\bn)\in \BN^{3g-3}$ so that its $j$th-coordinate is $q_{\gamma}(a_j)$, where $q_\gamma$ is defined by \eqref{eq.q}.

\def\sgn{\mathrm{sgn}}
\def\cF{\mathcal F}

   \begin{prop}\label{r.PS2} Suppose 
    $(\bn,\bt), (\bn',\bt')\in \IT$.  Let $\bq'= \bq(\bn')$, then
   \be 
   \label{eq.com1u}
   S(\bn,\bt)  \, S(\bn',\bt') = \zeta^{ - \bn\cdot \bq' + \bt \cdot \bn' - \bn\cdot \bt'} \, S(\bn+\bn', \bt+\bt') \quad \text{ in $\cB$}.
   \ee
   Consequently, 
 \be  \label{eq.com1v}
   S(\bn,\bt)  \, S(\bn',\bt') = \zeta^{2( - \bn\cdot \bq' + \bt \cdot \bn' - \bn\cdot \bt')} \, S(\bn',\bt')  \, S(\bn,\bt) \quad \text{ in $\cB$}.
   \ee
   \end{prop}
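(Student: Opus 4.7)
The plan is to isotope $S(\bn,\bt)$ and $S(\bn',\bt')$ into standard position with respect to $(\cP,\cD)$, stack the first above the second, and compute the Kauffman resolutions of all resulting crossings modulo the filtration $\{\cF_k\}$. Since both diagrams are triangular, in every shrunken pair of pants $S_c$ each consists only of parallel copies of the triangular model arcs $d_i$, and in every annulus $A_i$ each consists of $n_i$ (resp.\ $n'_i$) parallel strands carrying net signed twist $t_i$ (resp.\ $t'_i$). After stacking, all crossings live either inside the $S_c$'s or inside the annular cores. Only a resolution producing a single simple diagram with pants coordinates exactly $\bn+\bn'$ contributes modulo $\cF_{|\bn|+|\bn'|-1}$, and inside each $S_c$ and each $A_i$ there is a unique such ``maximal'' resolution pattern; this reduces the problem to computing one scalar factor per region.

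For the contribution inside the shrunken pants, I would observe that $\Fd = F\setminus\cD$ is an ideally triangulated surface whose edges are $a_j = P_j\setminus\cD$, and that the parts of $S(\bn,\bt)$ and $S(\bn',\bt')$ lying in $\Fd$ are simple diagrams on $\Fd$ with edge-colorings $\bn$ and $\bn'$ respectively. Proposition~\ref{r.com} applied to $\Fd$ then gives the lead term of this local product as $\zeta^{\bq(\bn)\cdot\bn'}\,[\bn+\bn']$. Combining with the antisymmetry identity $\bq(\bn)\cdot\bn' = -\bn\cdot\bq(\bn')$, which follows from $q_\alpha(a) = \tfrac{1}{2}\sum_b \sigma_{ab}f_\alpha(b)$ with $\sigma$ antisymmetric (as noted in the Remark after Proposition~\ref{r.com}), yields the desired factor $\zeta^{-\bn\cdot\bq'}$.

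The annular contribution is the main local calculation. In each $A_i\times[0,1]$ the upper family of $n_i$ twisted strands meets the lower family of $n'_i$ twisted strands in $t_i n'_i - n_i t'_i$ signed crossings, this being the algebraic intersection number of the two classes $(n_i,t_i)$ and $(n'_i,t'_i)$ with the sign dictated by the right-handed positive-twist convention. The unique resolution at each annular crossing that survives in $\cB$ is the one using the same sign of smoothing throughout, contributing $\zeta^{t_i n'_i - n_i t'_i}$ per annulus; summing over $i$ yields the global factor $\zeta^{\bt\cdot\bn' - \bn\cdot\bt'}$. Multiplying the $\Fd$-factor and the annular factor produces \eqref{eq.com1u}. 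The hardest step is verifying this annular count while keeping the twist sign convention consistent with the sign of the Kauffman resolution that survives in the lead class; triangularity (absence of the nontriangular model curves $u_i$) is what makes this purely a toroidal computation in each $A_i$, with no interference from the shrunken pants.

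Finally, \eqref{eq.com1v} follows formally. Applying \eqref{eq.com1u} with the roles of $(\bn,\bt)$ and $(\bn',\bt')$ interchanged gives $S(\bn',\bt')\,S(\bn,\bt) = \zeta^{-\bn'\cdot\bq(\bn) + \bt'\cdot\bn - \bn'\cdot\bt}\,S(\bn+\bn',\bt+\bt')$. Using antisymmetry in the form $-\bn'\cdot\bq(\bn) = \bn\cdot\bq(\bn')$ and subtracting exponents from \eqref{eq.com1u} yields $S(\bn,\bt)\,S(\bn',\bt') = \zeta^{2(-\bn\cdot\bq' + \bt\cdot\bn' - \bn\cdot\bt')}\,S(\bn',\bt')\,S(\bn,\bt)$, proving \eqref{eq.com1v}.
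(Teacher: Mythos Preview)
Your approach is essentially the same as the paper's: both split the crossings of the stacked diagram into those inside the shrunken pants and those inside the annuli, compute the shrunken-pants contribution via Proposition~\ref{r.com} applied to the ideally triangulated surface $\Fd=F\setminus\cD$ (yielding $\zeta^{\bq(\bn)\cdot\bn'}=\zeta^{-\bn\cdot\bq'}$ by antisymmetry of $\sigma$), and then handle the annular contribution separately. The paper obtains the annular factor $\zeta^{\bt\cdot\bn'-\bn\cdot\bt'}$ and the identification of the good-resolution diagram as $S(\bn+\bn',\bt+\bt')$ by quoting \cite[Lemma~17]{PS2}, whereas you sketch the annular computation directly as the algebraic intersection of the classes $(n_i,t_i)$ and $(n'_i,t'_i)$ on the torus; your honest remark that matching the twist sign convention with the surviving Kauffman smoothing is ``the hardest step'' is exactly the content of that cited lemma, and you should also record that the good resolution really is $S(\bn+\bn',\bt+\bt')$. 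With those two points pinned down, your argument and the paper's coincide.
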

   \begin{proof} Let $S^1$ be the standard circle with a fixed base point $*\in S^1$. The point $*$ together with its antipodal point cut $S^1$ into two open half-circles denoted by $H$ and $H'$. 
   Recall that the annulus $A_j$ is a small regular neighborhood of $P_j$.
  Choose an identification $A_j \equiv S^1 \times [-1,1]$, 
  such that $A_j \cap \cD \equiv \{*\} \times [-1,1]$, and equip
 $A_j$ with the  product Riemannian metric of $S^1 \times [-1,1]$. Choose a simple diagram $\al\in \Fd \cup (\cup_j A_j)$ representing $S(\bn,\bt)$ such that (in each $A_j$) one has $\al \cap \partial A_j = U_j \times \{-1, 1\}$, where $U_j\subset H$ and $|U_j|= n_j$,  and each  connected component of $\al \cap A_j$ is a geodesic. Similarly,  choose a simple diagram $\al'\in \Fd \cup (\cup_j A_j)$ representing $S(\bn',\bt')$ such that (in each $A_j$) one has $\al' \cap \partial A_j =U'_j \times \{-1, 1\}$, where $U'_j\subset H$ and $|U'_j|= n'_j$, and each connected  component of $\al' \cap A_j$ is a geodesic.

 In each ideal triangle $T$, which is a connected component of $\Fd \setminus (\cup_j\mathring A_j)$, where $\mathring A_j$ is the interior, we can assume that each connected component of $\al \cap T$ intersects each connected component of $\al'\cap T$ in at most one point. Let $\al \bullet \al$ be the link diagram which is $\al \cup \al'$ with $\al$ above $\al'$. At every double point $x$ of $\al \bullet\al'$ there is only one resolution, call it the good resolution, which does not produce a diagram of lesser degree. Let $\sgn(x)$ be the sign (in the skein relation)
  of this resolution. Denote $s_1= \sum \sgn (x)$ where the sum is over all double points in the annuli, and $s_2= \sum \sgn (x)$ where the sum is over all double points outside the annuli. 
Let $(\al\bullet \al')^{\#}$ be the result of doing good resolution at every double point. 
 Then 
 \be \label{eq.com9}
 \al \al' = \zeta^{s_1 + s_2  } (\al\bullet \al')^{\#} \quad \mod \cF_{|\bn| + |\bn'|}(\KF).
 \ee
 From  \cite[Lemma 17]{PS2}, one has $s_1= \bt \cdot \bn' - \bn\cdot \bt'$ 
 and $(\al\bullet \al')^{\#} = S(\bn+\bn', \bt + \bt')$. 
 
  Let  $\al_0=\al$ outside the annuli while inside an annulus $A_j$, $\al_0$ is the union of $n_j$ vertical lines $U_j \times [-1,1]$. Then $\al_0$ represents $S(\bn,0)$. Similarly, let $\al'_0=\al'$ outside the annuli and   $\al'\cap A_j =U'_j \times [-1,1]$. Then \eqref{eq.com9} for $\al_0, \al'_0$ becomes
  \be \label{eq.com9a}
 \al_0 \al'_0 = \zeta^{s_2  } S(\bn+ \bn',0) \quad \mod \cF_{|\bn| + |\bn'|}(\KF).
 \ee
  Identity \eqref{eq.6t} shows that the factor $\zeta^{s_2  }$ is equal to $\zeta^{\bq(\bn) \cdot \bn'}$. Since this is true for all non-zero $\zeta$, we have $s_2= \bq(\bn) \cdot \bn' = -\bn \cdot \bq'$. 
  Hence $s_1 + s_2= - \bn \cdot \bq' + \bt \cdot \bn' - \bn\cdot \bt'$, and \eqref{eq.com9} proves the proposition.
 \end{proof}

Let $X^\Delta$ be the set of all triangular simple diagrams in $X$. Let $Z(\Bt)$ be the center of the algebra $\Bt$. Recall that $m = n/gcd(n,4)$ is the order of $\zeta^4$.
\begin{lemma}\label{r.even7}
 Suppose $\al\in \sS$ and $\al^m\in Z(\Bt)$. Then $\al\in X^\Delta$.
\end{lemma}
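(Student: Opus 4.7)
I would prove the two conclusions—that $\al$ is triangular, and that $\al\in\sS^\ev$ when $n\equiv 0\pmod 4$—by explicit computation in the graded algebra $\cB$.

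First (triangularity). Since $\al\in\sS$ has no self-intersections, iterating the product in $\KF$ shows that $\al^m$ is represented by the simple diagram of $m$ disjoint parallel copies of $\al$; if $\al=S(\bn,\bt)$, that diagram is $S(m\bn,m\bt)$. Because $\cB$ is a domain (a consequence of Theorem \ref{thm.zero}, as used around Proposition \ref{r.PS2}), $\gr(\al^m)=m\gr(\al)$, and therefore $\al^m=S(m\bn,m\bt)$ in $\cB$ up to a nonzero scalar. The hypothesis $\al^m\in Z(\Bt)\subset\Bt$ then forces $S(m\bn,m\bt)\in\IT$; invariance of the defining conditions (the triangle inequalities on $\bn$ and the condition $n_j=0\Rightarrow t_j=0$) under scaling by the positive integer $m$ shows this is equivalent to $(\bn,\bt)\in\IT$, i.e.\ $\al$ is triangular.

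Second (evenness, assuming $n\equiv 0\pmod 4$, so $n=4m$). Because $\al\in\Bt$ by the first step, iterating Proposition \ref{r.PS2} gives, for every triangular $\beta=S(\bn',\bt')$,
\[
\al^m\cdot S(\bn',\bt') \;=\; \zeta^{2mE(\al,\beta)}\cdot S(\bn',\bt')\cdot\al^m,\qquad E(\al,\beta):=-\bn\cdot\bq(\bn')+\bt\cdot\bn'-\bn\cdot\bt'.
\]
Centrality of $\al^m$ in $\Bt$ combined with $n=4m$ yields $E(\al,\beta)\equiv 0\pmod 2$. The signed-resolution analysis in the proof of Proposition \ref{r.PS2} also shows $E(\al,\beta)\equiv i(\al,\beta)\pmod 2$ (a signed sum of $i(\al,\beta)$ terms, each $\pm 1$, has the same parity as the count), so $i_2(\al,\beta)=0$ for every triangular $\beta$. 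Specializing: varying $\bt'$ freely with $\bn'$ triangular and all positive forces each $n_i$ to be even; then varying $\bn'$ triangular with $\bt'=0$ (via Lemma \ref{r.even}) forces $\bt\pmod 2$ to lie in the $\BZ_2$-span of the vectors $\di+\dj+\dk$ indexed by pants triples $(i,j,k)$. I would then combine these two parity constraints with the pants relation $[P_i]+[P_j]+[P_k]=0$ in $H_1(\bar F,\BZ_2)$ (coming from the pants surface bounding in $\bar F$) and the fact that Dehn twists about $P_j$ preserve $[\al]\pmod 2$ when $n_j$ is even, to conclude $[\al]=0$ in $H_1(\bar F,\BZ_2)$ and hence $\al\in\sS^\ev$.

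The main obstacle is the last step: passing from the coordinate-wise parity constraints on $(\bn,\bt)$ to the homological conclusion $[\al]=0$. Triangular simple closed curves for a fixed $(\cP,\cD)$ do not in general generate $H_1(\bar F,\BZ_2)$—their $\bn$-vectors reduce mod $2$ to the proper subspace $V\subset\BZ_2^{3g-3}$ cut out by pants-triple parities—so one cannot simply dualize $i_2(\al,\beta)=0$ from triangular $\beta$ to all $\beta$. Instead, the argument must reconstruct $[\al]\pmod 2$ piece by piece from the arc structure of $\al$ in each shrunken pair of pants and annulus, using the derived parity conditions on $\bn$ and $\bt$ together with the pants relations to cancel all contributions.
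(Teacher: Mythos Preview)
Your triangularity argument is correct and parallels the paper's brief remark at the end of its proof.

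For evenness (the case $n\equiv 0\pmod 4$) you have essentially the right computation but talk yourself out of the conclusion. Your identity $E(\al,\beta)\equiv i(\al,\beta)\pmod 2$, together with $\al^m\in Z(\Bt)$, already gives $i_2(\al,\beta)=0$ for every triangular $\beta$. The paper reaches the same point more directly via Equation~\eqref{Cha} of Theorem~\ref{imageofCH}: if $\beta$ is triangular with $i_2(\al,\beta)=1$, then $Ch(\al)\,\beta=-\beta\,Ch(\al)$ in $\KF$; since $\al^m=\ld(Ch(\al))$ and $\cB$ is a domain, applying $\ld$ yields $\al^m\beta=-\beta\al^m$ in $\Bt$, contradicting $\al^m\in Z(\Bt)$.

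Either way, the only remaining input is that triangular simple diagrams span $H_1(F;\BZ_2)$, and this is precisely where your reasoning goes astray. You infer that they cannot span because their $\bn$-vectors mod~$2$ land in a proper subspace of $\BZ_2^{3g-3}$. But the $\BZ_2$-homology class of $S(\bn,\bt)$ is not a function of $\bn$ alone: shifting $t_j$ by $1$ (which preserves triangularity whenever $n_j>0$) adds $[P_j]$ to the class. Starting from any triangular diagram with all $n_j>0$---for instance $\Omega$---one therefore gets every $[P_j]$ in the span; and since any $\bn\in\{0,1\}^{3g-3}$ satisfying the pants parity conditions becomes triangular after adding $(2,\dots,2)$, the resulting diagrams realize every class of $H_1(F;\BZ_2)$ modulo $\langle[P_j]\rangle$. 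Hence the span is all of $H_1(F;\BZ_2)$, which the paper simply asserts, and your attempted coordinate-by-coordinate reconstruction of $[\al]$ in the last paragraph is unnecessary.
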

\begin{proof} If $n\neq0 \pmod 4$ then $X=\sS$ and there is nothing to prove. Assume $n= 0 \pmod 4$, or $X= \sS^\ev$.

Assume to the contrary that $\al\not \in \sS^\ev$. Then there is $\beta\in \sS$ such that $i_2(\al,\beta)=1$. As  triangular simple diagrams span the homology group $H_1(F;\BZ)$, we can assume $\beta$ triangular. 
Since  both $\al,\beta$ are non-zero basis elements of $\cB$, the products $\al^m \beta, \beta \al^m$ are non-zero (by Proposition \ref{r.PS2} (a)).
By Equation \eqref{Cha} of Theorem \ref{imageofCH},
\be 
\ Ch(\al) \, \beta = - \beta\, Ch(\al).
\label{eq.k8}
\ee
Since $\al^m =\ld(Ch(\al))$, $\ld(\beta)=\beta$, and $\al^m \beta\neq 0$, from \eqref{eq.com1} we have $\al^m \beta= \ld(Ch(\al) \beta)$. Similarly $\beta \al^m = \ld(\beta Ch(\al) )$. Thus, taking $\ld$ of \eqref{eq.k8} we get
\be  
\label{eq.k9}\al^m \beta = - \beta \, \al^m.\ee
Both sides of \eqref{eq.k9} are non-zero, and \eqref{eq.k9} contradicts the fact that $\al^m$ is central in $\Bt$. We conclude that $\al \in \sS^\ev=X$. As $\al^m$ is triangular, $\al$ is also triangular. Thus, $\al\in X^\Delta$.
\end{proof}

\def\bode{\boldsymbol \delta}

\begin{prop} \label{r.c1} Suppose $(\bn,\bt)\in \IT$ and $\al=S(\bn,\bt) \in Z(\Bt)$.
Then $\al=\beta^m$ for some $\beta\in X^\Delta$.

\end{prop}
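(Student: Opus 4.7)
The plan is to deduce from the centrality of $\al = S(\bn, \bt)$ in $\Bt$ that $m$ divides every $n_i$ and every $t_i$, then define $\beta := S(\bn/m, \bt/m)$ and verify it lies in $\IT$, then compute $\beta^m$ directly via the product formula in $\cB$ and check that it equals $\al$ on the nose, and finally invoke Lemma \ref{r.even7} to conclude $\beta \in X^\Delta$.

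For the divisibility step I will use the commutation formula \eqref{eq.com1v} of Proposition \ref{r.PS2}: centrality of $\al$ in $\Bt$ is equivalent to
\[
-\bn \cdot \bq(\bn') + \bt \cdot \bn' - \bn \cdot \bt' \equiv 0 \pmod{m'} \qquad \forall (\bn',\bt')\in \IT,
\]
where $m' = \ord(\zeta^2)$. This congruence is $\BZ$-linear in $(\bn', \bt')$, so it extends to the $\BZ$-span of $\IT$ in $\BZ^{3g-3} \oplus \BZ^{3g-3}$. Fixing any triangular $\bn'$ whose entries are all strictly positive (e.g.\ $\bn' = (2,2,\dots,2)$) lets me vary $\bt'$ freely over $\BZ^{3g-3}$, yielding $m' \mid n_i$ for every $i$. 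Then taking $\bt'=0$ and $\bn'$ in the $\BZ$-span of triangular tuples (which contains $2\di$ by Lemma \ref{r.even}), the first summand vanishes modulo $m'$ because $m' \mid n_j$, which gives $2 t_i \equiv 0 \pmod{m'}$. A short case check on $n \bmod 4$ then upgrades both congruences to $m \mid n_i$ and $m \mid t_i$ in every case.

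Now set $\bn_0 := \bn/m$ and $\bt_0 := \bt/m$. The pair $(\bn_0, \bt_0)$ lies in $\IT$: the triangle inequalities are scale-invariant, the sign condition $t_{0,i} \geq 0$ when $n_{0,i}=0$ is inherited from $(\bn,\bt) \in \IT$, and the parity condition on pair-of-pants triples is either automatic (when $m$ is odd) or follows from the sharper divisibility $m'=2m \mid n_i$ available in the case $n\equiv 0 \pmod 4$, which forces each $n_{0,i}$ to be even. Iterating \eqref{eq.com1u} with $\beta := S(\bn_0, \bt_0)$ then gives
\[
\beta^m = \zeta^{-\binom{m}{2}\,\bn_0 \cdot \bq(\bn_0)}\, S(m\bn_0, m\bt_0),
\]
and the exponent vanishes because the matrix $\sigma$ is antisymmetric, so $\bn_0 \cdot \bq(\bn_0) = \tfrac12 \sum_{a,b}\sigma_{ab}\,n_{0,a}\,n_{0,b} = 0$. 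Hence $\beta^m = S(\bn, \bt) = \al$ exactly, and Lemma \ref{r.even7} places $\beta$ in $X^\Delta$. The main obstacle I anticipate is the arithmetic bookkeeping between $m$ and $m'$ in the three cases of $n \bmod 4$, together with verifying that the parity in the definition of a triangular tuple survives division by $m$ when $m$ is even; the decisive conceptual ingredient is the antisymmetry identity, which is what turns the conclusion of Proposition \ref{r.c1} into an honest equality rather than merely a proportionality.
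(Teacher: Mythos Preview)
Your proof is correct and follows essentially the same route as the paper's: you extract $m' \mid n_i$ and $m \mid t_i$ from the commutation relation \eqref{eq.com1v} (your choice of a strictly positive triangular $\bn'$ to free up $\bt'$ is a cosmetic variant of the paper's shift $\bt'' = km'(1,\dots,1)+\boldsymbol{\delta}_i$), verify $(\bn/m,\bt/m)\in \IT$ with the same parity case split, and finish with Lemma~\ref{r.even7}. Your explicit computation that $\beta^m = S(\bn,\bt)$ with trivial scalar, via the antisymmetry $\bn_0\cdot\bq(\bn_0)=0$, actually supplies a detail the paper leaves implicit.
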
 

\begin{proof}
Suppose $S(\bn,\bt) \in Z(\cB)$ and  $(\bn',\bt')\in \IT$. Recall that $m'=ord(\zeta^2)$ and define $f(\bn', \bt') $ to be half of the exponent of $\zeta$ in   Equation \eqref{eq.com1v},
\be
f(\bn', \bt')  =  -\bn \cdot \bq' + \bt\cdot \bn' - \bn \cdot \bt' \in m' \BZ.
\label{eq.new2}
\ee
 Note that if $(\bn', \bt')\in \IT$, then
$(\bn', \bt' + \bt'')\in \IT$ for all $\bt'' \in (\BZ_{>0})^{3g-3}$ whose entries are big enough (so that  $\bt' + \bt''$ has positive entries). Choose $\bt'' = km'(1,1,\dots,1) + \bode_i$ with $k$ big.
From $f(\bn', \bt') - f(\bn', \bt'+ \bt'') \in m' \BZ$ we see that $n_i \in m' \BZ$. Consequently $\bn \cdot \bq', \bn \cdot \bt' \in m'\BZ$. 
From \eqref{eq.new2}, we conclude that for all triangular $\bn'\in \BN^{3g-3}$,
\be  \label{eq.new3}
\bt\cdot \bn' \in m'\BZ.
\ee
Since $2 \bode_i$ is a $\BZ$-linear combination of some triangular  $\bn'\in\BN^{3g-3}$ (by Lemma \ref{r.even}), we also have $\bt\cdot \bode_i \in m'\BZ$. It follows that $2t_i \in m' \BZ$, or equivalently $t_i \in m\BZ$.

Hence $(\bn/m, \bt/m)$ has integer components.
It remains to show that for every $(i, j, k)$ that bounds a
pair of pants, $\frac{n_i}{m}+ \frac{n_j}{m} + \frac{n_k}{m}$ is even. In fact, if
$n \neq 0 \pmod 4$ then $m$ is odd, and since
$n_i + n_j + n_k$ is even, the sum $\frac{n_i}{m}+ \frac{n_j}{m} + \frac{n_k}{m}$ is even.
If $n = 0 \pmod 4$ then $m'=2m$, and
$\frac{n_i}{m}+ \frac{n_j}{m} + \frac{n_k}{m}\in \frac{m'}{m}\BZ = 2 \BZ$.

It follows that  $(\bn/m, \bt/m)$   is in $\IT$. Consequently, $\beta= S(\bn/m, \bt/m)$ is triangular, and $S(\bn,
\bt)= \beta^m$. By Lemma \ref{r.even7}, we have $\beta \in X$. Hence $\beta\in X^\Delta$.
\end{proof}

We are ready to complete the proof of Theorem \ref{thm.center0} for the case of the closed surface.
\begin{proof}
 Suppose $z$ is a non-zero central element of $\KF$, with standard presentation $z = \sum_{i\in I} c_i \al_i$  in the basis $\sS$.  By Theorem \ref{twistedpants} we can choose a pants decomposition $\cP$ of $F$, and an embedding of the dual graph $\cD$ such that each $\al_i$ is triangular. Let $\CXz$ be the $\BC$-linear subspace of $\KF$ spanned by the associated $X^\Delta$.
 Then $z\in \CXz$. We will prove by induction that if $z\in \CXz$ and is central, then $z\in Ch(\CX)$.
 Since  $\ld(z) \in \Bt$, its standard presentation is of the form 
 \be \label{eq.j1}
 \ld(z) = \sum_{j\in J} c_j S(\bn_j, \bt_j), \quad (\bn_j,\bt_j) \in \IT, \ \text{all } |\bn_j| \ \text{are equal}.
 \ee
 Since $\ld(z)\in Z(\cB)$ by Lemma \ref{r.sub} and  $\ld(z)\in \Bt$, we have $\ld(z) \in Z(\cB) \cap \Bt \subset Z(\Bt)$.
 
 For $(\bn,\bt)\in \IT$ let $\Bt_{(\bn,\bt)}$ be the $\BC$-subspace spanned by $S(\bn,\bt)$.  Then 
$$\Bt = \bigoplus_{(\bn,\bt)\in \IT} \Bt_{(\bn,\bt)},$$
 and \eqref{eq.com1u} shows that this a grading compatible with the algebra structure of $\Bt$. It then follows from Lemma \ref{r.decomp} that $S(\bn_j, \bt_j)\in Z(\Bt)$ for each $j\in J$.  By  
  Proposition  \ref{r.c1}, there is $\beta_j\in X^\Delta$ such that $S(\bn_j,\bt_j)= (\beta_j)^m$.

Note that if $\beta\in X^\Delta$, then any simple diagram obtained from $\beta$ by replacing a component of $\beta$ by several of its parallels is also in $X^\Delta$. As $Ch$ is defined by applying the polynomial $T_m$ to each component, we see that $Ch(\beta_j)\in \CXz$. Besides, $\ld(Ch(\beta_j))= (\beta_j)^m$,  because $T_m$ is a monic polynomial of degree $m$.

Let $z'= \sum_{j\in J} c_j Ch(\beta_j)$. Then $z'$ is central because it is an element in  $Ch(\CX)$. Besides, $z'\in \CXz$. Then $z-z'$ is also central, lying in $\CXz$, and having the grading less than that of $z$.  By induction, we conclude that $z\in Ch(\CXe) \subset Ch(\CX)$. This completes the proof of the theorem.
\end{proof}

\section{Finiteness}\label{finite}

In this section we prove that the skein algebra at a root of 1 is finitely generated as a module over its center and describe the algebraic set $\MS(\ZF)$. Throughout $F$ is a finite type surface and $\zeta$ is a root of 1 of order $n$, with $m=\ord(\zeta^4)$ and $\ep=\zeta^{m^2}$.
\subsection{Finite generation over center}
\begin{theorem} \label{finiteness} If $F$ is a finite type surface and $\zeta$  a root of unity then
$K_{\zeta}(F)$ is finitely generated as a module over its center. The number of generators is less than or equal to $(2m)^s$, where $m=\ord(\zeta^4)$ and  $s$ is the number appearing in Theorem \ref{spanning}. \end{theorem}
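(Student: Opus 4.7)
The plan is to combine the Bonahon--Wong threading map (Theorem \ref{imageofCH}) with the spanning theorem of Abdiel--Frohman (Theorem \ref{spanning}) and the product-to-sum identity \eqref{prodsum} to rewrite every spanning monomial as a central coefficient times a bounded-degree monomial in the fixed generators $J_1,\dots,J_s$.

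First I would verify that, for each of the simple closed curves $J_i$ provided by Theorem \ref{spanning}, the skein $T_{2m}(J_i)$ is central in $\KF$, regardless of the parity of $n$ modulo $4$. If $n\not\equiv 0\pmod 4$, then $Ch(J_i)=T_m(J_i)$ is already central by Theorem \ref{imageofCH}(b), and centrality of $T_{2m}(J_i)=T_m(J_i)^2-2$ follows from the $k=l=m$ case of \eqref{prodsum}. If $n\equiv 0\pmod 4$, then \eqref{Cha} gives only $T_m(J_i)\beta=(-1)^{i(J_i,\beta)}\beta\,T_m(J_i)$ for every simple diagram $\beta$; but squaring shows $T_m(J_i)^2$ commutes with every basis element, hence is central, and the same identity $T_{2m}(J_i)=T_m(J_i)^2-2$ finishes the job.

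Next I would show by induction on $k$ that, for each fixed $i$, every $T_k(J_i)$ lies in the $\BC[T_{2m}(J_i)]$-submodule of $\KF$ generated by $\{T_0(J_i),T_1(J_i),\dots,T_{2m-1}(J_i)\}$. The base cases $0\le k<2m$ are trivial, and for $k\ge 2m$ the product-to-sum identity rearranges to
\[
T_k(J_i)=T_{2m}(J_i)\,T_{k-2m}(J_i)-T_{|k-4m|}(J_i),
\]
with both indices on the right strictly less than $k$, so the induction closes. Now Theorem \ref{spanning} reduces the problem to expressing each monomial $T_{k_1}(J_1)\cdots T_{k_s}(J_s)$ in the desired form. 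Applying the previous step factor by factor yields an expansion whose coefficients are products of $T_{2m}(J_i)^{a_i}$, all of which are central by the first step. I can therefore pull every coefficient to the front without caring about the non-commutativity of the remaining factors, obtaining an expression as a $\ZF$-linear combination of monomials $T_{j_1}(J_1)\cdots T_{j_s}(J_s)$ with $0\le j_i<2m$.

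This exhibits $\KF$ as generated over $\ZF$ by the $(2m)^s$ elements $\{T_{j_1}(J_1)\cdots T_{j_s}(J_s)\colon 0\le j_i<2m\}$, giving the claimed bound. I do not foresee a serious obstacle: the only point that requires any care is the centrality of $T_{2m}(J_i)$ when $n\equiv 0\pmod 4$, since the Bonahon--Wong centrality theorem only applies to the even part in that regime, but squaring the sign in \eqref{Cha} disposes of this at once. Everything else is bookkeeping with the Chebyshev recursion.
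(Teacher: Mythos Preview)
Your proof is correct and essentially identical to the paper's: both use the Chebyshev product-to-sum identity together with centrality of $T_{2m}(J_i)$ to reduce every index below $2m$ by induction, the only differences being that the paper cites \cite{Le} for the centrality of $T_{2mq}(J)$ rather than deriving it from $T_{2m}=T_m^2-2$ as you do, and runs the induction on the total degree $|\bk|$ of the monomial rather than factor by factor. One tiny slip: at $k=2m$ your recursion gives $|k-4m|=2m=k$, not strictly less, but this case is immediate since $T_{2m}(J_i)=\tfrac12\,T_{2m}(J_i)\cdot T_0(J_i)$ already lies in the module.
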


\begin{proof} . 
Let $J_1,\dots,J_s$ be the simple loops  of  Theorem \ref{spanning}. For $\bk=(k_1,\dots,k_s)\in \BN^s$ let
$$ J(\bk)= T_{k_1}(J_1) \dots T_{k_s}(J_s).$$
By Theorem \ref{spanning},  the set $ \{ J(\bk) \mid \bk \in \BN^s\}$ spans $K_\zeta(F)$ over $\BC$. Let $V\subset K_\zeta(F)$ be the $\ZF$-submodule spanned by the set
 $\{ J(\bk) \mid k_i < 2m\}$, which has $(2m)^s$ elements. We  prove that $V= K_\zeta(F)$ by showing that $J(\bk)\in V$ for all $\bk\in \BN^s$ using induction on $|\bk|= k_1+ \dots + k_s$.

 For any $q\in \BN$, $\zeta^{2mq}=1$, hence $T_{2mq}(J)\in \ZF$ for any simple closed curve $J$ (see \cite[Cor 2.3]{Le}).
 Suppose $k_j\ge 2m$ for some $j$.
 Dividing by $2m$, we get $k_j= 2mq + r$. By Equation \eqref{prodsum},
 \begin{equation} \label{fiveone}  T_{k_j}(J_j) = T_{2mq}(J_j) T_{r}(J_j) - T_{2m q-r} (J_j).\end{equation}
 Substituting  the right hand side of Equation (\ref{fiveone}) for $T_{k_j}(J_j)$ in the product $J(\bk)= T_{k_1}(J_1) \dots T_{k_s}(J_s)$,  we see that $J(\bk)$ is the sum of two terms, each of which is in $V$ by the induction hypothesis. Hence $J(\bk)\in V$, completing the proof. \end{proof}

\subsection{Variety of classical shadows} Since $\KF$ is finitely generated as a module over $\ZF$ and is affine over $\BC$, the Artin-Tate lemma (Lemma \ref{artintate}) shows that $\ZF$ is affine over $\BC$. Denote $\MS(\ZF)$ by $\cY_\zeta(F)$. Since $\ZF$ is an integral domain, $\cY_\zeta(F)$ is an algebraic variety. 
We will call it the {\bf variety of classical shadows} of $\KF$. Its coordinate ring $\BC[\cY_\zeta(F)]$ is $\ZF$.

A regular map $f: Y \to Y'$ between two affine algebraic varieties is {\bf finite} if the dual map $f^*: \BC[Y'] \to \BC[Y]$ is an embedding and $\BC[Y]$ is integral over $f^*(\BC[Y'])$. In this case, the degree of  $f$ is the rank of $\BC[Y]$  over $f^*(\BC[Y'])$. Any finite regular map is surjective, and the preimage of a point consists of no more than $d$ points, where $d$ is the degree. 
 
 Suppose $G$ is a finite group acting algebraically on an affine variety $Y$, i.e.  for every $g\in G$, the map $Y\to Y$ given by $y \to g\cdot y$ is a regular map. Then the quotient set $Y/G$ is naturally  an affine variety, and the quotient map $Y \to Y/G$ is a finite regular map of degree $\le |G|$, see \cite[Example 1, Section 5.3]{Sha}.
 
 For any element $\al\in \pi_1(F)$ there is a complex valued function on the set of representations 
$\rho: \pi_1(F) \to SL_2(\BC)$ which sends 
 $\rho$ to  the trace of the matrix $\rho(\al)$. 
The set $\cX(F)$ of all such trace functions on the $SL_2(\BC)$-representations of $\pi_1(F)$  is an affine variety. It is called the $SL_2(\BC)$-character variety of $F$ and its  dimension is
given by 

\be \label{eq.dim}
\dim \cX(F)= \begin{cases} 0 \quad &\text{if  } F= S^2 \\
2 &\text{if  } F= T^2 \\
 6g- 6+3p  &\text{if $S$ has $p\ge 0$ punctures},
\end{cases}
\ee
(see e.g.  \cite[Proposition 49]{Sikora}).

 The cohomology group $H^1(F;\BZ_2)$ is identified with  the set of all group homomorphisms $u: \pi_1(F) \to \{ \pm 1\}$. 
If $\rho: \pi_1(F) \to SL_2(\BC)$ is a representation and $u\in H^1(F;\BZ_2)$, then let  $u*\rho$ be the new representation defined by $(u*\rho)(\al)= u(\al) \rho(\al)$ for all  $\al\in \pi_1(F)$.
 The map $\rho \to u *\rho$ 
 descends to an algebraic action of $H^1(F;\BZ_2)$ on the $SL_2(\BC)$-character variety $\cX(F)$.
  Let $\cX'(F)$ be the quotient variety $\cX(F) /H^1(F;\BZ_2)$. 
When $F$ has at least one puncture, $\cX'(F)$ is the $PSL_2(\BC)$-character variety of $F$, and when $F$ is a closed surface,  $\cX'(F)$ is the connected component of the $PSL_2(\BC)$-characters containing the character of the trivial representation, see \cite{HP}.

Recall that $F= \bF\setminus \cV$, where $\bF$ is a closed surface and $\cV$ is a finite set. Via restriction, the cohomology group $H^1(\bF;\BZ_2)$ can be considered as a subgroup of $H^1(F;\BZ_2)$ and hence acts on $\cX(F)$. Let $\cX_0(F)$ be the quotient variety $\cX(F) /H^1(\bF;\BZ_2)$. We have the  following finite regular maps
\be 
\cX(F) \onto \cX_0 (F) \onto \cX'(F),
\ee
where the first one has degree $2^{b_1(\bF)}$ and the second one has degree $2^{b_1(F) - b_1(\bF)}$. The three varieties $\cX(F), \cX_0(F), \cX'(F)$ have the same dimension given by \eqref{eq.dim}.
When $\cV$ is empty or consists of 1 point, then $b_1(F) = b_1(\bF)$, and $\cX_0(F)= \cX'(F)$.

\begin{theorem} \label{zfaffine} Suppose $F$ is a finite type surface with $p$ punctures,
$\zeta$ is a root of 1 of order $n$, and let $m =\ord(\zeta^4)$.

The variety of classical shadows $\cY_\zeta(F)$ has dimension equal to that of the $SL_2(\BC)$-character variety $\cX(F)$, which is given by \eqref{eq.dim}.
More precisely, we have the following:

(i) If $n\neq 0 \mod 4$ then there is a finite regular map $f: \cY_\zeta(F) \onto \cX(F)$ of degree $\le m^p$. In particular, if $p=0$ (i.e. $F$ is closed), then $\cY_\zeta(F)$ is isomorphic to $\cX(F)$ and hence does not depend on $\zeta$.

(ii) If $n= 0 \pmod 4$  then there is a finite regular map $f: \cY_\zeta(Y) \onto \cX_0(F)$ of degree $\le m^p$. In particular, if $p=0$ (i.e. $F$ is closed), then $\cY_\zeta(F)$ is isomorphic to $\cX_0(F)$ and hence does not depend on $\zeta$.
\end{theorem}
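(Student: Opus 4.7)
The plan is to read off the map $f$ directly from Theorem \ref{thm.center0} and then show it is a finite surjection. Recall that theorem identifies
\[
\ZF = \begin{cases} Ch(\KeF)[\partial_1,\dots,\partial_p] & \text{if } n \not\equiv 0 \pmod 4, \\ Ch(K^\ev_\epsilon(F))[\partial_1,\dots,\partial_p] & \text{if } n \equiv 0 \pmod 4, \end{cases}
\]
where $\partial_1,\dots,\partial_p$ are the peripheral skeins (each is even, since $i_2(\partial_i,\beta)=0$ for every simple loop $\beta$). I will construct an embedding of coordinate rings into $\ZF$, show it makes $\ZF$ module-finite of rank at most $m^p$, and then invoke standard facts about finite morphisms to finish.

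\textbf{Identifying the small subring.} In case (i), the hypothesis $n \not\equiv 0 \pmod 4$ forces $\epsilon \in \{\pm 1\}$, so Proposition \ref{r.sign} combined with the classical identification $K_{-1}(F) \cong \BC[\cX(F)]$ yields a canonical isomorphism $K_\epsilon(F) \cong \BC[\cX(F)]$. Composing with the injection $Ch$ from Theorem \ref{imageofCH} produces $f^* : \BC[\cX(F)] \hookrightarrow \ZF$. In case (ii), Theorem \ref{thm.Keven} gives a canonical isomorphism $K^\ev_\epsilon(F) \cong K^\ev_{-1}(F)$, and under $K_{-1}(F) \cong \BC[\cX(F)]$ the subalgebra $K^\ev_{-1}(F)$ corresponds to the subring of $H^1(\bF;\BZ_2)$-invariants, which by definition of the quotient is $\BC[\cX_0(F)]$. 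Composing with $Ch$ gives $f^* : \BC[\cX_0(F)] \hookrightarrow \ZF$.

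\textbf{Finiteness and consequences.} Each peripheral generator $\partial_i$ satisfies the monic polynomial identity $T_m(\partial_i) = Ch(\partial_i)$, and the right-hand side lies in the image of $f^*$. Hence each $\partial_i$ is integral of degree at most $m$ over $\mathrm{Im}(f^*)$. Since $\ZF$ is generated over $\mathrm{Im}(f^*)$ by the $p$ commuting integral elements $\partial_1,\dots,\partial_p$, it is module-finite of rank at most $m^p$, so $f$ is a finite morphism of degree $\le m^p$. Injectivity of $f^*$ gives surjectivity of $f$, and finiteness forces $\dim \cY_\zeta(F) = \dim \cX(F) = \dim \cX_0(F)$, the value recorded by \eqref{eq.dim}. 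When $p=0$ the peripheral factors disappear and $f^*$ is itself an isomorphism, so $f$ is an isomorphism of varieties.

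\textbf{Main obstacle.} The most delicate step is the identification in case (ii) of $K^\ev_{-1}(F)$ with $\BC[\cX_0(F)]$: one must check that the algebraic definition of an even skein (the condition $i_2(\alpha,\beta)=0$ for every simple $\beta$) coincides with invariance of the corresponding trace-function product under the $H^1(\bF;\BZ_2)$-action (which multiplies a simple loop $\alpha$ by the value $u([\alpha])$). The matching goes through the identification $\bH_1(F;\BZ_2) \cong H_1(\bF;\BZ_2)$ already set up earlier in the paper, together with the non-degeneracy of the mod-$2$ intersection pairing on the closed surface $\bF$, which converts ``$i_2(\alpha,\cdot)\equiv 0$'' into ``$[\alpha]=0$ in $H_1(\bF;\BZ_2)$''.
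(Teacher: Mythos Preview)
Your argument is correct and follows essentially the same route as the paper: identify the center via Theorem~\ref{thm.center0}, embed the coordinate ring of $\cX(F)$ (resp.\ $\cX_0(F)$) as the image of $Ch$, and use that each peripheral skein satisfies the monic relation $T_m(\partial_i)=Ch(\partial_i)$ to bound the module rank by $m^p$; the identification $K^\ev_{-1}(F)=\BC[\cX(F)]^{H^1(\bar F;\BZ_2)}$ in case~(ii) is exactly the content of the paper's auxiliary lemma. One small correction: in case~(i) Proposition~\ref{r.sign} only concerns the \emph{even} subalgebras, so it does not give $K_1(F)\cong K_{-1}(F)$; for $\epsilon=1$ you should instead invoke Barrett's spin-structure isomorphism (Section~\ref{sec.even}), which is non-canonical but suffices since the theorem only asserts existence of a finite map.
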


\proof

Let $\ep=\zeta^{m^2}$ and $\partial_1, \dots,\partial _p$ be the peripheral skeins.

(i) Since $n \neq 0 \pmod 4$, we have $\ep=\pm1$. By Theorem \ref{thm.center0}, one has $\ZF= Ch(\KeF)[\partial]$. 
Recall that $Ch(\KeF)$ is the algebra generated by $T_m(\al)$, for all simple loops $\al$. 
Since $T_m$ is a polynomial of degree $m$, the set ${\boldsymbol{\partial}}=\{ \ \partial_1^{k_1} \dots \partial _p^{k_p} \mid k_i   \le m-1\}$ spans $\ZF$ over $Ch(\KeF)$. Note that $|{\boldsymbol{\partial}}|= m^p$. 
The embedding $\KeF\cong Ch(\KeF) \embed Ch(\KeF)[\partial]$ gives rise to a finite regular map $f: \cY_\zeta(F) \onto \MS(\KeF)$ of degree $\le m^p$. 
Since $\ep =\pm 1$, we have $\MS(\KeF)\cong \cX(F)$, see \cite{B,PS}. This proves (i).

(ii) Suppose  $n= 0 \pmod 4$. We have $\ZF= Ch(K^\ev_\ep(F))[\partial]$. Since each 
peripheral element is even, we have $T_m(\partial_j)\in Ch(K^\ev_\ep(F))$ for each $\partial _j$. It follows that as a module over  $Ch(K^\ev_\ep(F))$, $\ZF$ can be generated by $\le m^p$ elements. Hence we have a finite regular map
$$ f: \cY_\zeta(F) \onto \MS(K^\ev_\ep(F)).$$
Statement (ii) follows from the following lemma.
\begin{lemma}  The variety $\MS(K^\ev_{\ep}(F))$ is canonically isomorphic to $\cX_0(F)$.
\end{lemma}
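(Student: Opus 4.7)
The plan is to chain three canonical isomorphisms: first reduce to $\zeta=-1$ via the even skein algebra, then use the classical identification with the character variety, and finally identify the even subalgebra as the invariant subring under the finite group action defining $\cX_0(F)$.

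First I would apply Theorem \ref{thm.Keven}, which gives a canonical algebra isomorphism $K^\ev_\ep(F) \cong K^\ev_{-1}(F)$ for any $\ep \in \{\pm 1, \pm \mathbf{i}\}$. Since the hypothesis $n \equiv 0 \pmod 4$ ensures $\ep$ lies in this set, it is enough to produce a canonical isomorphism $\MS(K^\ev_{-1}(F)) \cong \cX_0(F)$.

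Next I would recall the canonical algebra isomorphism $K_{-1}(F) \cong \BC[\cX(F)]$ from \cite{B,PS}, under which a simple loop $\gamma$ corresponds (up to sign) to the trace function $\rho \mapsto -\mathrm{tr}(\rho(\gamma))$, and a simple diagram $\al = \al_1\sqcup\cdots\sqcup\al_k$ corresponds to the product of the trace functions of its components. Under this identification, the key computation is that the action of $u \in H^1(\bF;\BZ_2) \subset H^1(F;\BZ_2)$ on $\cX(F)$ by $\rho \mapsto u*\rho$ pulls back on a simple diagram $\al$ to multiplication by $\prod_j u([\al_j]) = u([\al]_{\bF})$, where $[\al]_{\bF}$ denotes the class of $\al$ in $H_1(\bF;\BZ_2)$. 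Each simple diagram is thus an eigenvector of the $H^1(\bF;\BZ_2)$-action with eigenvalue $\pm 1$, and the invariant subalgebra $K_{-1}(F)^{H^1(\bF;\BZ_2)}$ is spanned by those simple diagrams $\al$ with $u([\al]_{\bF}) = 1$ for every $u$, i.e.\ with $[\al]_{\bF} = 0$ in $H_1(\bF;\BZ_2)$.

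The step I expect to require the most care is matching this homological condition with the definition of evenness, namely $i_2(\al,\beta) = 0$ for every simple loop $\beta$. This uses the fact that simple loops span $H_1(F;\BZ_2)$, that the mod-$2$ intersection form descends to $H_1(\bF;\BZ_2)$ with peripheral loops in its kernel, and that this descended form is nondegenerate by Poincar\'e duality on the closed surface $\bF$. Hence $\al$ is even precisely when $[\al]_{\bF}=0$, so the invariant subalgebra coincides with $K^\ev_{-1}(F)$.

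Combining these steps yields
\[ K^\ev_\ep(F) \ \cong\  K^\ev_{-1}(F) \ =\  K_{-1}(F)^{H^1(\bF;\BZ_2)} \ \cong\  \BC[\cX(F)]^{H^1(\bF;\BZ_2)} \ =\  \BC[\cX_0(F)], \]
where the last equality is the definition of the quotient variety $\cX_0(F) = \cX(F)/H^1(\bF;\BZ_2)$. Applying $\MS$ to both sides and using that $\cX_0(F)$ is an affine variety whose coordinate ring is exactly the ring of invariants gives the canonical isomorphism $\MS(K^\ev_\ep(F)) \cong \cX_0(F)$, completing the proof.
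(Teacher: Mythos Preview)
Your proof is correct and follows essentially the same route as the paper: reduce to $\ep=-1$ via Theorem~\ref{thm.Keven}, identify $K_{-1}(F)$ with $\BC[\cX(F)]$, and show that the $H^1(\bF;\BZ_2)$-invariants are exactly $K^\ev_{-1}(F)$. You spell out the equivalence ``$[\al]_{\bF}=0$ in $H_1(\bF;\BZ_2)$ iff $\al$ is even'' via nondegeneracy of the mod-$2$ intersection form on $\bF$, a step the paper asserts without justification; one minor remark is that $\ep\in\{\pm1,\pm\mathbf{i}\}$ holds for every root of unity $\zeta$ (see \eqref{eq.4cases}), not only when $n\equiv 0\pmod 4$.
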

\begin{proof} Since $\ep\in \{\pm 1,\pm \bf i\}$, by  Theorem \ref{thm.Keven}, $K^\ev_\ep(F)$ is canonically isomorphic to $K^\ev_{-1}(F)$.

By the definition of finite quotient,  $\cX_0(F)= \MS(K_{-1}(F)^{H^1(\bF;\BZ_2)})$, where $K_{-1}(F)^{H^1(\bF;\BZ_2)}$ is the set of elements in $K_{-1}(F)$ fixed by $H^1(\bF;\BZ_2)$.
 
 Suppose $u: \pi_1(\bF) \to \{\pm 1\}$ is an element of $H^1(\bF;\BZ_2)$ and $0\neq\al \in K_{-1}(F)$. Using the basis $\sS$, we have  $\al=\sum_{j\in J} c_j \al_j$ where $J$ 
 is a non-empty finite set, $0\neq c_j \in \BC$, and $\al_j\in \sS$ are distinct. 
 We have $u * \al= \sum _{j\in J} c_j u(\al_j) \al_j$, and  identity $u*\al= \al$ means that
 $u(\al_j)=1$ for all $j$. Thus, $u*\al=\al$ for all $u\in H^1(\bF;\BZ_2)$ if and only if $u(\al_j)=1$ for all $u\in H^1(\bF;\BZ_2)$, which is equivalent to $\al_j$ being even. Hence, $K^\ev_{-1}(F)^{H^1(\bF;\BZ_2)} = K^\ev_{-1}(F)$, and $\cX_0(F)= \MS(K^\ev_{-1}(F))$. This completes the proof of the lemma, and the theorem.
 \end{proof}

 \begin{remark} Using the basis $\sS$, one can show that degrees in both (i) and (ii) of Theorem \ref{zfaffine} are exactly $m^p$. We will address this in a future work.
 \end{remark}
 
 \begin{remark} When $\ep=-1$, there is a canonical isomorphism between the $SL_2(\BC)$-character variety $\cX(F)$ and $\MS(K_{-1}(F))$, see \cite{B,PS}. Although $K_{-1}(F)$ and $K_{1}(F)$ are isomorphic as algebras, see \cite{Ba}, the isomorphism is not canonical. However, there is twisted version $\cX^{\mathrm{twist}}(F)$ of $\cX(F)$, which is canonically isomorphic to $\MS(K_1(F))$, see \cite{BW0,Thurston}. Therefore when $\ep=1$ we can replace $\cX(F)$ in Theorem \ref{zfaffine} by $\cX^{\mathrm{twist}}(F)$, and the finite morphism $f$ is canonical.
  \end{remark}

\section{Irreducible representations of the skein algebra and shadows}\label{irreps}

In this section we connect to the work of Bonahon and Wong \cite{BW2} by showing how central characters correspond to  classical shadows.

An irreducible representation $\rho: K_{\zeta}(F)\rightarrow M_d(\mathbb{C})$ defines a central character $\chi_\rho: \ZF \to \BC$.
 If $n =2 \pmod 4$ then $\ep=-1$, and 
 $\ZF=
Ch(K_{-1}(F))[\partial]$.
The homomorphism $\chi_{\rho}$ is determined by the pullback to $K_{-1}(F))$,
\begin{equation}
\label{eq.tchi}  \tilde{\chi}_{\rho}:K_{-1}(F)\rightarrow \mathbb{C}\end{equation} and by the list of the values $\chi_{\rho}(\partial_i)$ satisfying 
\be \label{eq.punc}
T_m(\chi_{\rho}(\partial_i))= \tilde \chi_\rho(\partial_i).
\ee

 Since $K_{-1}(F)$ is canonically isomorphic to the coordinate ring of the $SL_2(\BC)$-characters $\cX(F)$, the map $\tilde{\chi}_{\rho}$ determines an $SL_2(\BC)$-character. An $SL_2(\BC)$-character and the collection of values $\chi_{\rho}(\partial_i)$ satisfying \eqref{eq.punc} are what Bonahon and Wong call a {\bf classical shadow}.

   From our computation of the center of $K_{\zeta}(F)$, Bonahon and Wong's classical shadows of irreducible representations of $K_{\zeta}(F)$ are in  canonical one-to-one correspondence with our central characters. For a closed surface there are no peripheral skeins, and the classical shadow consists of an $SL_2(\BC)$-character only.

Similarly, for the other cases they considered, namely when $n$ is odd, Bonahon and Wong's classical shadows coincide with the central characters or with the points of the variety $\cY_\zeta(F)$.

When the order  of $\zeta$ is arbitrary, we define the classical shadow of an irreducible representation $\rho$ to be the point of the variety of classical shadows 
$\cY_\zeta(F)$ corresponding to the central character of $\rho$.

We now prove Theorem \ref{THM.2} stated in the introduction, which was conjectured by Bonahon and Wong in the case when the order of $\zeta$ is $2\pmod{4}$. 

\begin{theorem} [Unicity Theorem for skein algebras]  \label{THM.2a}
Let 
$F$ be a
 finite type surface and $\zeta$  
a 
 root of unity.  There  
is a Zariski open dense subset $U$ of the variety of classical shadows $\cY_\zeta(F)$ 
 such that  each point of $U$ is the classical shadow of a unique (up to equivalence) irreducible representation of $\KF$.   All irreducible representations with classical shadows in $U$ have the same dimension $N$ which is  equal to the  square root of the rank of           
$\KF$ over $\ZF$. If a classical shadow is not in $U$, then it has at most $r$ non-equivalent irreducible representations, and each has dimension $\le N$. Here $r$ is a constant depending on the surface $F$ and the root $\zeta$.

\end{theorem}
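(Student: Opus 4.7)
The plan is to derive Theorem \ref{THM.2a} as a direct application of the general Unicity Theorem (Theorem \ref{unicity}) to the algebra $R=\KF$, once we verify that $\KF$ meets the hypotheses and once we identify the set $\HA(Z(\KF),\BC)$ with the variety of classical shadows.

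First, I would check the hypotheses of Theorem \ref{unicity} with $k=\BC$ and $R=\KF$. By the theorem of Bullock cited before Theorem \ref{spanning}, $\KF$ is affine over $\BC$. By Theorem \ref{thm.zero}, $\KF$ has no zero divisors, so in particular it is prime. By Theorem \ref{finiteness}, $\KF$ is finitely generated as a module over its center $\ZF$; let $r=(2m)^{s}$ be an explicit bound on the number of generators, which will serve as the constant $r$ in the statement of Theorem \ref{THM.2a}. Thus all the assumptions of Theorem \ref{unicity} are satisfied, and that theorem produces a nonzero $c\in \ZF$ and a Zariski open dense subset $V_c\subset \HA(\ZF,\BC)$ such that every $\tau\in V_c$ is the central character of a unique irreducible representation of dimension $N$ (where $N$ is the PI degree of $\KF$, equal to the square root of the rank of $\KF$ over $\ZF$), while any $\tau\notin V_c$ is the central character of at most $r$ inequivalent irreducible representations, each of dimension $\le N$.

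Next, I would translate this statement from $\HA(\ZF,\BC)$ to $\cY_\zeta(F)$. Since $\ZF$ is affine over $\BC$ (by the Artin--Tate lemma, Lemma \ref{artintate}) and is a domain (since $\KF$ is prime), the weak Nullstellensatz identifies $\HA(\ZF,\BC)$ with $\MS(\ZF)=\cY_\zeta(F)$ as affine algebraic sets, compatibly with their Zariski topologies. Let $U$ be the image of $V_c$ under this identification; it is Zariski open and dense in $\cY_\zeta(F)$.

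Finally, one must check that the notion of central character used in Theorem \ref{unicity} agrees with the notion of classical shadow used in the statement of Theorem \ref{THM.2a}. This is exactly the content of the discussion immediately preceding the theorem: our characterization of the center in Theorem \ref{thm.center0}, combined with the identification $\cY_\zeta(F)=\MS(\ZF)$, shows that the Bonahon--Wong classical shadow of an irreducible representation $\rho$ is in canonical bijection with the central character $\chi_\rho$. Plugging this identification into the conclusion of Theorem \ref{unicity} yields exactly the statement of Theorem \ref{THM.2a}.

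There is no genuine obstacle left at this stage; the difficulty of the result has been absorbed into the earlier sections. The most delicate bookkeeping is in the last step, namely verifying that the classical shadow really does correspond to the full central character (and not, say, only to its restriction to a proper subalgebra of $\ZF$). For $n\equiv 2\pmod 4$ this amounts to knowing that $\BC[\cX(F)]$ exhausts $\ZF$ up to peripheral skeins, and in general it amounts to knowing the exact structure of $\ZF$ provided by Theorem \ref{thm.center0}; this is why that characterization of the center was essential.
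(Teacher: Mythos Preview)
Your proposal is correct and follows essentially the same route as the paper: verify that $\KF$ is prime (via Theorem \ref{thm.zero}), affine over $\BC$ (via Bullock/Theorem \ref{spanning}), and finitely generated over its center with $r=(2m)^s$ generators (via Theorem \ref{finiteness}), then invoke Theorem \ref{unicity}. The paper's proof is more terse, leaving the identification of $\HA(\ZF,\BC)$ with $\cY_\zeta(F)$ and of central characters with classical shadows implicit from the surrounding discussion, whereas you spell these out explicitly.
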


\proof By Theorem \ref{thm.zero},  $K_{\zeta}(F)$ does not have zero divisors and hence   it is prime. By \cite{Bullock} (see also Theorem \ref{spanning}), $\KF$  is affine over $\mathbb{C}$. By Theorem  \ref{finiteness}, $\KF$ is  generated as  module over $\ZF$ by a set of no more than $r=(2m)^s$ elements, where $m = \ord(\zeta^4)$ and $s$ is the number depending on $F$ and appearing in Theorem \ref{spanning}. Hence Theorem \ref{unicity} applies. \qed

From the dimension consideration we see that $N^2 \le r$. We don't know any other relation between $N$ and $r$.

\begin{remark} In \cite{FK1} it is shown that if $F$ is a surface of negative Euler characteristic $e(F)$ having $p>0$ punctures, and the order of $\zeta$ is $2 \mod{4}$ then the  rank of $K_{\zeta}(F)$ over $\ZF$  is $m^{-3e(f)-p}= m^{6g -6+ 2p}$, where $g$ í the genus.  The proof carries over to the cases where the order of $\zeta$ is odd to give the same result. This means the dimension of a generic irreducible representation, i.e. the number $N$ in Theorem \ref{THM.2a}, is $N= m^{3g-3+p}$. It should be noted that for closed surfaces (i.e. $p=0$) Bonahon and Wong \cite{BW2}  proved the inequality $N \le 3g -3$. In a future work we calculate the number $N$ for all other cases of roots of 1 and all type of surfaces, closed or punctured.\end{remark}

\end{document}